\newcommand\reallywidehat[1]{%
\savestack{\tmpbox}{\stretchto{%
  \scaleto{%
    \scalerel*[\widthof{\ensuremath{#1}}]{\kern-.6pt\bigwedge\kern-.6pt}%
    {\rule[-\textheight/2]{1ex}{\textheight}}
  }{\textheight}%
}{0.5ex}}%
\stackon[1pt]{#1}{\tmpbox}%
}
\newcommand{\mcl}[1]{{\mathcal #1}}
\newcommand{\I}{\mathrm I}
\newcommand{\II}{\mathrm{II}}
\newcommand{\III}{\mathrm{III}}
\newcommand{\ip}[1]{\left\langle #1 \right\rangle}
\numberwithin{equation}{section}
\newcommand{\N}{\mathbb N}
\newcommand{\R}{\mathbb R}
\def\E{\mathbb E}
\newcommand{\linf}{L^{\infty}}
\newcommand{\sF}{\mathcal{F}}
\newcommand{\bP}{\mathbb{P}}
\newcommand{\sL}{\mathcal{L}}
\newcommand{\bx}{\bm{x}}
\newcommand{\bX}{\boldsymbol X}
\def\XXint#1#2#3{{\setbox0=\hbox{$#1{#2#3}{\int}$}
\vcenter{\hbox{$#2#3$}}\kern-.5\wd0}}
\newcommand{\T}{\mathbb{T}}
\numberwithin{equation}{section}
\newtheorem{thm}{Theorem}[section]
\newtheorem{lem}[thm]{Lemma}
\newtheorem{cor}[thm]{Corollary}
\newtheorem{prop}[thm]{Proposition}
\newtheorem{assumption}[thm]{Assumption}
\theoremstyle{definition}
\newtheorem{defn}[thm]{Definition}
\newtheorem{rmk}[thm]{Remark}
\def\smallnegint{\mathop{\int\mkern-13mu
        \raise.5ex\hbox{${\scriptscriptstyle\diagup}$}}\nolimits}
\def\ds{\displaystyle}
\def\div{\operatorname{div}}
\def\tr{\operatorname{tr}}
\def\bx{{\boldsymbol x}}
\def\by{{\boldsymbol y}}
\def\ssetminus{\,\raise.4ex\hbox{$\scriptstyle\setminus$}\,}
\newcommand{\be}{\begin{equation}}
\newcommand{\ee}{\end{equation}}
\newcommand{\bc}{\begin{case}}
\newcommand{\ec}{\end{cases}}
\newcommand{\bs}{\begin{split}}
\newcommand{\es}{\end{split}}
\newcommand{\norm}[1]{\left\Vert#1\right\Vert}
\newcommand{\abs}[1]{\left\vert#1\right\vert}
\newcommand{\bm}[1]{\boldsymbol #1}
\renewcommand{\bar}{\overline}
\renewcommand{\hat}{\widehat}
\def \be {\begin{equation}}
\def \ee {\end{equation}}
\def \E {\mathbb{E}}
\def \R {\mathbb{R}}
\newcommand{\pr}{\mathcal{P}}
\newcommand{\cC}{\mathcal C}
\newcommand{\bbF}{\mathbb{F}}
\newcommand{\trip}[1]{{\left\vert\kern-0.25ex\left\vert\kern-0.25ex\left\vert #1 
    \right\vert\kern-0.25ex\right\vert\kern-0.25ex\right\vert}}
\newcommand{\cP}{\mathcal{P}}
\newcommand{\id}{\text{Id}}
\newcommand{\ov}{\overline}
\newcommand{\bd}{\mathbf{d}}
\newcommand{\cA}{\mathcal{A}}
\newcommand{\cB}{\mathcal{B}}
\newcommand{\cpart}{\mathcal{C}^{1,2,2}_{\text{p}}}
\title[Well-posedness of Hamilton-Jacobi equations in the Wasserstein space]{Well-posedness of Hamilton-Jacobi equations in the Wasserstein Space: non-convex Hamiltonians and common noise}
\begin{document}

\author[S. Daudin]{Samuel Daudin 
\address{(S. Daudin) Universit\'e C\^ote d'Azur, CNRS, Laboratoire J.A. Dieudonn\'e, 06108 Nice, France
}\email{samuel.daudin@univ-cotedazur.fr
}}

\author[J. Jackson]{Joe Jackson\address{(J. Jackson) The University of Chicago, Eckhart Hall, 5734 S University Ave, Chicago, IL 60637, USA}
\email{jsjackson@uchicago.edu}}
 
\author[B. Seeger]{ Benjamin Seeger
\address{(B. Seeger) University of Texas at Austin, 2515 Speedway, PMA 8.100, Austin, TX 78712, USA
}\email{seeger@math.utexas.edu
}}

\thanks{S. Daudin acknowledges the financial support of the European Research Council (ERC) under the European Union's Horizon 2020 research and innovation program (ELISA project, Grant agreement No. 101054746). J. Jackson is supported by the NSF under Grant No. DMS2302703. B. Seeger was partially supported by the National Science Foundation (NSF) under award numbers DMS-1840314 and DMS-2307610.}

\maketitle

\begin{abstract}
    We establish the well-posedness of viscosity solutions for a class of semi-linear Hamilton-Jacobi equations set on the space of probability measures on the torus. In particular, we focus on equations with both common and idiosyncratic noise, and with Hamiltonians which are not necessarily convex in the momentum variable. Our main results show (i) existence, (ii) the comparison principle (and hence uniqueness), and (iii) the convergence of finite-dimensional approximations for such equations. Our proof strategy for the comparison principle is to first use a mix of existing techniques (especially a change of variables inspired by \cite{bayraktar2023} to deal with the common noise) to prove a ``partial comparison" result, i.e. a comparison principle which holds when either the subsolution or the supersolution is Lipschitz with respect to a certain very weak metric. Our main innovation is then to develop a strategy for removing this regularity assumption. In particular, we use some delicate estimates for a sequence of finite-dimensional PDEs to show that under certain conditions there \textit{exists} a viscosity solution which is Lipschitz with respect to the relevant weak metric. We then use this existence result together with a mollification procedure to establish a full comparison principle, i.e. a comparison principle which holds even when the subsolution under consideration is just upper semi-continuous (with respect to the weak topology) and the supersolution is just lower semi-continuous. We then apply these results to mean-field control problems with common noise and zero-sum games over the Wasserstein space.
\end{abstract}

\setcounter{tocdepth}{1}
\tableofcontents

\section{Introduction}

This paper is concerned with the Hamilton-Jacobi equation 
\begin{align} \label{hjb} \tag{\text{HJB}}
\begin{cases}
    \ds - \partial_t U - (1 + a_0) \int_{\T^d} \tr(D_x D_m U\big)(t,m,x)  m(dx) - a_0 \int_{\T^d} \int_{\T^d} \tr\big(D_{mm} U\big)(t,m,x,y) m(dx) m(dy) \vspace{.1cm} \\ \ds
     \hspace{2cm} + \int_{\T^d} H\big(x, D_m U(t,m,x), m \big) m(dx) = 0, \quad (t,m) \in [0,T] \times \cP(\T^d), \vspace{.1cm} \\
    U(T,m) = G(m), \quad m \in \cP(\T^d).
    \end{cases}
\end{align}
The data consists of two functions 
\begin{align*}
    H = H(x,p,m) : \T^d \times \R^d \times \cP(\T^d) \to \R, \qquad G = G(m) : \cP(\T^d) \to \R, 
\end{align*}
as well as the parameter $a_0 \geq 0$ which represents the intensity of the common noise. The unknown is a map $U = U(t,m) : [0,T] \times \cP(\T^d) \to \R$. 

Equations of the form \eqref{hjb} have received significant attention in recent years, in large part because of their connection to mean field control. In particular, when $H$ is convex in $p$, \eqref{hjb} is the dynamic programming equation of a corresponding mean field control problem. When $H$ is non-convex (i.e. not convex in $p$), \eqref{hjb} arises in the study of zero-sum stochastic differential games of mean field type, as proposed in \cite{cossophamjmpa}. We refer also to \cite{mourrat} for an equation similar to \eqref{hjb} which arises in the setting of spin glasses, and where the non-convexity of the Hamiltonian creates some interesting challenges.

It turns out that \eqref{hjb} may not have a classical solution even if the data is smooth. In the convex case this is related to the non-uniqueness of optimal trajectories for the control problem, and an example can be found in \cite{Briani2018}. As a consequence, it is necessary to work with weak solutions, and various notions of viscosity solutions have been proposed and studied. The comparison principle for viscosity sub/super-solutions of \eqref{hjb} is particularly subtle, and has received a huge amount of attention in recent years. We refer to \cite{GT_19, confortihj, Cosso2021, Soner2022, Soner2023, daudinseeger, bayraktar2023, shao2023, talbitouzizhang, bertucci2023} for recent efforts in this direction. These references employ various notions of viscosity solutions, and work under a variety of assumptions (convex or non-convex $H$, common noise or no common noise, degenerate or non-degenerate idiosyncratic noise, semilinear or fully nonlinear) and so we make no attempt to give a full description of the state of the art. Instead, we describe in the next subsection the approaches which are most relevant to the present paper.

In addition to the comparison principle, we are interested in the connection between \eqref{hjb} and the sequence of finite-dimensional PDEs 
\begin{align} \label{hjbn} \tag{$\text{HJB}_N$}
\begin{cases}
    \ds - \partial_t V^N - \sum_{i = 1}^N \Delta_{x^i} V^N - a_0 \sum_{i,j = 1}^N \tr\big(D_{x^ix^j} V^N \big) + \frac{1}{N} \sum_{i = 1}^N H(x^i, ND_{x^i} V^N, m_{\bx}^N) = 0, \vspace{.1cm} \\
    \hspace{8cm} (t,\bx) \in [0,T] \times (\T^d)^N, \vspace{.1cm} \\
    V^N(T,\bx) = G(m_{\bx}^N), \quad \bx \in (\T^d)^N.
    \end{cases}
\end{align}
In particular, it is expected that the viscosity solution $U$ of \eqref{hjb} arises as the limit of the solutions $V^N$ to \eqref{hjbn}, in the sense that 
\begin{align} \label{convintro}
    \lim_{N \to \infty} \sup_{(t,\bx) \in [0,T] \times (\T^d)^N} |V^N(t,\bx) - U(t,m_{\bx}^N)| = 0.
\end{align}
When $H$ is convex, \eqref{convintro} is related to the convergence problem in mean field control, and can be verified by control-theoretic compactness arguments (see \cite{Lacker2017} and \cite{djete2022mor}). In the convex setting, it is in fact possible to sharpen the asymptotic statement \eqref{convintro} with a rate of convergence, see \cite{CDJS_23,DDJ_23}. When $H$ is not convex, the convergence \eqref{convintro} is not as well studied, but has been established in the case of purely common noise in \cite{GangboMayorgaSwiech}.

As indicated in the title, the purpose of this paper is to address well-posedness for \eqref{hjb} (in particular the comparison principle) and convergence for \eqref{hjbn} in the presence of both idiosyncratic and common noise ($a_0 > 0$) and without assuming that $H$ is convex in $p$. We briefly summarize our main results here (precise assumptions and results will be state in Section \ref{sec:prelim} below):
\begin{itemize}
\item We prove a comparison principle (Theorem \ref{thm.comparison}), and hence uniqueness, for viscosity solutions of \eqref{hjb} for general (nonconvex) $H$ and under minimal growth and regularity assumptions the data and the solutions. In particular, Lipschitz regularity is measured with respect to the $\mathbf{d}_1$-metric, which generalizes results of \cite{Soner2022,bayraktar2023} (see the discussion below).
\item Relying purely on PDE arguments, we establish the convergence statement \eqref{convintro} (Theorem \ref{thm.convergence}). We thus extend the convergence results of \cite{Lacker2017, djete2022mor, GangboMayorgaSwiech} to equations with both common noise and idiosyncratic noise and with possibly non-convex Hamiltonians. In order to prove this convergence result, we verify that every ``limit point" of the sequence $(V^N)_{N \in \N}$ (given by \eqref{hjbn}) is a viscosity solution of \eqref{hjb}. Together with a compactness argument and the comparison principle, this implies that there exists a unique viscosity solution of \eqref{hjb} (Theorem \ref{thm.exist}), and it arises as the limit of the solutions of the finite-dimensional equations \eqref{hjbn}. 
\item The equation \eqref{hjb} is shown in Theorem \ref{thm.regularity} to propagate regularity. Namely, if $H$ and $G$ are sufficiently smooth, then the unique viscosity solution $U$ is Lipschitz continuous in $\cP(\T^d)$ with respect to the $C^{-k}$-metric, where $k \in \N$ depends on the smoothness of $H$ and $G$.
\item Finally, we use the equations \eqref{hjb} and \eqref{hjbn} to study mean field control and mean field zero-sum games (depending on whether or not $H$ is convex), as well as the analogous large population models. Among the new results we prove are a) the characterization of the value function from mean field control with idiosyncratic and common noise as the unique viscosity solution of \eqref{hjb}, b) the convergence \eqref{convintro} for the upper and lower value functions for $N$-player  stochastic differential games to the corresponding value functions for the mean field problems, and c) the coincidence of the upper and lower mean-field value functions under the so-called Isaacs condition. 
\end{itemize}

\subsection{Some existing approaches to comparison} 

In order to better explain the difficulties we face, we now summarize a number of existing approaches to the comparison principle, and explain why they do not seem to apply to equations with non-convex Hamiltonians and/or both common and idiosyncratic noise.
\newline
\paragraph{\textit{Exploiting the connection to mean field control:}} When $H$ is convex in $p$, the connection to mean field control makes it relatively easy to establish the existence of a solution - one checks by dynamic programming that the value function of the control problem is a viscosity solution of the equation. This connection can in fact also be helpful in establishing a comparison principle. For example, when $a_0 = 0$ and $H$ is convex in $p$, the equation \eqref{hjb} is covered by the results in \cite{Cosso2021}. While in some ways the comparison principle in \cite{Cosso2021} is remarkably general (it covers a class of fully non-linear equations), the approach in \cite{Cosso2021} relies heavily on the connection to mean field control. In particular, in this setting the convergence \eqref{convintro} has already been established by control-theoretic arguments in \cite{Lacker2017}, and the proof of comparison in \cite{Cosso2021} uses this convergence result heavily. Thus while it is possible that the techniques of \cite{Cosso2021} could be adapted to the case $a_0 > 0$, it does not seem that they can be used when the Hamiltonian $H$ is not convex in $p$.
\newline 
\paragraph{\textit{The lifting approach:}} 
For equations with no idiosyncratic noise (roughly speaking, this means that the $(1+ a_0)$ appearing in \eqref{hjb} is replaced with $a_0$), it is possible to define a notion of viscosity (sub/super)-solutions by ``lifting" the equation to a Hilbert space of square-integrable random variables as in \cite{GT_19}. An extensive survey of optimal control problems in infinite-dimensional Hilbert spaces can be found in the book \cite{FGS_book}. See also \cite{GangboMayorgaSwiech} and \cite{mayorgaswiech}, where comparison principles obtained through this lifting approach are used to show convergence. The lifting approach does not rely on the control formulation, and so works for non-convex Hamiltonians. On the other hand, there is a major obstacle to adapting this approach to the case of non-zero idiosyncratic noise - roughly speaking, the ``common noise operator" 
\begin{align} \label{commonnoiseop}
    a_0 \Big( \int_{\T^d} \tr(D_x D_m U)(t,m,x) m(dx) + \int_{\T^d} \int_{\T^d} \tr(D_{mm} U)(t,m,x,y)m(dx)m(dy) \Big)
\end{align}
appearing in \eqref{hjb} ``lifts" nicely to the Hilbert space, but the ``idiosyncratic noise operator"
\begin{align} \label{idionoiseop}
    \int_{\T^d} \tr(D_x D_m U)(t,m,x) m(dx)
\end{align}
results in a singular second-order operator. For a short, self-contained explanation of this difficulty, see the introduction of \cite{daudinseeger}. Thus the lifting approach does not seem immediately applicable to the equation \eqref{hjb}.
\newline 
\paragraph{\textit{Doubling of variables with entropy penalization:}} One of the basic roadblocks when trying to prove the comparison principle for \eqref{hjb} is that many of the natural metrics on $\cP(\T^d)$ (for example the 1-Wasserstein metric $\bd_1$ or the 2-Wasserstein metric $\bd_2$) do not have good regularity properties, and in particular maps like $m \mapsto \bd_2^2(m,m_0)$ for fixed $m_0 \in \cP(\T^d)$ are not typically smooth. This means that these metrics cannot be used naively as a penalization in a standard doubling of variables argument. The recent work \cite{daudinseeger} of the first and last named authors circumvents this issue by adding an extra ``entropy penalization" in the doubling of variables argument, to force the maximum in the relevant optimization problem to occur at a place where the squared 2-Wasserstein distance is in fact differentiable in an appropriate sense; see also works of Feng et al. where similar techniques were used in a slightly different context \cite{Feng_Kats_2009, Feng_Mikami_Zimmer_2021, Feng_Swiech_2013, Feng2012}. This approach ultimately led to a comparison principle for an equation of the form \eqref{hjb} with nonconvex $H$ but with no common noise ($a_0 = 0$), but it seems that there are serious challenges to adapting the approach to the case $a_0 > 0$. Moreover, we mention that while the comparison principle in \cite{daudinseeger} is established without convexity of $H$, the notion of viscosity solution employed there is more strict than the one we use here, and this makes existence more challenging. In particular, existence is established in \cite{daudinseeger} only when $H$ is convex in $p$, by appealing to the control formulation.
\newline 
\paragraph{\textit{Doubling of variables with weaker metrics:}} Another way to get around the regularity issues in the standard doubling of variables argument is to simply work with a metric $\rho$ whose square is sufficiently smooth. Heuristically, it seems that \textit{weaker} metrics on $\cP(\T^d)$ have better regularity properties, and in particular \cite{Soner2022} showed that a comparison result can be obtained by doubling variables and penalizing with the metric inherited from a ``sufficiently negative" Sobolev space. This idea was later combined with a clever change of variables in \cite{bayraktar2023} to cover the case $a_0 > 0$. In particular, \cite{bayraktar2023} took advantage of the fact that the ``common noise operator" appearing in \eqref{hjb} can be re-cast as a finite-dimensional Laplacian. This allowed \cite{bayraktar2023} to obtain the first comparison principle for an equation of the form \eqref{hjb} (in fact, a much more general version of \eqref{hjb}) with both common and idiosyncratic noise. Their strategy was to double variables and penalize with a weak metric, as in \cite{Soner2022}, and then handle the additional challenge of the common noise operator with a finite-dimensional Ishii's lemma. 

The drawback of working with weaker metrics is that, in order to execute the doubling of variables argument, it seems necessary to assume that either the subsolution or the supersolution under consideration is uniformly Lipschitz with respect to this weak metric (see the hypotheses in Theorem 4.1 of \cite{Soner2022} and Theorem 3.2 of \cite{bayraktar2023}). In general it is not a priori clear when there exists a solution satisfying this Lipschitz regularity, which makes these comparison principles difficult to apply. Even in the case of \eqref{hjb} with $H$ convex in $p$, the value function of the associated mean field control problem is not expected to be uniformly Lipschitz with respect to the relevant weak metric unless the data is very smooth and the noise is non-degenerate (see \cite{DDJ_23} for some results along these lines in the case $a_0 = 0$). It is therefore not clear whether the comparison principles in \cite{bayraktar2023, Soner2022} can be applied apart from these special situations.



\subsection{Our results} 

To explain our main results, we first need to discuss our definition of viscosity (sub/super)solutions. The notion of viscosity solution we adopt is inspired by the proof strategy in \cite{bayraktar2023}, and in particular the observation that if $U$ is smooth and we define a function $\ov{U} = \ov{U}(t,z,m) : [0,T] \times \T^d \times \cP(\T^d) \to \R$ via the formula 
\begin{align} \label{def.ovu}
    \ov{U}(t,z,m) = U\big(t, (\id + z)_{\#} m \big), 
\end{align}
then we have 
\begin{align*}
    &\Delta_z \ov{U}(t,z,m) = \int_{\T^d} \tr(D_x D_m U \big)(t,m^z,x)  m^z(dx) + \int_{\T^d} \int_{\T^d} \tr\big(D_{mm} U\big)(t,m^z,x,y) m^z(dx) m^z(dy),
\end{align*}
where here and throughout the paper we use the notation
\begin{align}
    m^z = (\id + z)_{\#} m \label{def.mz}
\end{align}
for simplicity. Using this observation, one can check that if $U$ is a smooth solution to \eqref{hjb}, then $\ov{U}$ satisfies the PDE 
\begin{align} \label{hjbz} \tag{$\text{HJB}_z$}
\begin{cases}
    \ds - \partial_t \ov{U} - \int_{\T^d} \tr(D_x D_m \ov{U}\big)(t,z,m,x)  m(dx) - a_0 \Delta_z \ov{U}(t,z,m) \vspace{.1cm} \\
    \ds \hspace{2cm} + \int_{\T^d} \ov{H}(x,z,D_m \ov{U}(t,z,m,x),m \big) m(dx) = 0, \quad (t,m) \in [0,T] \times \cP(\T^d), \vspace{.1cm} \\
    \ov{U}(T,z,m) = \ov{G}(z,m), \quad (z,m) \in \T^d \times \cP(\T^d),
    \end{cases}
\end{align}
where we have set 
\begin{align}
   \ov{H}(x,z,p,m) \coloneqq H(x+z, p , m^z), \quad \ov{G}(z,m) \coloneqq G(m^z).
\end{align}
The equation \eqref{hjbz} is much easier to deal with, because the key ``common noise operator," which contains true second-order derivatives in the measure variable, has been exchanged for a finite-dimensional Laplacian. Thus we define viscosity solutions for the original equation \eqref{hjb} in terms of viscosity solutions of the transformed equation \eqref{hjbz}, i.e. we say a function $U$ is a viscosity (sub/super)solution of \eqref{hjb} if the function $\ov{U}$ defined by \eqref{def.ovu} is a viscosity (sub/super)solution of \eqref{hjbz}. Viscosity (sub/super)solutions of \eqref{hjbz}, meanwhile, are defined in a natural way in terms of smooth test functions (see Definition \ref{def.viscosityz} below for the precise definition).

Our main results are (i) existence, (ii) the comparison principle, and (iii) convergence of the finite-dimensional approximations for the equation \eqref{hjb} under the assumption that $G$ is Lipschitz with respect to $\bd_1$, and $H$ satisfies a structural condition like
\begin{align*}
    |H(x,p,m) - H(x',p',m')| \leq C(1 + |p| + |p'|)\big(|x - x'| + |p - p'| + \bd_1(m,m') \big).
\end{align*}
We emphasize two points about our main results. First, our comparison principle allows to compare between a subsolution $U^1$ which is merely upper semi-continuous and a supersolution $U^2$ which is merely lower semi-continuous, unlike \cite{daudinseeger}, \cite{Soner2022}, \cite{bayraktar2023}, which all require some sort of Lipschitz continuity for one or both of the semisolutions. Second, our existence result does not rely on the representation of the solution in terms of a stochastic control problem or game, and instead we construct the solution as a ``limit point" of the solutions of the finite-dimensional PDEs \eqref{hjbn}. While the change of variables we employ in some sense enlarges the set of test functions available (making comparison easier), the fact that we can construct a solution in a straightforward way (without the help of a representation formula) indicates that our set of test functions is not ``too large", i.e. the notion of viscosity solution we employ is not too restrictive to be widely applicable.

In Section \eqref{sec.applications}, we treat two applications. In Subsection \ref{subsec.mfc}, we use our main results to identify the value function of a mean field control problem with common noise as the unique viscosity solution of the corresponding dynamic programming equation. To the best of the authors' knowledge, this is the first such characterization. In the same setting, our convergence result also gives a new analytical proof of the convergence of the values of the finite particle stochastic control problems towards their mean field limit, although we still rely on the dynamic programming principle in \cite{djete2022aop} to check rigorously that the value function is a viscosity solution of the relevant equation. In Subsection \ref{subsec.zerosum}, we treat a zero-sum game on the Wasserstein space (without common noise, for technical reasons). This game was introduced in \cite{cossophamjmpa}, where dynamic programming principles for the upper and lower value functions were established. Here we obtain for the first time a rigorous proof that (i) the upper and lower value functions $U^+$ and $U^-$ arise as the uniform limits of corresponding finite-dimensional upper and lower value functions $(V^{N,+})_{N \in \N}$ and $(V^{N,-})_{N \in \N}$ and (ii) $U^+ = U^-$ when Isaacs' condition holds.

\subsection{Proof strategy}

Our proof of comparison starts by following the strategy of \cite{bayraktar2023} to prove a \textit{partial} comparison principle, i.e. a comparison result under the additional assumption that one of the semi-solutions has some extra regularity. In particular, we prove in Proposition \ref{prop.partialcomparison} a comparison principle assuming that one of the semisolutions is $H^{-k}$-Lipschitz, i.e. Lipschitz with respect to the metric inherited from the negative Sobolev space $H^{-k}$ defined in Section \ref{sec.comparison} for some large $k \in \N$. 

Much of the paper is devoted removing this extra regularity assumption by constructing approximate sub and supersolutions of \eqref{hjb} with $H^{-k}$-Lipschitz regularity. Since we are able to prove that any limit point of the sequence $(V^N)_{N \in \N}$ is a solution of \eqref{hjb}, we thus reduce the problem to proving corresponding regularity estimates for the finite-dimensional equations \eqref{hjbn}. Unfortunately, it does not seem directly possible to prove an estimate of the form
\begin{align} \label{hsproj}
    |V^N(t,\bx) - V^N(t,\by)| \leq C\| m_{\bx}^N - m_{\by}^N \|_{-s} = C \sup_{\|f\|_s \leq 1} \frac{1}{N} \sum_{i = 1}^N \big(f(x^i) - f(y^i) \big)
\end{align}
uniformly in $N$.
On the other hand, it is simple enough to establish the estimate 
\begin{align*}
    |D_{x^i} V^N| \leq C/N, 
\end{align*}
and therefore the results of \cite{fournier2015rate} imply that, if $\sup_{(t,\bx)} |V^{N_k}(t,\bx) - U(t,m_{\bx}^{N_k})| \to 0$ for some subsequence $(N_k)_{k \in \N}$, then it follows that 
\begin{align*}
    \hat{V}^{N_k} \to U, \text{ uniformly on } [0,T] \times \cP(\T^d), \quad \text{where }
    \hat{V}^{N}(t,m) \coloneqq \int_{(\T^d)^{N}} V^N(t,\bx) m^{\otimes N}(d\bx). 
\end{align*}
Remarkably, understanding the Lipschitz regularity of $\hat{V}^{N}$ with respect to negative Sobolev spaces is much more tractable than trying to prove an estimate like \eqref{hsproj}. Indeed, explicit computing the linear derivative $\frac{\delta \hat{V}^N}{\delta m}$ reveals that $\hat{V}^N$ is $C^{-k}$-Lipschitz (hence $H^{-k}$-Lipschitz), uniformly in $N$, provided we can find a constant $C$ such that the estimate 
\begin{align} \label{uniformintro}
    \|D_{x^i}^j V^N \|_{\infty} \leq C/N
\end{align}
holds for each $N \in \N$, $i = 1,...,N$, and $1 \leq j \leq k$. 

The estimate \eqref{uniformintro} is proved by differentiating \eqref{hjbn} repeatedly. As a consequence, the constant $C$ in \eqref{uniformintro} depends on higher order derivatives of $H$ and $G$ in the measure variable. On the other hand, constructing the regular sub/supersolutions of \eqref{hjb} involves replacing $H$ and $G$ by smooth sequences $(H_n)$ and $(G_n)$ through a mollification procedure. The resulting constant $C$ in \eqref{uniformintro} thus blows up in $n$, which provides a complication in showing that the $H^{-s}$-Lipschitz functions we obtain from limits of $(V^N)$ are in fact sub/supersolutions up to small factors that converge to zero as $n \to \infty$. We must then carefully factor the estimate \eqref{uniformintro} as
\begin{align} \label{uniformintro:factor}
    \|D_{x^i}^j V^N \|_{\infty} \leq C_0/N + o(1/N),
\end{align}
where now the constant $C_0$ depends only on bounds for the \emph{first} derivatives in the measure variable of $G$ and $H$.

The most technical part of the paper is Section \ref{sec.uniformest}, where we verify that \eqref{uniformintro:factor} holds provided that the $H$ and $G$ are sufficiently smooth. The strategy for proving \eqref{uniformintro:factor} is reminiscent of the nonlinear adjoint method. In particular, we repeatedly differentiate \eqref{hjbn}, and integrate the resulting equation against an auxiliary Fokker-Planck equation. The $j^{\text{th}}$ iteration of this procedure yields pointwise bounds like \eqref{uniformintro:factor}, but also $L^2$ bounds on quantities like $D D_{x^i}^j V^N$, when integrated against the auxiliary Fokker-Planck equation. Both the $L^{\infty}$ and $L^2$ bounds obtained on the $j^{\text{th}}$ iteration are ``plugged in" to the $(j+1)^{\text{th}}$ iteration, and in the end this scheme allows to prove \eqref{uniformintro:factor} for all relevant $k$ through induction.

To complete the proof of the comparison principle, we start with a viscosity subsolution $U^1$ and a viscosity supersolution $U^2$, and assume without loss of generality that both are strict semi-solutions. Then we mollify the data, to produce sequences $(H^n)_{n \in \N}$, $(G^n)_{n \in \N}$. We use the estimates discussed above to verify that for each $n \in \N$, we can produce a solution $W^n$ to the equation \eqref{hjb} (with $H$ replaced by $H^n$ and $G$ replaced by $G^n$) which is $H^{-k}$-Lipschitz and which is \textit{almost} a solution to the original equation \eqref{hjb} in the viscosity sense. This allows us to conclude that for large $n$, 
\begin{align*}
    U^1 \leq W^n \leq U^2,
\end{align*}
and in particular $U^1 \leq U^2$ as required.


\subsection{Outlook}

There are a number of natural questions about the broader applicability of our approach, which we briefly address here. First, we expect that it should be possible to extend our results to non-compact state space (i.e. replace $\T^d$ with $\R^d$). The key modifications would be to work with weighted Sobolev spaces, and to use a variational principle as in e.g. \cite{bayraktaretalvariational} in the doubling of variables argument. Also, there should be no problem with allowing non-constant (but uncontrolled ) idiosyncratic noise, which amounts to replacing the idiosyncratic noise operator \eqref{idionoiseop} with an operator of the form 
\begin{align*}
    \int_{\T^d} \tr\big(a(t,x,m) D_x D_m U(t,m,x) \big) m(dx),
\end{align*}
provided that $a$ is smooth enough and non-degenerate. 

On the other hand, we rely heavily on the non-degeneracy of the idiosyncratic noise, since without this the key estimate \eqref{uniformintro:factor} is in general false, even if all the data is smooth. Thus, extending our results to allow for degenerate noise seems to require new ideas. For the same reason, it is not clear how to handle fully non-linear equations using our methods. Also, because we use the fact that the common noise operator can be understood as a finite-dimensional derivative, it is important that the common noise have constant coefficients, which is certainly a major restriction (which also appears in \cite{bayraktar2023}, the only previous paper to treat an equation with both common and idiosyncratic noise). It is the authors' view that treating an equation like \eqref{hjb} with variable common noise (and non-zero idiosyncratic noise) is an important open question, since it seems to require a genuinely infinite-dimensional version of Ishii's lemma.

\section{Preliminaries and main results}\label{sec:prelim}

\subsection{Basic notation}
We define $\T^d = \R^d/ \mathbb{Z}^d$ the flat $d$-dimensional torus and by $\mathcal{P}(\T^d)$ the set of Borel probability measures over it. We endow the latter with the first Wasserstein distance $\bd_1$, defined, for all $m_1, m_2 \in \mathcal{P}(\T^d)$ by
$$\bd_1(\mu, \nu) := \sup_{\varphi \text{ $1$-Lipschitz}} \int_{\T^d} \varphi d(m_1- m_2),$$
where the supremum is defined over the $1$-Lipschitz functions $\varphi : \T^d \rightarrow \R.$ With this metric, $\mathcal{P}(\T^d)$ becomes a compact space. 
Unless specified differently, semicontinuity or Lipschitz continuity for functions defined over $\mathcal{P}(\T^d)$ is understood with respect to this distance.


For $k \in \mathbb{N}$ we denote by $C^k = C^k(\T^d)$ the space of functions over $\T^d$ with continuous derivatives up to order $k$ and we endow it with the norm
$$ \norm{f}_{C^k} := \sum_{j \in \N_0^d, \; |j| \le k} \norm{D^{ j }f}_{L^\infty},$$
where $\N_0 = \{0,1,2,\ldots\}$, $\norm{\cdot}_{L^{\infty}}$ is the usual supremum norm, and, for $j = (j_1,j_2,\ldots,j_d) \in \N_0^d$, we set $|j| = j_1 + j_2 + \cdots + j_d$ and
\[
    D^{ j}  := \frac{\partial^{j_1}}{\partial x_1^{j_1}} \dots \frac{\partial^{j_d}}{\partial x_d^{j_d}} .
\]
We denote by $C^{-k}$ its dual space, ie the space of bounded linear functionals over $C^k$ and we denote by $\norm{\cdot}_{C^{-k}}$ the dual norm.

Following \cite{CarmonaDelarue_book_I} Chapter 5, we say that $\Phi : \mathcal{P}(\T^d) \rightarrow \R$ belongs to $C^1(\mathcal{P}(\T^d))$ if there is a jointly continuous map $\frac{\delta \Phi}{\delta m} : \mathcal{P}(\T^d) \times \T^d \rightarrow \R$ such that, for every $m_1 , m_2 \in \mathcal{P}(\T^d),$
\begin{equation}
    \Phi(m_2) - \Phi(m_1) = \int_0^1 \int_{\T^d} \frac{\delta \Phi}{\delta m} \bigl( (1-h) m_1 + hm_2,x) d(m_2-m_1)(x)dh.
\end{equation}
The linear derivative $\frac{\delta \Phi}{\delta m}$ is defined up to an additive constant and we adopt the normalizing convention (similar to \cite{DDJ_23})
\begin{equation}
\int_{\R^d} \frac{\delta \Phi}{\delta m}(m,x)dx = 0, \quad \forall m \in \mathcal{P}(\T^d).
\label{normalizationconvention}
\end{equation}
When $\frac{\delta \Phi}{\delta m}(m,.) \in C^1(\T^d)$ we define the intrinsic derivative of $\Phi$ at $m$ by
$$ D_m \Phi(m,x) = D_x \frac{\delta \Phi}{\delta m}(m,x).$$
Unlike the linear derivative, the intrinsic derivative is uniquely defined if it exists. 

For $\by \in (\T^d)^N$, $x \in \T^d$, and $i = 1,2,\ldots, N$, we frequently use the notation
\begin{align*}
        \by^{-i} = (y^1,...,y^{i-1},y^{i+1},...,y^N) \in (\T^d)^{N-1}, \quad (\by^{-i},x) = (y^1,...,y^{i-1},x,y^{i+1},...,y^N) \in (\T^d)^N.
    \end{align*}

\subsection{The notion of viscosity solution}

We now state precisely the definition of viscosity (sub/super)-solutions of \eqref{hjbz}. The first step is to make precise the space of test functions we will use.

\begin{defn}
    We denote by $\cpart = \cpart\big([0,T] \times \T^d \times \cP(\T^d) \big)$ the set of continuous functions $\Phi(t,z,m) : [0,T] \times \T^d \times \cP(\T^d) \to \R$
    such that the derivatives
    \begin{align*}
        (\partial_t \Phi, D_z \Phi, D_{zz}^2 \Phi)(t,z,m) : [0,T] \times \T^d \times \cP(\T^d) \to \R \times \R^d \times \R^{d \times d}
    \end{align*}
    as well as  
    \begin{align*}
        (D_m \Phi, D_x D_m \Phi)(t,z,m,x) : [0,T] \times \T^d \times \cP(\T^d) \times \T^d \to \R^d \times \R^{d \times d}
    \end{align*}
    exist and are continuous. We denote by $\cC^{1,2,2} = \cC^{1,2,2}\big([0,T] \times \T^d \times \cP(\T^d) \big)$ the set of $\Phi \in \cpart$ such that 
    $$D_{mm} \Phi(t,z,m,x,y) :  [0,T] \times \T^d \times \cP(\T^d) \times \T^d \times \T^d \to \R^{d \times d}$$ exists and is continuous.
\end{defn}

\begin{defn} \label{def.viscosityz}
    An upper semi-continuous function 
    $$V  : [0,T] \times \T^d \times \cP(\T^d) \to \R \cup \{ - \infty \}$$
    is called a viscosity subsolution of \eqref{hjbz} if $V(T,z,m) \leq \ov{G}(z,m)$, and for any 
    $\Phi \in \cpart$
    such that $V - \Phi$ attains a local maximum at $(t_0,z_0,m_0) \in (0,T) \times \T^d \times \cP(\T^d)$, we have 
    \begin{align} \label{subsoltest}
        \ds &- \partial_t \Phi(t_0,z_0,m_0) - \int_{\T^d} \tr(D_x D_m \Phi \big)(t_0,z_0,m_0,x)  m_0(dx) - a_0 \Delta_z \Phi (t_0,z_0,m_0) \vspace{.1cm} \nonumber \\
    \ds &\hspace{2cm} + \int_{\T^d} \ov{H}\big(x, D_m \Phi(t_0,z_0,m_0,x), m_0,z_0 \big) m_0(dx) \leq 0.
    \end{align}
    Similarly, a lower semi-continuous function 
    $$V  : [0,T] \times \T^d \times \cP(\T^d) \to \R \cup \{ + \infty \}$$ is called a viscosity supersolution of \eqref{hjbz} if $V(T,z,m) \geq \ov{G}(z,m)$, and for any $\Phi \in \cpart$ such that $V - \Phi$ attains a local minimum at $(t_0,z_0,m_0) \in (0,T) \times \T^d \times \cP(\T^d)$, we have 
    \begin{align} \label{supersoltest}
        \ds &- \partial_t \Phi(t_0,z_0,m_0) - \int_{\T^d} \tr(D_x D_m \Phi \big)(t_0,z_0,m_0,x)  m_0(dx) - a_0 \Delta_z \Phi (t_0,z_0,m_0) \vspace{.1cm} \nonumber  \\
    \ds &\hspace{2cm} + \int_{\T^d} \ov{H}\big(x, z_0,D_m \Phi(t_0,z_0,m_0,x), m_0 \big) m_0(dx) \geq 0.
    \end{align}
    A function $V$ is called a viscosity solution if it is both a viscosity subsolution and a viscosity supersolution.
\end{defn}

Just like in finite dimensions, in order to check that a function $V$ is a (sub/super)-solution to \eqref{hjbz}, it suffices to restrict attention to test functions with some extra regularity, and such that the maximum achieved is strict and global. We record this fact as a lemma, whose proof is delayed until the end of the section.

\begin{lem} \label{lem.testfunctions}
    A function $V : [0,T] \times \T^d \times \cP(\T^d)$ is a viscosity subsolution of \eqref{hjbz} if and only if $V(T,z,m) \leq \ov{G}(z,m)$ and for each $\Phi \in C^{1,2,2}$ such that $U - \Phi$ attains a strict global maximum at $(t_0,z_0,m_0) \in (0,T) \times \T^d \times \cP(\T^d) $, \eqref{subsoltest} holds. An analogous characterization holds for viscosity subsolutions.
\end{lem}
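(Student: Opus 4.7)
The ``only if'' direction is immediate: $C^{1,2,2} \subset \cpart$, and every strict global maximum is in particular a local maximum, so the subsolution condition of Definition \ref{def.viscosityz} specializes directly to the statement of the lemma.

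For the ``if'' direction, I would argue by contradiction. Suppose $V$ satisfies the condition in the lemma but fails to be a viscosity subsolution per Definition \ref{def.viscosityz}. Then there exist $\Phi \in \cpart$ and a point $(t_0, z_0, m_0)$ at which $V-\Phi$ attains a local maximum while the left-hand side of \eqref{subsoltest} equals some $\delta > 0$. The plan is to construct an auxiliary $\tilde\Phi \in C^{1,2,2}$ such that $V - \tilde\Phi$ attains a strict global maximum at $(t_0, z_0, m_0)$ and whose relevant derivatives $\partial_t\tilde\Phi$, $\Delta_z\tilde\Phi$, $D_m\tilde\Phi$, $D_x D_m\tilde\Phi$ at $(t_0, z_0, m_0)$ coincide with those of $\Phi$. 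Applying the hypothesis to $\tilde\Phi$ would then yield $\delta \le 0$, a contradiction.

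My concrete construction is $\tilde\Phi = \hat\Phi + \alpha\Psi$, where $\hat\Phi$ is the formal Taylor polynomial of $\Phi$ at $(t_0, z_0, m_0)$ — first order in $t$, second order in $z$, and first order in $m$ via the linear derivative $\delta\Phi/\delta m(t_0,z_0,m_0,\cdot)$. Being polynomial in $(t,z)$ and linear in $m$, the function $\hat\Phi$ lies in $C^{1,2,2}$ (with $D_{mm}\hat\Phi \equiv 0$), and by construction its relevant derivatives at $(t_0, z_0, m_0)$ match those of $\Phi$. The penalty $\Psi \in C^{1,2,2}$ is chosen non-negative, vanishing only at $(t_0, z_0, m_0)$, with $\partial_t\Psi$, $D_z\Psi$, $\Delta_z\Psi$, $D_m\Psi$, and $D_x D_m\Psi$ all vanishing there. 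A concrete choice is
\[
\Psi(t,z,m) \;=\; (t-t_0)^4 + \sum_{i=1}^d \sin^4(\pi(z_i - z_{0,i})) + \psi_m(m_0, m),
\]
with $\psi_m(m_0,m) = \sum_{k\ge 1} c_k \bigl(\int_{\T^d} e_k\, d(m-m_0)\bigr)^2$ built from a dense family $\{e_k\}\subset C^2(\T^d)$ and coefficients $c_k > 0$ with $\sum_k c_k \|e_k\|_{C^2}^2 < \infty$; a direct calculation (using the double zero in each summand) confirms that both $D_m\psi_m(m_0, \cdot)$ and $D_x D_m\psi_m(m_0, \cdot)$ vanish. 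For $\alpha$ large enough, $\alpha\Psi$ strictly dominates the Taylor residual $R = \Phi - \hat\Phi$ away from $(t_0, z_0, m_0)$, forcing $V - \tilde\Phi$ to attain a strict global maximum there.

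The main technical obstacle I anticipate is precisely the domination of $R$ by $\alpha\Psi$. Since $\cpart$ provides only continuity — not any quantitative rate — for derivatives such as $\partial_t\Phi$, the residual may decay arbitrarily slowly near $(t_0, z_0, m_0)$ (for instance, only as $o(|t-t_0|)$ in the $t$-direction), whereas the requirement that all five prescribed derivatives of $\Psi$ vanish at the base point dictates that $\Psi$ itself vanish to a prescribed finite order. Resolving this tension requires care: one option is to relax the exact matching of derivatives and instead use approximate matching together with the continuity of the operator in \eqref{subsoltest} as a functional of its derivatives, ensuring that the perturbed LHS remains bounded below by $\delta/2$; another is to calibrate the vanishing rate of $\Psi$ quantitatively to the modulus of continuity of $\Phi$'s derivatives on a suitable compact neighborhood. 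The supersolution case is handled by the identical argument with signs reversed.
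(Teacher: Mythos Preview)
Your ``only if'' direction is fine. The ``if'' direction, however, has a genuine gap that you yourself identify but do not close. Replacing $\Phi$ by its Taylor polynomial $\hat\Phi$ destroys the local-maximum property of $V - \Phi$: the residual $R = \Phi - \hat\Phi$ is only $o(|t-t_0|) + o(|z-z_0|^2) + o(\bd_1(m,m_0))$ with no sign, while any penalty $\Psi$ whose derivatives $\partial_t\Psi$, $\Delta_z\Psi$, $D_m\Psi$, $D_xD_m\Psi$ vanish at the base point must itself vanish there to strictly higher order (e.g.\ $(t-t_0)^4$). Hence no finite $\alpha$ can force $\alpha\Psi \ge R$ in a full neighborhood, and $V - \tilde\Phi$ need not have even a local maximum at $(t_0,z_0,m_0)$. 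Your two proposed escape routes are vague; neither is carried out.

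The paper's argument sidesteps this entirely by separating the two issues. First, it keeps $\Phi$ itself and adds the penalty
\[
(t,z,m) \;\longmapsto\; \tfrac{1}{\epsilon}\Bigl( |t-t_0|^2 + |z-z_0|^4 + \|m-m_0\|_{-k}^2 \Bigr)
\]
for $k$ large. Each term is non-negative and has exactly the vanishing needed ($\partial_t$, $D_z$, $D^2_{zz}$, $D_m$, $D_xD_m$ all zero at the base point), so the relevant derivatives of the perturbed test function are unchanged, while compactness and the non-negativity of the penalty upgrade the local maximum to a strict global one for $\epsilon$ small. No Taylor expansion is taken, so there is no residual to dominate. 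Second --- and this is the step your proposal is missing --- the resulting test function still lies only in $\cpart$, and the passage to $C^{1,2,2}$ is achieved by the mollification Lemma~\ref{lem.c2moll}: any $\Phi \in \cpart$ can be uniformly approximated, together with $\partial_t\Phi$, $D_z\Phi$, $D^2_{zz}\Phi$, $D_m\Phi$, $D_xD_m\Phi$, by a sequence $(\Phi^N)$ in $C^{1,2,2}$. Since the maximum is now strict, the maxima of $V - \Phi^N$ converge to $(t_0,z_0,m_0)$, and one passes to the limit in the subsolution inequality by continuity of the operator in its arguments. Your first proposed fix (``approximate matching'') is in spirit exactly this, but it requires a nontrivial density result for $C^{1,2,2}$ in $\cpart$, which is the content of Lemma~\ref{lem.c2moll}.
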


With Definition \ref{def.viscosityz} in place, we can define vicosity solutions for the original PDE \eqref{hjb} in terms of viscosity solutions for \eqref{hjbz}.
\begin{defn} \label{def.viscosity}
    An upper semi-continuous function $V : [0,T] \times \cP(\T^d) \to \R \cup \{- \infty\}$ is called a viscosity subsolution of \eqref{hjb} if the function $\ov{V} : [0,T] \times \T^d \times \cP(\T^d) \to \R$ defined by 
    \begin{align} \label{def.ovv}
        \ov{V}(t,z,m) = V(t,m^z)
    \end{align}
    is a viscosity subsolution of \eqref{hjbz}. Likewise, a lower semi-continuous function $V : [0,T] \times \cP(\T^d) \to \R \cup \{+ \infty\}$ is called a viscosity supersolution of \eqref{hjb} if the function $\ov{V}$ defined by \eqref{def.ovv} is a viscosity supersolution of \eqref{hjbz}. 
\end{defn}

\subsection{Main results}

The following assumption will be used throughout the paper.

\begin{assumption} \label{assump.d1}
The function $G$ is Lipschitz with respect to the metric $\bd_1$ with Lipschitz constant $C_G > 0$, while $H$ satisfies the regularity condition
\be \label{assump.H} 
|H(x,p,m) - H(x',p',m')| \leq C_H \big(1 + |p| + |p'| \big) \big( |x-x'| + |p-p'| + \bd_1(m,m') \big) 
\ee
for some $C_H > 0$ each $x,x' \in \T^d$, $p,p' \in \R^d$, $m,m' \in \cP(\T^d)$.
\end{assumption}

We are now in a position to state our main results, which show that under Assumption \ref{assump.H}, there is a unique viscosity solution of \eqref{hjb} which arises as the uniform limit of the viscosity solutions of \eqref{hjbn}, and moreover the comparison principle holds for \eqref{hjb}.

\begin{thm}[Existence] \label{thm.exist}
    Suppose that Assumption \ref{assump.d1} holds. Then there exists a viscosity solution $U$ of \eqref{hjb}, which satisfies the estimate 
    \begin{align} \label{d1regintro}
        |U(t,m) - U(t',m')| \leq C \Big(|t - t'|^{1/2} + \bd_1(m,m') \Big)
    \end{align}
    for some constant $C = C(C_G, C_H)$ and all $t,t' \in [0,T]$, $m,m' \in \cP(\T^d)$.
\end{thm}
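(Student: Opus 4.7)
The strategy is to obtain $U$ as the uniform limit (along a subsequence) of the classical solutions $V^N$ of the $N$-particle equations \eqref{hjbn}, and then to pass the finite-dimensional viscosity inequalities to the limit through the lifting \eqref{def.ovv} used in Definition \ref{def.viscosity}. I would proceed in four steps.

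\textbf{Step 1 (finite-dimensional solvability and a priori bounds).} For each fixed $N$, equation \eqref{hjbn} is a uniformly parabolic semilinear equation on the compact product $[0,T] \times (\T^d)^N$. Because $|H(\cdot,p,\cdot)|$ is at most quadratic in $p$ (which is implicit in Assumption \ref{assump.d1}) and the idiosyncratic noise is non-degenerate, classical parabolic theory together with a Bernstein-type gradient estimate produces a smooth solution $V^N \in C^{1,2}$. Differentiating \eqref{hjbn} in $x^i$ and applying the maximum principle together with the Lipschitz constants $C_G$ and $C_H$ yields the uniform gradient bound
\begin{equation}
    \|D_{x^i} V^N\|_{L^\infty} \leq \frac{C}{N}, \qquad C = C(C_G, C_H, T), \label{plan:grad}
\end{equation}
from which (using the symmetry of $V^N$) one extracts the empirical $\bd_1$-Lipschitz estimate $|V^N(t,\bx) - V^N(t,\by)| \leq C\,\bd_1(m_{\bx}^N, m_{\by}^N)$. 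Standard parabolic estimates combined with the $L^\infty$ bound on $V^N$ supply the $\tfrac{1}{2}$-Hölder bound in time, also uniform in $N$.

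\textbf{Steps 2--3 (smoothing and compactness).} To pass from $(\T^d)^N$ to $\cP(\T^d)$, I consider
\[
    \hat V^N(t,m) := \int_{(\T^d)^N} V^N(t,\bx)\, m^{\otimes N}(d\bx).
\]
The estimate \eqref{plan:grad} combined with the empirical-measure rates of \cite{fournier2015rate} gives $\sup_{(t,\bx)} |\hat V^N(t, m_{\bx}^N) - V^N(t,\bx)| \to 0$. Since $\hat V^N$ inherits $\bd_1$-Lipschitz regularity in $m$ and $\tfrac{1}{2}$-Hölder regularity in $t$ with constants depending only on $C_G, C_H$, and $T$, and since $(\cP(\T^d), \bd_1)$ is compact, Arzelà--Ascoli produces a subsequence $\hat V^{N_k}$ converging uniformly on $[0,T] \times \cP(\T^d)$ to a continuous $U$ automatically satisfying \eqref{d1regintro}.

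\textbf{Step 4 (verifying the viscosity solution property).} This is the main technical step. I lift via $\overline{U}(t,z,m) = U(t, m^z)$ and set $\overline{V}^N(t,z,\bx) := V^N(t, x^1+z, \ldots, x^N+z)$. A direct computation (using $D_z \overline{V}^N = \sum_i D_{x^i} V^N$ and $\Delta_z \overline{V}^N = \sum_{i,j}\tr(D_{x^ix^j}V^N)$) shows that $\overline{V}^N$ is a classical solution of the natural particle-level counterpart of \eqref{hjbz}. Given $\Phi \in \cC^{1,2,2}$ with $\overline{U} - \Phi$ attaining a strict global maximum at $(t_0, z_0, m_0)$ (by Lemma \ref{lem.testfunctions} it suffices to consider such $\Phi$), I define $\Phi^N(t,z,\bx) := \Phi(t,z,m_{\bx}^N)$ and maximize $\overline{V}^N - \Phi^N$ on $[0,T] \times \T^d \times (\T^d)^N$. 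The uniform convergence $\hat V^{N_k} \to U$ together with the approximation bound from \cite{fournier2015rate} forces the maximizers $(t_{N_k}, z_{N_k}, \bx_{N_k})$ to satisfy $t_{N_k} \to t_0$, $z_{N_k} \to z_0$, and $m_{\bx_{N_k}}^{N_k} \to m_0$ in $\bd_1$. Evaluating the classical subsolution inequality for $\overline{V}^{N_k}$ at this point and using the chain-rule identities
\[
    D_{x^i}\Phi^N = \tfrac{1}{N}D_m \Phi(t,z,m_{\bx}^N,x^i), \qquad \sum_{i=1}^N \Delta_{x^i}\Phi^N = \tfrac{1}{N}\sum_{i=1}^N \tr(D_x D_m \Phi)(t,z,m_{\bx}^N,x^i) + O(1/N),
\]
together with continuity of $H$ from Assumption \ref{assump.d1} and $\bd_1$-continuity of the test-function data, one passes to the limit $N \to \infty$ and recovers inequality \eqref{subsoltest} for $\Phi$ at $(t_0, z_0, m_0)$. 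The supersolution inequality is symmetric, and $U(T, \cdot) = G$ follows from the terminal condition for $V^N$ and uniform convergence. The main obstacle in this scheme is the simultaneous passage to the limit of the $z$-Laplacian (coming from the common noise, which survives the shift) and of the particle Laplacian $\sum_i \Delta_{x^i}$ (which produces the genuinely measure-theoretic term $\int \tr(D_x D_m \Phi)\, m(dx)$ in the limit). The change of variables encoded in Definition \ref{def.viscosity} is precisely what allows both contributions to be handled with a single classical-PDE argument rather than a genuine infinite-dimensional Ishii lemma.
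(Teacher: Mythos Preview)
Your proposal is correct and follows essentially the same route as the paper: uniform-in-$N$ Lipschitz and $\tfrac12$-H\"older bounds for $V^N$ (the paper's Lemma \ref{lem:VnLip}), Arzel\`a--Ascoli to extract a limit $U$, and then the lifted finite-dimensional argument with $\ov V^N$ and projected test functions $\Phi^N(t,z,\bx)=\Phi(t,z,m_{\bx}^N)$ to pass the viscosity inequalities to the limit (the paper's Proposition \ref{prop.viscosity}). The only cosmetic difference is that you extend $V^N$ to $\cP(\T^d)$ via the averaged lift $\hat V^N$ and Fournier--Guillin, whereas the paper invokes an abstract Lipschitz extension $\widetilde V^N$ with $\widetilde V^N(t,m_{\bx}^N)=V^N(t,\bx)$; both choices yield the same compactness and the same convergence $\sup_{(t,\bx)}|V^N(t,\bx)-U(t,m_{\bx}^N)|\to 0$ needed in Step 4.
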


\begin{thm}[Uniqueness/comparison] \label{thm.comparison} Suppose that Assumption \ref{assump.d1} holds. Suppose that $U^1$ is a viscosity subsolution of \eqref{hjb} and $U^2$ is a viscosity supersolution of \eqref{hjb}. Then 
\begin{align*}
    U^1 \leq U^2 \text{ in } [0,T] \times \cP(\T^d).
\end{align*}
In particular, there is at most one viscosity solution of \eqref{hjb}.    
\end{thm}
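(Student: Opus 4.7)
The plan is to follow the strategy outlined in the introduction: first reduce to the transformed equation \eqref{hjbz} via Definition \ref{def.viscosity}, then establish the result by sandwiching $U^1$ and $U^2$ between smooth mollified approximations that satisfy sufficient regularity (namely $H^{-k}$-Lipschitz continuity) to fall under a partial comparison principle. The transformation to \eqref{hjbz} is important because it converts the second-order measure derivatives in the common noise term into a finite-dimensional Laplacian in $z$, which makes the finite-dimensional Ishii-type doubling of variables tractable.

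First, I would reduce to the case where $U^1$ is a \emph{strict} subsolution and $U^2$ is a \emph{strict} supersolution by the standard perturbation $U^1 - \varepsilon(T-t)$, $U^2 + \varepsilon(T-t)$, and then send $\varepsilon \to 0^+$ at the end. Next, I invoke the partial comparison principle (Proposition \ref{prop.partialcomparison}), which says that if one of the two semisolutions happens to be Lipschitz with respect to a sufficiently negative Sobolev metric $H^{-k}$, then $U^1 \le U^2$. The goal then is to construct, for each $n \in \N$, a function $W^n$ on $[0,T] \times \cP(\T^d)$ which is (i) $H^{-k}$-Lipschitz uniformly in $n$, (ii) an \emph{approximate} viscosity solution of \eqref{hjb} (i.e., a solution up to $o(1)$ errors as $n \to \infty$), and then to apply partial comparison twice, once to $(U^1, W^n)$ and once to $(W^n, U^2)$.

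To build $W^n$, I mollify the data to obtain smooth $(H^n, G^n)$, and for each $N$ I solve the finite-dimensional PDE \eqref{hjbn} with this mollified data to obtain $V^{N,n}$. The uniform derivative estimates developed in Section \ref{sec.uniformest}, summarized by the factored bound \eqref{uniformintro:factor}, show that the symmetrization
\[
\hat{V}^{N,n}(t,m) := \int_{(\T^d)^N} V^{N,n}(t,\bx)\, m^{\otimes N}(d\bx)
\]
has linear derivative $\delta \hat V^{N,n}/\delta m$ bounded in $C^k$ by a constant $C_0$ depending only on bounds for the \emph{first} measure-derivatives of $H^n, G^n$ (which in turn are controlled by those of $H,G$), plus a lower-order term $o(1)$ in $N$. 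This yields uniform $C^{-k}$-Lipschitz (hence $H^{-k}$-Lipschitz) regularity for $\hat V^{N,n}$. Extracting a limit point along a subsequence $N_k \to \infty$ via compactness and the convergence result from Theorem \ref{thm.convergence} applied to the mollified data produces $W^n$, which solves the mollified \eqref{hjb} in the viscosity sense and inherits the $H^{-k}$-Lipschitz estimate with a constant independent of $n$.

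The key technical step, and the one I expect to be the main obstacle, is showing that this $W^n$ is an $o(1)$-approximate sub/supersolution of the \emph{original} \eqref{hjb}: when one plugs a test function $\Phi \in \cpart$ into the PDE for $W^n$ (with data $H^n, G^n$) and compares with the original $H, G$, the error is controlled by the uniform convergence $H^n \to H$, $G^n \to G$ on bounded sets in $(x,p,m)$, together with the gradient bound $|D_m \Phi|$ available from the test function. Because the Lipschitz constant of $W^n$ (in $H^{-k}$) is bounded by $C_0$ uniformly in $n$ thanks to the sharp factorization \eqref{uniformintro:factor}, the partial comparison principle applies uniformly in $n$, giving $U^1 \le W^n + \eta_n$ and $W^n - \eta_n \le U^2$ with $\eta_n \to 0$. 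Combining these inequalities and letting $n \to \infty$, followed by sending $\varepsilon \to 0$, yields $U^1 \le U^2$ globally, as required. The uniqueness statement is then immediate, since any two viscosity solutions are both sub- and supersolutions.
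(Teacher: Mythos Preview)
Your approach is essentially the paper's: strict-perturb, mollify the data, build $W^n$ as the solution with mollified data via the finite-dimensional approximation, then sandwich using Proposition~\ref{prop.partialcomparison}.

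One imprecision to flag: you claim the $H^{-k}$-Lipschitz constant of $W^n$ is uniform in $n$ thanks to \eqref{uniformintro:factor}. It is not. The constant $C_0$ in Theorem~\ref{thm:VNestimates} depends on $C_{H^n,k}$, which includes $\sup_m\|H^n(\cdot,\cdot,m)\|_{C^k(\T^d\times B_R)}$, and this blows up under mollification when the original $H$ is merely Lipschitz. Fortunately this uniformity is unnecessary: partial comparison only requires each $W^n$ to be $H^{-k}$-Lipschitz with \emph{some} constant. What \emph{must} be uniform in $n$, and what you do not explicitly identify, is the $\bd_1$-Lipschitz constant of $W^n$; this comes from Lemma~\ref{lem.mollification}(2) combined with \eqref{d1regintro}. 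That uniform $\bd_1$-Lipschitz bound is the correct source of the gradient control you allude to: at any point where $\ov{W^n}-\Phi$ is maximized one gets $|D_m\Phi(t_0,z_0,m_0,x)|\le C_0$ on $\operatorname{supp}(m_0)$ with $C_0$ independent of $n$ (see \cite{Soner2023}, Proposition~5.1), and this is what lets you bound $\sup_{|p|\le C_0}|H^n-H|$ by a sequence $c_n\to 0$ and close the argument.
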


\begin{thm}[Convergence] \label{thm.convergence}
    Suppose that Assumption \ref{assump.d1} holds. Then we have 
    \begin{align*}
        \sup_{(t,\bx) \in [0,T] \times (\T^d)^N} \big|V^N(t,\bx) - U(t,m_{\bx}^N) \big| \xrightarrow{N \to \infty} 0, 
    \end{align*}
    where $V^N$ denotes the unique viscosity solution of \eqref{hjbn} and $U$ denotes the unique viscosity solution of \eqref{hjb}.
\end{thm}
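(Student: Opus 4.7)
The plan is to apply the half-relaxed limits method together with the comparison principle from Theorem \ref{thm.comparison} and the uniqueness of the viscosity solution of \eqref{hjb}. First I would establish uniform (in $N$) regularity for $V^N$: differentiating \eqref{hjbn} once and using Assumption \ref{assump.d1} yields the pointwise gradient bound $|D_{x^i} V^N(t, \bx)| \leq C/N$, which combined with the symmetry of $V^N$ and a rearrangement argument gives the uniform estimate $|V^N(t, \bx) - V^N(t, \by)| \leq C\, \bd_1(m_\bx^N, m_\by^N)$; standard parabolic regularity, exploiting the nondegenerate idiosyncratic noise, yields a uniform $1/2$-Hölder continuity in time. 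With these uniform moduli in hand, I define $\overline V, \underline V : [0,T] \times \cP(\T^d) \to \R$ as the relaxed upper and lower limits of $V^N(s, \by)$ as $N \to \infty$ with $s \to t$ and $\bd_1(m_\by^N, m) \to 0$; both inherit the regularity in \eqref{d1regintro}, $\underline V \leq \overline V$, and $\overline V(T, \cdot) = \underline V(T, \cdot) = G$ thanks to $V^N(T, \bx) = G(m_\bx^N)$ and the $\bd_1$-Lipschitz continuity of $G$.

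The core of the argument is to show that $\overline V$ is a viscosity subsolution and $\underline V$ a viscosity supersolution of \eqref{hjb}. Treating the subsolution case, by Definition \ref{def.viscosity} and Lemma \ref{lem.testfunctions} I may fix $\Phi \in \cC^{1,2,2}$ such that $(t,z,m) \mapsto \overline V(t, m^z) - \Phi(t,z,m)$ attains a strict global maximum at $(t_0, z_0, m_0)$, and it suffices to verify \eqref{subsoltest} there. Writing $\vec z = (z, \ldots, z) \in (\T^d)^N$ and setting $\tilde V^N(t, z, \bx) := V^N(t, \bx + \vec z)$, the observation from the introduction
\begin{align*}
a_0 \Delta_z \tilde V^N(t, z, \bx) = a_0 \sum_{i,j=1}^N \tr\bigl(D_{x^i x^j} V^N\bigr)(t, \bx + \vec z)
\end{align*}
shows that $\tilde V^N$ solves the $N$-particle analog of \eqref{hjbz}. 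Introducing $\Phi^N(t, z, \bx) := \Phi(t, z, m_\bx^N)$ on $[0,T] \times \T^d \times (\T^d)^N$ and a small Wasserstein penalization forcing approximate maximizers $(t_N, z_N, \bx_N)$ of $\tilde V^N - \Phi^N$ with $(t_N, z_N, m_{\bx_N}^N) \to (t_0, z_0, m_0)$, I would write the PDE inequality for $\tilde V^N$ at this point. The elementary derivative identities
\begin{align*}
\sum_i \Delta_{x^i} \Phi^N = \int_{\T^d} \tr(D_x D_m \Phi)(t, z, m_\bx^N, x)\, m_\bx^N(dx) + O(1/N), \qquad N D_{x^i} \Phi^N = D_m \Phi(t, z, m_\bx^N, x^i),
\end{align*}
together with the continuity of $H$ and the convergence $m_{\bx_N + \vec{z_N}}^N \to m_0^{z_0}$ in $\bd_1$, allow passing to the limit $N \to \infty$ to obtain exactly \eqref{subsoltest} at $(t_0, z_0, m_0)$.

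Finally, by Theorem \ref{thm.comparison} we get $\overline V \leq \underline V$, and the reverse inequality is immediate, so $\overline V = \underline V$; this common function is a viscosity solution of \eqref{hjb} and therefore equals $U$ by uniqueness. A standard compactness argument on $[0,T] \times \cP(\T^d)$, leveraging the uniform equicontinuity of the family $(V^N)$, then upgrades this pointwise equality to the uniform convergence claimed in the theorem. The hard part will be the second paragraph above: producing approximate maximizers in the high-dimensional $N$-particle problem whose empirical-measure projections track $(t_0, z_0, m_0)$, and carefully tracking the $O(1/N)$ remainders in the derivative identities for cylinder functions of empirical measures. The change-of-variables strategy underlying \eqref{hjbz}, however, is engineered precisely so that the common-noise operator at the $N$-particle level reduces to a finite-dimensional Laplacian in $z$, making this passage conceptually transparent.
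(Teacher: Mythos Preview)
Your proposal is correct and follows essentially the same route as the paper. The paper packages the argument slightly differently: rather than half-relaxed limits, it uses the uniform regularity from Lemma~\ref{lem:VnLip} together with Arzel\`a--Ascoli to extract a uniformly convergent subsequence, then shows in Proposition~\ref{prop.viscosity} that any such limit point is a viscosity solution (via exactly the change of variables $\ov V^N(t,z,\bx)=V^N(t,\bx+\vec z)$ and the projection $\Phi^N(t,z,\bx)=\Phi(t,z,m_\bx^N)$ you describe), and concludes by uniqueness that the full sequence converges. Given the uniform equicontinuity you establish up front, your half-relaxed limits $\overline V,\underline V$ are continuous and coincide with the subsequential uniform limits, so the two frameworks collapse into one another; the only cosmetic difference is that you split Proposition~\ref{prop.viscosity} into separate sub- and supersolution halves and invoke comparison once, whereas the paper shows the limit is a full solution and invokes uniqueness. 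Your mention of an extra Wasserstein penalization is unnecessary once you restrict to strict global maxima of $\Phi$ (as Lemma~\ref{lem.testfunctions} permits), since then global maximizers of $\tilde V^N-\Phi^N$ automatically have empirical measures accumulating at $(t_0,z_0,m_0)$.
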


Our final main result states that the equation \eqref{hjb} preserves Lipschitz regularity with respect to various metrics. For this, we will need to make use of the conditions
\be \label{glipk} \tag{$\text{GLip}_k$}
\text{$G$ is $C_{G,k}$-Lipschitz with respect to $C^{-k}$.}
\ee 
\be \label{hlipk} \tag{$\text{HLip}_k$}
\begin{cases} 
\text{For each $R > 0$, there is a constant $C_{H,k,R} > 0$ such that} \vspace{.1cm}\\
      \qquad   |H(x,p,m) - H(x,p,m')| \leq C_{H,k,R} \|m - m'\|_{C^{-k}}, \vspace{.1cm} \\
      \text{for each $x \in \T^d$, $p \in B_R \subset \R^d$, $m,m' \in \cP(\T^d)$,  and } \vspace{.1cm} \\
      \qquad \sup_{m \in \cP(\T^d)} \| H(\cdot, \cdot, m) \|_{C^k(\T^d \times B_R)} \leq C_{H,k,R},
\end{cases}
\ee

\begin{thm}[Regularity] \label{thm.regularity}
    Suppose that Assumption \ref{assump.d1} holds. Suppose further that for some $k \in \N$, \eqref{glipk} and \eqref{hlipk} hold. Then the unique viscosity solution $U$ of \eqref{hjb} satisfies
    \begin{align} \label{regintro}
        |U(t,m) - U(t,m')| \leq C \|m- m'\|_{C^{-k}}
    \end{align}
    for some constant $C$ and each $t \in [0,T]$, $m,m' \in \cP(\T^d)$. Moreover, if $k \geq 2$, then $U$ is also globally Lipschitz in time.
\end{thm}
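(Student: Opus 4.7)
The plan is to construct $U$ as a limit of a smoothed version of the finite-dimensional approximations and pass regularity through. By Theorems~\ref{thm.exist} and \ref{thm.comparison}, the unique viscosity solution $U$ coincides with the uniform limit of $V^N$ established in Theorem~\ref{thm.convergence}. Define
$$\hat V^N(t,m) := \int_{(\T^d)^N} V^N(t,\bx)\, m^{\otimes N}(d\bx).$$
The first step is to upgrade $V^N(t,\bx) - U(t,m_{\bx}^N) \to 0$ uniformly to $\hat V^N \to U$ uniformly on $[0,T]\times\cP(\T^d)$; this follows from a Fournier--Guillin quantitative Wasserstein law of large numbers combined with the basic bound $|D_{x^i}V^N| \leq C/N$, which is available under Assumption~\ref{assump.d1} alone and makes the gap $|V^N(t,\bx) - \hat V^N(t,m_{\bx}^N)|$ vanish in $N$.

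The main technical input, delivered by Section~\ref{sec.uniformest}, is the uniform estimate $\|D_{x^i}^j V^N\|_\infty \leq C_0/N$ for $1 \leq j \leq k$, with $C_0$ depending only on $d,T,k$ and the constants in \eqref{glipk}, \eqref{hlipk}. Given this, permutation symmetry of $V^N$ yields the explicit formula
$$\frac{\delta \hat V^N}{\delta m}(t,m,x) = N \int_{(\T^d)^{N-1}} V^N\!\big(t,x,x^2,\ldots,x^N\big) \prod_{j=2}^N m(dx^j) + c_N(t,m),$$
with $c_N(t,m)$ enforcing the normalization \eqref{normalizationconvention}. Differentiating under the integral and applying the uniform bound gives $\|\tfrac{\delta \hat V^N}{\delta m}(t,m,\cdot)\|_{C^k} \leq C$ uniformly in $(N,t,m)$. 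Integrating along the linear interpolation between two measures then yields $|\hat V^N(t,m_1) - \hat V^N(t,m_2)| \leq C\|m_1-m_2\|_{C^{-k}}$, and this inequality passes to $U$ by uniform convergence, proving \eqref{regintro}.

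For the time-Lipschitz assertion when $k \geq 2$, the idea is to read off from \eqref{hjbn} that $\|\partial_t V^N\|_\infty \leq C$ uniformly. The diagonal second-order terms $\sum_i \Delta_{x^i} V^N$ are bounded by $N \cdot C/N = C$ using $\|D_{x^i}^2 V^N\|_\infty \leq C/N$; the Hamiltonian term is bounded using the quadratic growth in $p$ implied by Assumption~\ref{assump.d1} together with $|ND_{x^i}V^N|\leq C$; and the common-noise cross terms $a_0 \sum_{i\neq j} \tr(D_{x^ix^j}V^N)$ are controlled via a companion estimate $\|D_{x^i}D_{x^j}V^N\|_\infty \leq C/N^2$ of the same flavor as the diagonal bound (or equivalently via the change of variables \eqref{def.ovu} that identifies them with $a_0 \Delta_z \ov V^N$, together with the $C^{-2}$-bound already obtained). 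The uniform time-Lipschitz regularity then passes to $U$ through $\hat V^N \to U$. The main obstacle is Step~2 --- the iterative pointwise bounds $\|D_{x^i}^j V^N\|_\infty \leq C_0/N$ uniformly in $N$ up to order $k$. These are obtained in Section~\ref{sec.uniformest} by a nonlinear-adjoint style argument: differentiating \eqref{hjbn} $j$ times in a single particle, pairing with an auxiliary Fokker--Planck density, and inductively bootstrapping $L^\infty$ bounds from weighted $L^2$ controls produced at earlier iterations, a scheme that crucially relies on the non-degeneracy of the idiosyncratic noise.
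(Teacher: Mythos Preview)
Your overall strategy---define $\hat V^N$, bound $\|\tfrac{\delta\hat V^N}{\delta m}(t,m,\cdot)\|_{C^k}$ via the pointwise estimates of Section~\ref{sec.uniformest}, and pass to the limit---is exactly the paper's route (see Proposition~\ref{prop.existsmooth}). But there is a genuine gap.

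The estimates in Section~\ref{sec.uniformest} (Theorem~\ref{thm:VNestimates} and Lemma~\ref{lem:induction}) require that $G$ and $H$ be $k$-times continuously differentiable \emph{in all variables, including the measure variable}. The hypotheses \eqref{glipk} and \eqref{hlipk} do not provide this: they give $C^k$ regularity of $H$ in $(x,p)$ and $C^{-k}$-Lipschitz regularity in $m$, but no differentiability in $m$, so one cannot even write down the terms $D_m H$, $\tfrac{\partial G}{\partial m}$, etc.\ that appear throughout the proof of Lemma~\ref{lem:induction}. Moreover, even when those estimates do apply, they deliver $\|D_{x^i}^j V^N\|_\infty \le C_0/N + C_1/N^{3/2}$, where only $C_0$ is controlled by the constants in \eqref{glipk}--\eqref{hlipk}; the constant $C_1$ depends on higher $m$-derivatives of $G,H$. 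The paper resolves this in two steps: first assume $G,H$ are smooth, apply Theorem~\ref{thm:VNestimates}, and pass $N\to\infty$ so that the $C_1/N^{3/2}$ term disappears, leaving the $C_0$-Lipschitz bound on any limit point; then remove the smoothness assumption by mollifying $G,H$ via Lemma~\ref{lem.mollification} and using that $C_0$ is stable under this approximation. Your write-up collapses these two steps into a single direct claim, which is not justified as stated.

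A smaller point: for the time-Lipschitz assertion you invoke a ``companion estimate'' $\|D_{x^i}D_{x^j}V^N\|_\infty \le C/N^2$ for $i\ne j$. Lemma~\ref{lem:induction} as written gives only $C_0/N + C_1/N^{3/2}$ for mixed multi-indices, not $1/N^2$, so summing over $i\ne j$ does not obviously stay bounded. The paper's own treatment of the $a_0\sum_{i,j}\tr(D_{x^ix^j}V^N)$ term in the proof of Proposition~\ref{prop.existsmooth} is terse on this point as well; if you want to rely on a $1/N^2$ bound for off-diagonal second derivatives, you would need to prove it separately (it is plausible from the structure of the Fa\`a di Bruno expansion, since the leading $\delta_{n_1=\cdots=n_k}$ terms vanish, but it is not stated).
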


\begin{rmk} \label{rmk.nocommon}
    It is clear from the arguments below that when $a_0 = 0$, there is no need to define viscosity (sub/super)solutions in terms of the change of variables \eqref{def.ovu}. More precisely, suppose that $a_0 = 0$ and define viscosity (sub/super)solutions as follows: 

    An upper semi-continuous function $V : [0,T] \times \cP(\T^d) \to \R \cup \{- \infty\}$ is said to be a subsolution of \eqref{hjb} if for any $\Phi \in C^{1,2}_{\text{p}}([0,T] \times \cP(\T^d))$ such that $U - \Phi$ attains a local maximum at $(t_0,m_0) \in (0,T) \times \cP(\T^d)$, we have
    \begin{align*}
        - \partial_t \Phi(t_0,m_0) - \int_{\T^d} \tr(D_x D_m \Phi)(t_0,m_0,x) m_0(dx) + \int_{\T^d} H(x,D_m \Phi(t_0,m_0,x),m_0) m_0(dx) \leq 0,
    \end{align*}
    where the space $C^{1,2}_{\text{p}}([0,T] \times \cP(\T^d))$ is defined similarly to $C^{1,2,2}_{\text{p}}$, i.e. it is the space of test functions admitting continuous derivatives $\partial_t \Phi$, $D_m \Phi$, $D_x D_m \Phi$. Define supersolutions and solutions similarly. Then, the conclusions of Theorems \ref{thm.comparison}, \ref{thm.convergence}, \ref{thm.exist}, \ref{thm.regularity} still hold with this more standard definition of viscosity (sub/super)solutions.
\end{rmk}

\subsection{Proof of Lemma \ref{lem.testfunctions}}

In order to show that the definition of viscosity solutions is insensitive to the class of test functions we employ, we will need to make use of the following mollification lemma, which follows along the lines of the construction given in \cite{cossomartini} (though the argument here is simpler since we work on the torus).

\begin{lem} \label{lem.c2moll}
    Let $\Phi \in \cpart$. There exists a sequence $(\Phi^N)_{N \in \N}$ with each $\Phi^N \in \mathcal{C}^{1,2,2}$ such that 
    \begin{enumerate}
        \item $\Phi^N \to \Phi$, $D_z \Phi^N \to D_z \Phi$, $D_{zz}^2 \Phi^N \to D_z^2 \Phi$, uniformly on $[0,T] \times \T^d \times \cP(\T^d)$, \\
        \item $D_m \Phi^N \to D_m \Phi$, $D_x D_m \Phi^N \to D_x D_m \Phi$, uniformly on  $[0,T] \times \T^d \times \cP(\T^d) \times \T^d$.
    \end{enumerate}
\end{lem}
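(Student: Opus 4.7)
The plan is a two-step mollification. First I will smooth $\Phi$ in the Euclidean variable $z$, and then approximate it in the measure variable by cylindrical functions of the form $\Psi(t,z,m) = \psi\bigl(t, z, \langle f_1, m\rangle, \ldots, \langle f_K, m\rangle\bigr)$ with $\psi$ and the $f_k$ smooth. Cylindrical functions are automatically in $\mathcal{C}^{1,2,2}$: a direct computation gives
\[
D_{mm}\Psi(t,z,m,y,y') = \sum_{j,k}\partial^2_{p_j p_k}\psi\bigl(t,z,\langle f_1, m\rangle, \ldots \bigr)\, Df_j(y)\otimes Df_k(y')^\top,
\]
which is jointly continuous in $(t,z,m,y,y')$. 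Crucially, the existence of $D_{mm}\Psi$ is automatic from the cylindrical structure and does \emph{not} require any second-order measure regularity of $\Phi$ itself.

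For the first step, with $\rho^\epsilon$ a standard smooth mollifier on $\T^d$, I set
\[
\Phi^\epsilon(t,z,m) := \int_{\T^d} \Phi(t, z - \xi, m)\rho^\epsilon(\xi)\, d\xi.
\]
Commuting each of the derivatives $\partial_t, D_z, D_{zz}^2, D_m, D_x D_m$ with the $\xi$-integration and using continuity of the corresponding derivatives of $\Phi$ on the compact product space $[0,T] \times \T^d \times \cP(\T^d)$ (or $\times\T^d$, as appropriate in the $\bd_1$ topology) yields $\Phi^\epsilon \in \cpart$ and uniform convergence as $\epsilon \to 0$ of $\Phi^\epsilon, D_z \Phi^\epsilon, D_{zz}^2 \Phi^\epsilon, D_m \Phi^\epsilon, D_x D_m \Phi^\epsilon$ to the corresponding quantities for $\Phi$.

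For the second step, I adapt the Hilbert-space lifting strategy of \cite{cossomartini}, which is particularly clean on the torus. Choose $s > d/2$ and embed $\cP(\T^d) \hookrightarrow H^{-s}(\T^d)$ continuously; parameterize $H^{-s}$ by Fourier coefficients. Extend $\Phi^\epsilon$ continuously to a neighborhood of the compact set $\cP(\T^d)$ in $H^{-s}$, and convolve the extension with a finite-dimensional Gaussian kernel in the first $K$ Fourier coordinates. The resulting function $\Phi^{\epsilon, K}(t,z,m) = \psi^K\bigl(t, z, (\langle e_k, m\rangle)_{|k|\leq K}\bigr)$ is cylindrical, hence in $\mathcal{C}^{1,2,2}$. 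The Fréchet derivatives on $H^{-s}$ coincide, when restricted to $\cP(\T^d)$, with the intrinsic derivatives of the paper, so the standard finite-dimensional mollification estimates produce uniform convergence of $\Phi^{\epsilon, K}$, $D_z \Phi^{\epsilon,K}$, $D_{zz}^2 \Phi^{\epsilon, K}$, $D_m \Phi^{\epsilon, K}$, $D_x D_m \Phi^{\epsilon, K}$ to the corresponding objects for $\Phi^\epsilon$ as $K \to \infty$. A diagonal subsequence $\Phi^N := \Phi^{\epsilon_N, K_N}$ with $\epsilon_N \to 0$ and $K_N \to \infty$ sufficiently slowly gives the sequence required by the lemma.

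The main technical obstacle is joint uniform convergence of $\Phi^{\epsilon,K}$ \emph{and} its first-order intrinsic measure derivatives: Stone–Weierstrass only yields density of cylindrical polynomials in $C(\cP(\T^d))$ and gives no automatic control of $D_m$ or $D_x D_m$. Lifting to the Hilbert space $H^{-s}$ circumvents this because Gaussian convolution there produces simultaneous convergence of the function and of all its Fréchet derivatives; one then identifies these Fréchet derivatives with the intrinsic measure derivatives on $\cP(\T^d)$, an identification which on $\T^d$ is explicit through the Fourier coordinates. This explicit Fourier description of both the embedding and the mollification is the simplification referenced by the remark that the argument here is simpler than in \cite{cossomartini}.
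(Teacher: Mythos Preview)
Your strategy differs from the paper's, and Step~2 has a genuine gap. The paper does not lift to $H^{-s}$ at all; it uses an empirical-measure projection: set $\Psi^N(t,z,\bx)=\Phi(t,z,m_{\bx}^N)$, mollify in $\bx\in(\T^d)^N$ to get $\Psi^{N,\epsilon}$, and define $\Phi^N(t,z,m)=\int_{(\T^d)^N}\Psi^{N,\epsilon_N}(t,z,\bx)\,m^{\otimes N}(d\bx)$. The chain rule $D_{x^i}\Phi(t,z,m_{\bx}^N)=\frac1N D_m\Phi(t,z,m_{\bx}^N,x^i)$ ties the $\bx$-derivatives of $\Psi^N$ directly to $D_m\Phi$, so convergence of $D_xD_m\Phi^N$ follows from an explicit computation with the modulus of continuity $\kappa$ of $D_m\Phi$ and the tuning $\epsilon_N^{-1}\kappa(1/N)\to 0$. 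No extension off $\cP(\T^d)$ is ever needed; this is also what \cite{cossomartini} do.

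In your Step~2 you extend $\Phi^\epsilon$ only \emph{continuously} to a neighborhood of $\cP(\T^d)$ in $H^{-s}$ and then Gaussian-mollify in the first $K$ Fourier modes. A continuous extension carries no derivative information, so the Fr\'echet gradient of the mollified cylindrical function has no reason to approximate $\frac{\delta\Phi^\epsilon}{\delta m}$; the sentence ``standard finite-dimensional mollification estimates produce uniform convergence of \dots $D_m\Phi^{\epsilon,K}$, $D_xD_m\Phi^{\epsilon,K}$'' is precisely the missing step. You cannot localize away from the boundary, since $\cP(\T^d)$ has empty interior in $H^{-s}$. Even granting a Fr\'echet-$C^1$ extension there is a mismatch of scales: your cylindrical $\Phi^{\epsilon,K}$ has linear derivative equal to a trigonometric polynomial of degree $\le K$, so uniform-in-$x$ convergence of $D_xD_m\Phi^{\epsilon,K}$ amounts to uniform convergence of Fourier partial sums of the merely continuous function $x\mapsto D_xD_m\Phi^\epsilon(t,z,m,x)$, which fails in general. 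Put differently, identifying Fr\'echet derivatives with $D_xD_m$ uniformly needs $s>d/2+2$, while $\frac{\delta\Phi^\epsilon}{\delta m}(m,\cdot)\in C^2\subset H^2$ forces $s\le 2$; no single $s$ serves both purposes. Your final paragraph claims the Hilbert lifting ``circumvents'' exactly this obstacle, but it does not: the Gaussian convolution on $H^{-s}$ gives simultaneous convergence of Fr\'echet derivatives only in the $H^s$ topology, which is too weak here.
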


\begin{proof}
    Let $\kappa$ be a concave modulus of continuity such that 
    \begin{align}\label{kappamod}
        |D_m \Phi(t,z,m,x) - D_m \Phi(t,z,m',x')| &\leq \kappa\big( \bd_1(m,m') + |x - x'| \big), \nonumber \\
        |D_x D_m \Phi(t,z,m,x) - D_x D_m \Phi(t,z,m',x')| &\leq \kappa\big( \bd_1(m,m') + |x - x'| \big)
    \end{align}
    for each $t \in [0,T]$, $z,x,x' \in \T^d$, $m,m' \in \cP(\T^d)$.
    In order to construct $\Phi^N$, we first define for each $N \in \N$, $\Psi^N : [0,T] \times \T^d \times (\T^d)^N \to \R$ by 
    \begin{align*}
        \Psi^N(t,z,\bx) = \Phi(t,z,m_{\bx}^N). 
    \end{align*}
    Then let $(\rho_{\epsilon})_{\epsilon > 0}$ be a standard mollifier on $\R^d$, i.e. $\rho_{\epsilon}(x) = \epsilon^{-d} \rho(x/\epsilon)$ for some smooth, non-negative $\rho : \R^d \to \R$ such that $\int_{\R^d} \rho(x) dx = 1$ and $\text{supp}(\rho) \subset B_{1} \subset \R^d$, and set
    \[
       \boldsymbol{\rho}^N_\epsilon(\bx) = \prod_{k=1}^N \rho_\epsilon(x^k). 
   \]
    Define for each $\epsilon > 0$
    \begin{align*}
        \Psi^{N,\epsilon}(t,z,\bx) = \int_{(\R^d)^N} \Psi^N(t,z,\by) \bm{\rho}_\epsilon^N(\bx - \by) d\by. 
    \end{align*}
    Next, define
    \begin{align*}
        \Phi^N(t,z,m) = \int_{(\T^d)^N} \Psi^{N,\epsilon_N}(t,z,\bx) m^{\otimes N}(d\bx),
    \end{align*}
    where $\epsilon_N \downarrow 0$ are chosen so that 
    \begin{align*}
        \epsilon_N^{-1} \kappa(1/N) \xrightarrow{N \to \infty} 0.
    \end{align*}
    It is clear that $\Phi^N$ is smooth in $m$, since $\Psi^{N,\epsilon}$ is smooth in $\bx$, and so one checks easily that $\Phi^N \in C^{1,2,2}$. We now verify the convergence statements in (1) and (2). In fact, we focus on verifying 
    \begin{align*}
        D_x D_m \Phi^N \to D_x D_m \Phi,
    \end{align*}
    since this is the most challenging convergence statement to verify, and the convergence of the other derivatives can be established along similar lines.

    First, we compute 
    \begin{align} \label{dxdmcomp}
        D_m \Phi^N(t,z,m,x) &= \sum_{i = 1}^N \int_{(\T^d)^{N-1}} D_{x^i} \Psi^{N,\epsilon_N}\big(t,z, (\by^{-i}, x) \big) dm^{\otimes (N-1)} (d \by^{-i}), \nonumber \\
        D_x D_m \Phi^N(t,z,m,x) &= \sum_{i = 1}^N \int_{(\T^d)^{N-1}} D_{x^ix^i} \Psi^{N,\epsilon_N}\big(t,z, (\by^{-i}, x) \big) dm^{\otimes (N-1)} (d \by^{-i}).
    \end{align}
    Next, we notice that
    \begin{align*}
        D_{x^ix^i} \Psi^{N,\epsilon}(t,z,\bx) &= \frac{1}{N} \int_{(\T^d)^N} D_m \Phi(t,z,m_{\by}^N,y^i)  \otimes  D_{x^i} \bm{\rho}^N_\epsilon(\bx - \by) d\by\\
        &= \frac{1}{N} \int_{(\T^d)^N} D_m \Phi(t,z,m_{\by^{-i}}^{(N-1)},y^i) \otimes D_{x^i} \bm{\rho}^N_\epsilon(\bx - \by) d\by
        + r^{N,\epsilon}(t,z,\bx),
    \end{align*}
    where, in view of \eqref{kappamod},
    \[
        |r^{N,\epsilon}(t,z,\bx)| \le\frac{1}{N} \epsilon^{-1} \kappa(1/N).
    \]
    Therefore,
    \begin{align} \label{technical}
        |D_{x^ix^i} &\Psi^{N,\epsilon}(t,z,\bx) - \frac{1}{N} D_x D_m \Phi(t,z,m_{\bx}^N,x^i)| \leq 
       \frac{1}{N} \epsilon^{-1} \kappa(1/N) \nonumber  \\
       &+ \frac{1}{N} \bigg|  \int_{(\T^d)^N} D_m \Phi(m_{\by^{-i}}^{(N-1)},y^i) D_{x^i} \bm{\rho}^N_\epsilon(\bx - \by) d\by 
       - D_xD_m \Phi(t,z,m_{\bx}^N,x^i) \bigg| \nonumber  \\ 
       &= \frac{1}{N} \epsilon^{-1} \kappa(1/N) + \frac{1}{N} \bigg|  \int_{(\T^d)^N} D_x D_m \Phi(t,z,m_{\by^{-i}}^{(N-1)},y^i) \bm{\rho}_{\epsilon}^N(\bx - \by)  d\by 
       - D_x D_m \Phi(t,z,m_{\bx}^N,x^i) \bigg| \nonumber \\
       &\leq \frac{C}{N} \bigg(\epsilon^{-1} \kappa(1/N) +  \kappa\big(C/N + C\epsilon \big)\bigg), 
    \end{align}
    where $C$ is independent of $\epsilon$ and $N$ and we have used the fact that $\text{supp } \rho_{\epsilon} \subset B_{\epsilon}$, $\| D_{x^i} \rho_{\epsilon} \|_{L^1} \leq C/\epsilon$, and the fact that $\bd_1(m_{\bx}^N, m_{\bx^{-i}}^{N-1}) \leq C/N$. To complete the proof, we combine \eqref{dxdmcomp} and \eqref{technical} to get 
    \begin{align*}
        |D_x D_m& \Phi(t,z,m,x) - D_x D_m \Phi^N(t,z,m,x)| \leq C \Big(\epsilon_N^{-N} \kappa(1/N) +  \kappa\big(C/N + C\epsilon_N \big)\Big) \\
        &+ \frac{1}{N} \sum_{i = 1}^N \bigg| \int_{(\T^d)^{N-1}} D_x D_m \Phi(t,z,m^{N}_{(\by^{-i},x)},x)  dm^{\otimes (N-1)} (\by^{-i}) - D_x D_m \Phi(t,z,m,x) \bigg| \\
        &\leq C \Big(\epsilon_N^{-1} \kappa(1/N) +  \kappa\big(C/N + C\epsilon_N \big) + \kappa(r_{N-1,d}) \Big), 
    \end{align*}
    where \begin{align*}
        r_{N,d} = \begin{cases}
            N^{-1/d} & d \geq 3 \\
            N^{-1/2} \log(N) & d = 2 \\
            N^{-1/2} & d = 1,
        \end{cases}
    \end{align*} and we have used the concavity of $\kappa$ and the main results of \cite{fournier2015rate} to obtain the final estimate. In light of the fact that $\epsilon_N^{-N} \kappa(1/N) \to 0$ and $\epsilon_N \to 0$ by construction, this completes the proof that $D_x D_m \Phi^N \to D_x D_m \Phi$ uniformly.
\end{proof}

We now complete the proof of Lemma \ref{lem.testfunctions}.

\begin{proof}[Proof of Lemma \ref{lem.testfunctions}]
    The fact that we can restrict our attention to strict global maxima is justified by standard arguments, e.g. replacing a test function $\Phi$ with one of the form 
    \begin{align*}
        (t,z,m) \mapsto \Phi(t,z,m) + \frac{1}{\epsilon} \Big(\| m - m_0\|_{-k}^2 + |z - z_0|^4 + |t - t_0|^2 \Big)
    \end{align*}
    for some large $k \in \N$ and small $\epsilon > 0$, where $\| \cdot \|_{-k}$ denotes the negative Sobolev norm introduced in subsection \ref{sec.comparison}, and we use the fact that, for $k$ sufficiently large, $\left\{ m \mapsto \norm{m - m_0}^{2}_{-k} \right\} \in \mathcal C^1(\mathcal P(\T^d))$ and 
    \[
        D_m \left[\norm{m - m_0}^{2}_{-k}\right](x)
        = D_x D_m \left[\norm{m - m_0}^{2}_{-k}\right](x) \equiv 0 \quad \text{at }m = m_0.
    \]
    The fact that we can restrict to $\Phi \in C^{1,2,2}$ is then easily justified using Lemma \ref{lem.c2moll}.
\end{proof}

\section{Uniform estimates for \eqref{hjbn}} \label{sec.uniformest}

The purpose of this section is to establish precise estimates on the derivatives of the solution $V^N$ of \eqref{hjbn} that will be used to prove $C^{-k}$-Lipschitz-regularity for the limiting equation. For the rest of this section, we use the notation
\begin{equation}\label{R}
    R^* := e^{2C_H T}C_G, \quad C_{H,k} \coloneqq C_{H,k,R^*},
\end{equation}
where $C_H$, $C_G$, and $C_{H,k,R}$ are as in Assumption \ref{assump.d1} and \eqref{hlipk}.

In order to state the result, we first prove the following.

\begin{lem}\label{lem:VnLip}
    Suppose that Assumption \ref{assump.d1} holds. Then, for all $N$ and $t \in [0,T]$, we have
    \[
        \max_{k =1,\ldots,N} \norm{D_{x^k}V^N(t,\cdot)}_\infty \le \frac{1}{N}R^*.
    \]
    In addition, for all $0 \leq s < t \leq T$, $x \in \T^d$, we have
    \[
    |V^N(t,x) - V^N(s,x)| \leq C \sqrt{t-s}, 
    \]
    where $C$ depends only on $R^*$ and $C_{H,0} = \|H\|_{\linf(\T^d \times B_{R^*} \times \cP(\T^d))}$.
\end{lem}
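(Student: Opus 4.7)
My plan is to prove the spatial Lipschitz bound first, and then deduce the time Hölder estimate using it.

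\textbf{Step 1 (Spatial bound).} I would first reduce to the case that $H$ and $G$ are smooth: mollifying the data to $(H_\varepsilon, G_\varepsilon)$ satisfying Assumption \ref{assump.d1} with constants uniform in $\varepsilon$ produces classical solutions $V^{N,\varepsilon}$ of \eqref{hjbn} converging uniformly to $V^N$ by standard stability of finite-dimensional viscosity solutions, so it suffices to establish the bound for smooth data, uniformly in $\varepsilon$. In that case, set $W^k := D_{x^k} V^N$ and differentiate \eqref{hjbn} in $x^k$ to obtain the linear parabolic equation
\[
   -\partial_t W^k - \smallsum_i \Delta_{x^i} W^k - a_0 \smallsum_{i,j} \tr(D^2_{x^i x^j} W^k) + \smallsum_i D_p H^i \cdot D_{x^i} W^k = F^k,
\]
where $H^i := H(x^i, NW^i, m_{\bx}^N)$ and
\[
   F^k := -\tfrac{1}{N} D_x H(x^k, NW^k, m_{\bx}^N) - \tfrac{1}{N^2}\smallsum_i D_m H(x^i, NW^i, m_{\bx}^N)(x^k),
\]
with terminal condition $W^k(T,\bx) = \tfrac{1}{N} D_m G(m_{\bx}^N)(x^k)$. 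Assumption \ref{assump.d1} gives $|D_x H|, |D_m H| \le C_H(1+|p|)$, hence $\norm{N F^k(s,\cdot)}_\infty \le 2C_H(1 + M(s))$ with $M(s) := \max_k \norm{N W^k(s,\cdot)}_\infty$, and $\norm{N W^k(T,\cdot)}_\infty \le C_G$ by $C_G$-Lipschitzness of $G$ in $\bd_1$. Writing $W^k$ via Feynman-Kac for the linear PDE (drift $D_p H^i$, diffusion given by the backward generator of \eqref{hjbn}) yields
\[
   M(t) \leq C_G + 2C_H \int_t^T (1 + M(s))\, ds,
\]
and Grönwall's inequality delivers $M(t) \le R^*$ for the chosen $R^*$.

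\textbf{Step 2 (Time Hölder bound).} With the spatial bound in hand, the nonlinearity is bounded: $\bigl|\tfrac{1}{N}\smallsum_i H^i\bigr| \leq C_{H,0}$. I would then view \eqref{hjbn} as a linear backward heat equation with bounded forcing and apply Duhamel's formula: for $t < s$,
\[
   V^N(t,\bx) = P_{s-t}[V^N(s,\cdot)](\bx) + \int_t^s P_{r-t}\bigl[\tfrac{1}{N}\smallsum_i H^i(r,\cdot)\bigr](\bx)\, dr,
\]
where $(P_\tau)$ is the semigroup generated by $\smallsum_i \Delta_{x^i} + a_0 \smallsum_{i,j} \tr(D^2_{x^i x^j})$. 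The second term is bounded by $C_{H,0}\,|s-t|$. For the first, since $\norm{D_{x^i} V^N(s,\cdot)}_\infty \le R^*/N$ by Step 1, I estimate
\[
  \bigl|P_{s-t}[V^N(s,\cdot)](\bx) - V^N(s,\bx)\bigr| \leq \smallsum_i \norm{D_{x^i} V^N(s)}_\infty \cdot \E|X^{i;0,\bx}_{s-t} - x^i| \leq \smallsum_i \tfrac{R^*}{N}\cdot C\sqrt{s-t} = C R^* \sqrt{s-t},
\]
where the $N$-dependence cancels due to the per-coordinate Lipschitz scaling. Combining the two yields $|V^N(t,\bx) - V^N(s,\bx)| \leq C\sqrt{s-t}$ with $C$ depending only on $R^*$ and $C_{H,0}$ (and on $a_0$, $d$), as required.

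\textbf{Main obstacle.} The principal technical point is making the differentiation in Step 1 rigorous when $H$ and $G$ are merely Lipschitz. This is handled by the mollification argument above, exploiting the fact that \eqref{hjbn} is a finite-dimensional equation whose standard comparison principle transfers the $\varepsilon$-uniform Lipschitz bound from $V^{N,\varepsilon}$ to $V^N$. The remaining steps—Feynman-Kac, Grönwall, and the heat-kernel smoothing estimate—are classical.
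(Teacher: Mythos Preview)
Your proof is correct and follows essentially the same strategy as the paper: differentiate \eqref{hjbn} in $x^k$, bound the source term using \eqref{assump.H}, close via Gr\"onwall, and then use Duhamel's formula together with the per-coordinate Lipschitz bound to obtain the $\sqrt{t-s}$ time regularity. The only differences are cosmetic: the paper obtains the Gr\"onwall inequality by applying the maximum principle to $W = \max_k |D_{x^k}V^N|^2$ (a Bernstein-type argument exploiting the inequality $\mathcal{L}^N v^2 \ge 2v\mathcal{L}^N v + 2|D_\bx v|^2$), whereas you use the Feynman--Kac representation for $W^k$ directly and take suprema to get the integral form of the inequality; and you make explicit the mollification step needed to justify differentiating when $H$ and $G$ are merely Lipschitz, which the paper leaves implicit. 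One minor point: your integral inequality $M(t) \le C_G + 2C_H\int_t^T(1+M(s))\,ds$ yields $M(t) \le (1+C_G)e^{2C_H(T-t)} - 1$, which is slightly larger than the stated $R^* = e^{2C_H T}C_G$, so the exact constant does not quite match the lemma as written (the paper's argument has a similar looseness with the additive constant).
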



Here and in the rest of this section, we introduce the elliptic, self-adjoint operator
\[
    \mathcal L^N  = \sum_{i=1}^N \Delta_{x^i} + a_0 \sum_{i,j=1}^N \tr[D_{x^i x^j}],
\]
and we note that, for any smooth scalar function $v: (\T^d)^N \to \R$,
\begin{equation}\label{quad:gain}
    \mathcal L^N v^2 \ge 2v \mathcal L^N v + 2 |D_{\bx}v|^2
\end{equation}
(equality holds if $a_0 =0$).

\begin{proof}[Proof of Lemma \ref{lem:VnLip}]
    Observe first that 
    \begin{equation}\label{DG:bound}
        \norm{D_{x^k} V^N(T,\cdot)}_\infty \le \frac{C_G}{N}.
    \end{equation}
    Set $V^{N,k} = D_{x^k}V^N$. Differentiating \eqref{hjbn} in $x^k$ yields
    \begin{align}
        -\partial_t V^{N,k} &- \mathcal L^N V^{N,k} - \sum_{i=1}^N \left[ D_p H(x^i, ND_{x^i}V^N, m^N_\bx)\cdot D_{x^i}\right] V^{N,k} \notag \\
        &= - \frac{1}{N} D_x H(x^k, ND_{x^k}V^N,m^N_\bx) 
        - \frac{1}{N^2}\sum_{i=1}^N D_m H(x^i, ND_{x^i}V^N, m^N_\bx, x^n).\label{VNkeq}
    \end{align}

    
    In view of \eqref{assump.H}, 
    \begin{align*}
        &\abs{ \frac{1}{N} D_x H(x^k, ND_{x^k}V^N,m^N_\bx) +
        \frac{1}{N^2}\sum_{i=1}^N D_m H(x^i, ND_{x^i}V^N, m^N_\bx, x^k)}\\
        &\le C_H\left( |D_{x^k}V^N| + \frac{1}{N}\sum_{i=1}^N |D_{x^i}V^N| \right)
        \le 2C_H \max_{k = 1,\ldots,N} |V^{N,k}|.
    \end{align*}
    Let $W = \max_{k=1,\ldots,N} |V^{N,k}|^2$. Then, by \eqref{quad:gain} and the convexity of the $\max$ function, it follows that
    \[
        -\partial_t W - \mathcal L^N W + \sum_{i=1}^N D_p H(x^i, ND_{x^i}V^N, m^N_\bx)\cdot D_{x_i}W \le 4C_H W.
    \]
    The estimate on $\max_{k} \|D_{x^k} V^N(t,\cdot)\|_{\infty}$ now follows from the maximum principle and Gr\"onwall's inequality.

    To complete the proof, let $(P^N_h)_{h > 0}$ denote the solution operator associated to the operator $\mathcal{L}^N$, and observe that, for some constant $C > 0$ depending only on $a_0$ and $d$ (but not on $N$), for any smooth function $V: (\T^d)^N \to \R$ and $h > 0$,
    \[
        \norm{P^N_h V - V}_\infty \le CNh^{1/2} \sup_{k=1,\ldots,N} \norm{D_{x^k}V}_\infty.
    \]
    From the Lipschitz estimate proved above, we have
    \[
        -\partial_t V^N - \mathcal{L}^N V^N = f^N
    \]
    for some $f^N: (\T^d)^N \to \R$ satisfying $\norm{f^N}_\infty \le C_{H,0}$. By Duhamel's formula, for any $t_0 \in [0,T)$ and $h \in (0,T-t_0)$,
    \begin{align*}
        V^N(t_0,\cdot) = P^N_{h} V^N(t+h, \cdot) + \int_{t_0}^{t_0+h} P^N_{t - t_0}f^N dt, 
    \end{align*}
    and therefore, in view of the Lipschitz estimate,
    \begin{align*}
        |V^N(t_0,\cdot) - V^N(t_0 + h, \cdot)| \leq C_{H,0} h + |P^N_h  V^N(t+h, \cdot) - V^N(t+h,\cdot)| \leq C \sqrt{h}, 
    \end{align*}
    with $C$ as described in the statement of the lemma.
\end{proof}

We can now state the main result of this section.

\begin{thm}\label{thm:VNestimates}
    In addition to Assumption \ref{assump.d1}, assume that, for some $k \ge 1$, $G$ and $H$ are $k$-times continuously differentiable in all variables, and satisfy \eqref{glipk} and \eqref{hlipk}. Then there exist constants $C_0 = C_{0,k} > 0$ and $C_1 = C_{1,k} > 0$ independent of $N$ such that $C_{0}$ depends on the data only through the quantities $C_{H,k}$ and $C_{G,k}$, such that, for any $N \in \N$ and $n \in \{1,2,\ldots,N\}$,
    \[
        \norm{D_{x^n}^k V^N}_\infty \le \frac{C_0}{N} + \frac{C_1}{N^{3/2}}.
    \]
\end{thm}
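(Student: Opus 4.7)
The proof proceeds by strong induction on $k \ge 1$, the base case $k = 1$ being Lemma \ref{lem:VnLip} (with $C_{0,1} = R^{\ast}$ and $C_{1,1} = 0$). The induction hypothesis will in fact be a pair of statements: the $L^{\infty}$ bound above for every $j \in \{1,\ldots,k-1\}$, together with a companion $\mu$-averaged $L^{2}$ estimate
\[
\int_{t_0}^{T}\int_{(\T^{d})^{N}}\frac{1}{N}\sum_{i=1}^{N}\bigl|N D_{x^{i}}D_{x^{n}}^{j}V^{N}\bigr|^{2}\mu(t,d\bx)\,dt \,\le\, C_{j},
\]
where $\mu$ is the adjoint Fokker--Planck measure introduced below and $C_{j}$ is independent of $N$ and of the starting point. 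To run the inductive step, set $W^{k} := D_{x^{n}}^{k}V^{N}$ and differentiate \eqref{hjbn} $k$ times in $x^{n}$, obtaining
\[
-\partial_{t}W^{k} - \mathcal{L}^{N}W^{k} - \sum_{i=1}^{N}D_{p}H(x^{i},ND_{x^{i}}V^{N},m_{\bx}^{N})\cdot D_{x^{i}}W^{k} \,=\, F^{N,k}(t,\bx),
\]
with terminal condition $W^{k}(T,\bx) = D_{x^{n}}^{k}G(m_{\bx}^{N})$. Here $F^{N,k}$ collects all lower-order terms generated by iterated Leibniz/chain rule applied to $\tfrac{1}{N}\sum_{j}H(x^{j},ND_{x^{j}}V^{N},m_{\bx}^{N})$; schematically each summand has the form
\[
\frac{1}{N^{1+\abs{\alpha}}}\,D_{x}^{a}D_{p}^{b}D_{m}^{\abs{\alpha}}H\bigl(x^{i},ND_{x^{i}}V^{N},m_{\bx}^{N};x^{i_{1}},\ldots,x^{i_{\abs{\alpha}}}\bigr)\prod_{\ell}\bigl(ND_{x^{j_{\ell}}}^{k_{\ell}}V^{N}\bigr),
\]
with $a+b+\sum_{\ell}k_{\ell} = k$ and each $k_{\ell} < k$.

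The central device is a nonlinear-adjoint duality argument. For each fixed $(t_{0},\bx_{0}) \in [0,T]\times(\T^{d})^{N}$, let $\mu = \mu^{t_{0},\bx_{0}}$ solve the forward equation
\[
\partial_{t}\mu - \mathcal{L}^{N}\mu + \sum_{i=1}^{N}\operatorname{div}_{x^{i}}\!\bigl(D_{p}H(x^{i},ND_{x^{i}}V^{N},m_{\bx}^{N})\,\mu\bigr) = 0,\qquad \mu(t_{0},\cdot) = \delta_{\bx_{0}},
\]
on $[t_{0},T]$. Since $\mathcal{L}^{N}$ is self-adjoint on the torus, pairing this with the equation for $W^{k}$ yields
\[
W^{k}(t_{0},\bx_{0}) \,=\, \int_{(\T^{d})^{N}}D_{x^{n}}^{k}G(m_{\bx}^{N})\,\mu(T,d\bx) \,+\, \int_{t_{0}}^{T}\!\!\int_{(\T^{d})^{N}}F^{N,k}(t,\bx)\,\mu(t,d\bx)\,dt.
\]

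I would then split the right-hand side into a \emph{good} part, containing only terms with at most one measure derivative on $H$ (resp.\ $G$), and a \emph{bad} part, containing everything else. The good terms are bounded in $L^{\infty}$ through \eqref{glipk}--\eqref{hlipk}, together with the inductive $L^{\infty}$ control on $D_{x^{n}}^{j}V^{N}$ for $j < k$, and the explicit $N^{-1}$ prefactor; this produces exactly the $C_{0}/N$ contribution, where $C_{0}$ depends only on $C_{H,k}$ and $C_{G,k}$. For the bad part ($\abs{\alpha} \ge 2$), the key observation is that the sampling indices $x^{i_{1}},\ldots,x^{i_{\abs{\alpha}}}$ are averaged against $\mu$. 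Combining inductive $L^{\infty}$ control on most factors with Cauchy--Schwarz against the companion $L^{2}$ estimate on one or two factors, together with the explicit $N^{-1-\abs{\alpha}}$ prefactor and a combinatorial count of admissible index tuples and coincidences, generates the missing $N^{-1/2}$ gain over the naive $N^{-1}$ bound and yields the $C_{1}/N^{3/2}$ contribution. The higher smoothness bounds coming from \eqref{glipk}--\eqref{hlipk} enter only through $C_{1}$.

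Finally, to close the induction I would upgrade the pointwise bound just obtained into the companion $L^{2}$ estimate at level $k$, needed to run the argument at step $k+1$. This is achieved by a standard energy-type computation: multiply the equation for $W^{k}$ by $W^{k}\mu$, integrate by parts in $\bx$, and exploit inequality \eqref{quad:gain} to extract the dissipation $\int |D_{\bx}W^{k}|^{2}\mu$, whose right-hand side is controlled by the pointwise bound on $W^{k}$ together with the terminal data estimate from \eqref{glipk}. The principal obstacle I anticipate is not any single estimate but rather the combinatorial and algebraic bookkeeping of the bad terms: one must verify that every product generated by iterated differentiation of the fully nonlinear coupling $H(x^{i},ND_{x^{i}}V^{N},m_{\bx}^{N})$ admits a factorization allowing the right sampling factors to be paired against the inductive $L^{2}$ estimate (with the correct adjoint measure $\mu$, whose drift itself depends on $V^{N}$), thereby genuinely extracting the $N^{-1/2}$ gain, and that the separation of constants into ``first-measure-derivative'' (contributing to $C_{0}$) versus ``higher-measure-derivative'' (contributing to $C_{1}$) pieces is preserved along the induction.
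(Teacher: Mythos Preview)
Your overall architecture is correct and matches the paper's proof: strong induction carrying both an $L^\infty$ bound and a companion $L^2(\mu)$ estimate, duality against the adjoint Fokker--Planck measure, and an energy identity using \eqref{quad:gain} to propagate the $L^2$ bound. Two points, however, would make the argument fail as written.

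First, your good/bad split is by the number $|\alpha|$ of \emph{measure} derivatives of $H$, whereas the decisive parameter is the number $b$ of $p$-derivatives. Terms with $|\alpha|\ge 2$ are in fact harmless: each $m$-derivative contributes a factor $1/N$, so the prefactor $N^{-1-|\alpha|}$ already beats the target (and there are no ``sampling indices'' to average---differentiating only in $x^n$ forces every auxiliary $m$-variable to equal $x^n$). The genuinely hard terms lie in your ``good'' class: those with $|\alpha|=0$ but $b\ge 2$, of the form
\[
N^{\,b-1}\sum_{i=1}^N D_p^{\,b}H\;\bigotimes_{\ell=1}^{b} D_{x^i}D_{x^n}^{k_\ell}V^N.
\]
Even granting an $L^\infty$ bound $|D_{x^i}D_{x^n}^{k_\ell}V^N|\le C/N$, the sum over $i$ leaves this $O(1)$, not $O(1/N)$. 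The paper's remedy is to keep \emph{two} of the factors and use the $L^2$ inductive hypothesis $\int\sum_i|D_{x^i}V^{N,\bm n'}|^2\mu\lesssim (C_0/N+C_1/N^{3/2})^2$; then $N^{\,b-1}\cdot(\text{$L^\infty$ on $b-2$ factors})\cdot\int\sum_i|D_{x^i}V^{N,\bm n'}|^2\mu$ yields the correct $C_0/N+C_1/N^{3/2}$, with leading constant depending only on $C_{H,k}$ and the inductive $C_0$'s. Separately, the terms with exactly one $p$-derivative combined with at least one $x$- or $m$-derivative are handled by Cauchy--Schwarz against the $L^2$ bound and contribute $C_1/N^{3/2}$ (their coefficients involve mixed derivatives such as $D_pD_mH$, not controlled by $C_{H,k}$ alone, which is why they feed $C_1$ rather than $C_0$).

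Second, and relatedly, the induction cannot be run only for pure derivatives $D_{x^n}^k$. Each $p$-derivative pulls out a factor $D_{x^i}D_{x^n}^{k_\ell}V^N$ with the summation index $i$, so mixed multi-indices appear at once. The paper therefore proves the stronger statement (its Lemma~\ref{lem:induction}) for arbitrary multi-indices $\bm n=(n_1,\dots,n_k)$, with both the $L^\infty$ and the $L^2$ hypotheses stated in that generality; Theorem~\ref{thm:VNestimates} is then the diagonal case $n_1=\cdots=n_k$. Your $L^2$ hypothesis already carries one mixed index, but your $L^\infty$ hypothesis does not, and both are needed in mixed form to close the loop.
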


\begin{rmk}
    We note that the constant $C_1$ is allowed to depend on bounds for derivatives of $H$ and $G$ in the measure variable that are higher than first-order. However, it is crucial for our main propagation-of-regularity result (Theorem \ref{thm.regularity}) that $C_0$ only depend on $C_{H,k}$ and $C_{G,k}$, which bound first-order derivatives of $H$ and $G$ in $m$. Then, as $N \to \infty$, the influence of the ``bad'' constant $C_1$ vanishes; indeed, see the proof of Proposition \ref{prop.existsmooth} below.
\end{rmk}


We define the smooth vector field $\bm{b}^N: [0,T] \times (\T^d)^N \to (\T^d)^N$ by
\[
    \bm b^N(t,\bx) =  (- D_p H(x^i, ND_{x^i}V^N(t,\bx),m^N_\bx) )_{i=1}^N,
\]
which is uniformly bounded in view of Lemma \ref{lem:VnLip}.

Given $t_0 \in [0,T]$ and $f \in C_c^\infty((\T^d)^N)$, denote by $\phi = \phi^{N,t_0,f}: [t_0,T] \times (\T^d)^N \to \R$ the smooth solution of the Fokker-Planck equation
\begin{equation}\label{dual:FP}
    \begin{cases}
        \ds \partial_t \phi - \mathcal L^N \phi + \div_{\bx} ( \bm b^N(t,\bx) \phi) = 0,& t \in [t_0,T]\\
        \phi(t_0,\cdot) = f.
    \end{cases}
\end{equation}
Then the following duality result is established through a simple computation.

\begin{lem}\label{lem:duality}
    Let $r \in L^\infty_+([0,T] \times (\T^d)^N)$ and let $W \in C([0,T] \times (\T^d)^N)$ be nonnegative and satisfy
    \[
        -\partial_t W - \mathcal L^N W - \bm b^N(t,\bx) \cdot D_\bx W \le r \quad \text{in } [0,T] \times (\T^d)^N
    \]
    in the sense of distributions. Then, for every $t_0 \in [0,T]$ and $f \in L^1_+((\T^d)^N)$,
    \[
        \int_{(\T^d)^N} W(t_0,\bx)f(\bx)d\bx \le \int_{(\T^d)^N} W(T,\bx)\phi^{N,t_0,f}(T,\bx)d\bx + \int_{t_0}^T \int_{(\T^d)^N} r(t,\bx) \phi^{N,t_0,f}(t,\bx)d\bx dt.
    \]
\end{lem}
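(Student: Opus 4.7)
The plan is to exploit the fact that the Fokker--Planck equation \eqref{dual:FP} is precisely the formal $L^2$-adjoint of the linear operator
\[
    L W := -\partial_t W - \mathcal{L}^N W - \bm b^N(t,\bx) \cdot D_{\bx} W
\]
appearing in the hypothesis, so the computation reduces to pairing the inequality $LW \le r$ with $\phi$ and integrating over $[t_0,T] \times (\T^d)^N$. Under this pairing, integration by parts will cancel all bulk terms thanks to \eqref{dual:FP}, leaving only the time-boundary contributions at $t = t_0$ (giving $\int W(t_0)f\, d\bx$) and at $t = T$ (giving $-\int W(T)\phi(T)\, d\bx$). The assumption $f \ge 0$, together with the maximum principle for the Fokker--Planck equation (whose drift $\bm b^N$ is smooth and bounded by Lemma \ref{lem:VnLip}), ensures $\phi \ge 0$ on $[t_0,T] \times (\T^d)^N$; this is what preserves the direction of the inequality when passing from $LW \le r$ to its integrated form.

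Concretely, if $W$ were smooth, integration by parts in $t$ would give
\[
    \int_{t_0}^T \int (-\partial_t W)\phi\, d\bx\, dt = \int \bigl[W(t_0,\bx)f(\bx) - W(T,\bx)\phi(T,\bx)\bigr]d\bx + \int_{t_0}^T \int W\, \partial_t \phi\, d\bx\, dt,
\]
while the self-adjointness of $\mathcal{L}^N$ and integration by parts in $\bx$ against the drift (with no spatial boundary terms since $(\T^d)^N$ has none) would yield
\[
    \int_{t_0}^T \int (-\mathcal{L}^N W)\phi\, d\bx\, dt = \int_{t_0}^T \int W(-\mathcal{L}^N \phi)\, d\bx\, dt
\]
and
\[
    \int_{t_0}^T \int (-\bm b^N \cdot D_\bx W)\phi\, d\bx\, dt = \int_{t_0}^T \int W\, \div_{\bx}(\bm b^N \phi)\, d\bx\, dt.
\]
Summing the three identities and invoking \eqref{dual:FP} to cancel the bulk integral containing $\partial_t \phi - \mathcal{L}^N \phi + \div_\bx(\bm b^N \phi)$, one arrives at
\[
    \int W(t_0)f\, d\bx - \int W(T)\phi(T)\, d\bx = \int_{t_0}^T \int (LW)\phi\, d\bx\, dt \le \int_{t_0}^T \int r\phi\, d\bx\, dt,
\]
which rearranges to the claimed inequality.

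The only subtlety is that $W$ is merely continuous and the inequality for $LW$ holds only distributionally, so the integration by parts is not immediately justified. The standard remedy is to mollify $W$ in the spatial variable, producing $W_\varepsilon$ that is smooth in $\bx$ and still continuous in $t$; since $\mathcal{L}^N$ has constant coefficients and $\bm b^N$ is smooth, $W_\varepsilon$ satisfies $LW_\varepsilon \le r_\varepsilon + e_\varepsilon$, where the commutator error $e_\varepsilon$ coming from the drift term vanishes in $L^1_{\text{loc}}$ as $\varepsilon \to 0$. Applying the computation above to $W_\varepsilon$ against the smooth $\phi$ and passing to the limit, using continuity of $W$ in $t$ to handle the boundary contributions and $r \in L^\infty$ to control the right-hand side, yields the lemma. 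I do not anticipate any conceptual obstacle; the argument is a textbook duality between a parabolic sub-equation and its adjoint Fokker--Planck equation, which is why the lemma is stated as following from a simple computation.
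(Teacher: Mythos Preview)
Your proposal is correct and matches the paper's approach: the paper states only that the lemma ``is established through a simple computation,'' and the duality pairing you describe---testing the sub-inequality against the nonnegative Fokker--Planck solution $\phi$, using self-adjointness of $\mathcal L^N$ and integration by parts against the drift, then cancelling the bulk via \eqref{dual:FP}---is exactly that computation. One minor remark: spatial mollification alone does not give you time differentiability of $W_\varepsilon$, so the cleanest way to recover the boundary terms at $t=t_0$ and $t=T$ from the distributional inequality is to multiply $\phi$ by a smooth time cutoff $\chi_\delta$ approximating $\mathbf 1_{[t_0,T]}$ and send $\delta\to 0$ using the continuity of $W$ and $\phi$ in $t$; this is presumably what you intend by ``using continuity of $W$ in $t$ to handle the boundary contributions.''
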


For $k =1,2,\ldots$, we denote multiindices by $\bm n = (n_1,n_2,\ldots,n_k)$ and $|\bm n| = k$. For such $\bm n$, define
\[
    V^{N,\bm n} := \left(\prod_{i=1}^k D_{x^{n_i}}\right) V^{N}.
\]

The proof of Theorem \ref{thm:VNestimates} will follow immediately from the following stronger statement. Let us emphasize that, in the statements below, $|\cdot|$ denotes the standard Euclidean norm on the relevant finite-dimensional space. In particular, for a function $W: (\T^d)^N \to \R$,
\[
    |DW|^2 = \sum_{i=1}^N |D_{x^i}W|^2.
\]

\begin{lem}\label{lem:induction}
    Let $k = 1,2,\ldots$ and let $\bm n$ be a multiindex of size $k$. Then there exist constants $C_0$ and $C_1$ as in the statement of Theorem \ref{thm:VNestimates} such that
    \[
        \norm{V^{N,\bm n}}_\infty \le \frac{C_0}{N} + \frac{C_1}{N^{3/2}}
    \]
    and, for any $f \in L^1_+((\T^d)^N)$ and $t_0 \in [0,T]$,
    \[
        \int_{t_0}^T \int_{(\T^d)^N} |D V^{N,\bm n}(t,\bx)|^2 \phi^{N,t_0,f}(t,\bx) d\bx dt \le  \left(\frac{C_0}{N} + \frac{C_1}{N^{3/2}}\right)^2 \norm{f}_{L^1}.
    \]
\end{lem}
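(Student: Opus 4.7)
The plan is to prove Lemma~\ref{lem:induction} by induction on $k$, simultaneously propagating both the pointwise and the weighted $L^2$ estimates at each step (the $L^2$ estimate being essential for controlling higher-order terms later in the induction). For the base case $k=1$, Lemma~\ref{lem:VnLip} already supplies the $L^\infty$ bound. To get the $L^2$ bound, I would set $W = |V^{N,n}|^2$, apply the gradient-gain inequality \eqref{quad:gain} to derive
\[
-\partial_t W - \mathcal L^N W - \bm b^N \cdot D_\bx W + 2|DV^{N,n}|^2 \le 2 V^{N,n}\, F^{N,n},
\]
use the pointwise bound $|F^{N,n}| \le C_0/N$ with $C_0$ depending only on $C_{H,1}$, and apply Lemma~\ref{lem:duality} together with the terminal estimate $W(T,\bx) \le (C_{G,1}/N)^2$.

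For the inductive step, I would assume the bounds hold for all multi-indices of size at most $k-1$, and differentiate \eqref{hjbn} $k$ times in the directions prescribed by $\bm n$, rearranging the $D_p H$-contributions to expose the convection operator $\bm b^N \cdot D_\bx$ acting on $V^{N,\bm n}$. This produces an equation of the form
\[
-\partial_t V^{N,\bm n} - \mathcal L^N V^{N,\bm n} - \bm b^N \cdot D_\bx V^{N,\bm n} + c^N V^{N,\bm n} = F^{N,\bm n},
\]
where $c^N$ is a bounded coefficient (collecting terms linear in $V^{N,\bm n}$ itself, to be dealt with by a Gr\"onwall-type argument), and $F^{N,\bm n}$ is a sum of chain-rule contributions, each of the schematic form
\[
\frac{N^{|\beta|}}{N^{1+|\gamma|}} \sum_{i=1}^N (\text{indicator factors})\, D_x^\alpha D_p^\beta D_m^\gamma H(x^i, ND_{x^i}V^N, m^N_\bx, \ldots) \prod_{\ell=1}^{|\beta|} V^{N, \bm n_\ell \cup \{i\}},
\]
with $(\alpha,\beta,\gamma;\bm n_1,\ldots,\bm n_{|\beta|})$ arising from a partition-type decomposition of $\bm n$. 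The key structural observation is that terms with $|\gamma|\le 1$ involve only first-order measure-derivatives of $H$ and are to be absorbed into the $C_0/N$ part, while those with $|\gamma|\ge 2$ involve higher measure-derivatives and contribute to the $C_1/N^{3/2}$ correction.

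The core estimate is obtained by applying \eqref{quad:gain} to $W = |V^{N,\bm n}|^2$ and then Lemma~\ref{lem:duality}, yielding
\[
\int W(t_0) f + 2\int_{t_0}^T\!\int |DV^{N,\bm n}|^2 \phi \,\le\, \int W(T)\phi(T) + 2\int_{t_0}^T\!\int V^{N,\bm n}(F^{N,\bm n} - c^N V^{N,\bm n})\phi.
\]
The terminal term is controlled by an analogous chain-rule expansion of $D_\bx^{\bm n} G(m^N_\bx)$: a dominant single-measure-derivative piece of size $C_{G,k}/N$ plus higher-derivative corrections of size $C_1/N^2$. The genuinely delicate terms in $F^{N,\bm n}$ are the nonlinear ones with $|\beta|\ge 2$, which are \emph{not} pointwise of order $1/N$. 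The key trick will be to recognize each factor $(V^{N,\bm n_\ell\cup\{i\}})_{i=1}^N$ as a component of $DV^{N,\bm n_\ell}$ in $\R^{Nd}$; successive Cauchy--Schwarz inequalities in $L^2(\phi\, d\bx\, dt)$, combined with the inductive $L^2$ bounds on $DV^{N,\bm n_\ell}$ (plus the inductive $L^\infty$ bound on the remaining factors when $|\beta| > 2$), produce a contribution to the right-hand side bounded by $\|V^{N,\bm n}\|_\infty \cdot N \cdot (C_0/N + C_1/N^{3/2})^2 \|f\|_{L^1}$, i.e.\ at leading order $(C_0^2/N)\|V^{N,\bm n}\|_\infty\|f\|_{L^1}$. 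A Young inequality absorbs the $\|V^{N,\bm n}\|_\infty$-factor into the left-hand side, the $c^N$-contribution is handled by a Gr\"onwall-in-time argument (or equivalently by replacing $W$ with $e^{-\lambda t}W$ for large $\lambda$), and taking $f$ to approximate a point mass at an arbitrary $\bx_0$ closes the induction simultaneously for the pointwise and the $L^2$ estimates.

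The main obstacle is the combinatorial bookkeeping required to enumerate the terms of $F^{N,\bm n}$ and verify that the leading-order constant $C_0$ genuinely depends only on $C_{H,k}$ and $C_{G,k}$ (that is, only on first-order measure-derivatives of the data) and not on higher measure-derivatives; this separation is crucial, since in the application to Theorem~\ref{thm.regularity} the $C_1$-contribution is allowed to blow up with respect to a mollification parameter, while the $N^{-1/2}$ gain in front of $C_1$ must compensate for this in the limit.
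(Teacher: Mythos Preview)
Your proposal is essentially correct and follows the same overall scheme as the paper: induction on $k$, differentiation of \eqref{hjbn} via Fa\`a di Bruno, identification of the drift term $\bm b^N\cdot D_\bx V^{N,\bm n}$, classification of the remaining chain-rule contributions according to how many $p$-, $x$- and $m$-derivatives of $H$ they involve, and closure via the duality Lemma~\ref{lem:duality} combined with the inductive $L^2(\phi)$ bound on $DV^{N,\bm n'}$ for shorter $\bm n'$. The treatment of the terminal data and of the ``nonlinear'' $|\beta|\ge 2$ terms (two factors kept in $L^2(\phi)$, the rest bounded in $L^\infty$ via the inductive hypothesis) is exactly what the paper does for its type-$\III$ terms.

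Two minor differences are worth noting. First, your term $c^N V^{N,\bm n}$ does not actually arise: after differentiating, the highest-order quantity appears only as $D_{x^i}V^{N,\bm n}$ inside the drift, never undifferentiated, so the Gr\"onwall/$e^{-\lambda t}$ device is not needed. Second, the paper decouples the two estimates: it first proves the $L^\infty$ bound by applying convexity of the norm to get $-\partial_t|V^{N,\bm n}|-\mathcal L^N|V^{N,\bm n}|-\bm b^N\cdot D_\bx|V^{N,\bm n}|\le|r^N|$ and invoking Lemma~\ref{lem:duality} on $|V^{N,\bm n}|$ directly; with $\|V^{N,\bm n}\|_\infty$ in hand, it then squares and obtains the $L^2(\phi)$ bound on $DV^{N,\bm n}$. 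Your route---working only with $|V^{N,\bm n}|^2$, using Young's inequality to produce a quadratic inequality in $\|V^{N,\bm n}\|_\infty$, and taking $f$ to approximate a Dirac mass at the maximizer---also closes, but the paper's two-step version is cleaner and avoids the quadratic manipulation. Either way, the substance of the argument and the dependence of $C_0$ on $C_{H,k},C_{G,k}$ alone are the same.
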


\begin{rmk}
    When $n \in \{1,2,\ldots,N\}$ is fixed and $n_1 = \cdots = n_k = n$, we obtain an estimate on $D_{x^n}^k V^N$. We must prove the more general statement above involving mixed derivatives in order to close the induction argument.
\end{rmk}

\begin{proof}[Proof of Lemma \ref{lem:induction}]
    The proof proceeds by strong induction on $k$. For the base case, we already have the uniform bound on $V^{N,n}$ for any $n = 1,2,\ldots,N$ by Lemma \ref{lem:VnLip}. Using \eqref{quad:gain}, we arrive at, for some $C_0$ as in the statement of the lemma,
    \[
    -\partial_t |V^{N,n}|^2 - \mathcal L^N |V^{N,n}|^2 + 2  |D_{\bx}V^{N,n}|^2 - \bm b^N(t,\bx) \cdot D_\bx |V^{N,n}|^2 \le \frac{C_0^2}{N^2},
    \]
    where we have now used Lemma \ref{lem:VnLip} to bound the right-hand side. Choosing $f \in L^1_+((\T^d)^N)$ and $t_0 \in [0,T]$ and using \eqref{DG:bound} and Lemma \ref{lem:duality}, we finish the proof of the base case by estimating
    \begin{align*}
        &\int_{(\T^d)^N} |V^{N,n}(t_0,\bx)|^2 f(\bx)d\bx
        + \int_{t_0}^T \int_{(\T^d)^N} |D_{\bx}V^{N,n}(t,\bx)|^2 \phi^{N,t_0,f}(t,\bx)d\bx dt\\
        &\le \int_{(\T^d)^N} |V^{N,n}(T,\bx)|^2\phi^{t_0,f}(T,\bx)d\bx + \frac{C_0^2(T-t_0)}{N^2}\norm{f}_{L^1}\\
        &\le \left(\norm{V^{N,n}(T,\cdot)}_\infty^2 + \frac{C_0^2(T-t_0)}{N^2} \right)\norm{f}_{L^1}
        \le \frac{C_0'}{N^2} \norm{f}_{L^1},
    \end{align*}
    where we used the fact that $\norm{\phi(t,\cdot)}_{L^1} = \norm{f}_{L^1}$ for all $t \in [t_0,T]$. 
    
    Assume now that the estimates hold whenever $|\bm n| = 1,2,\ldots, k$. Let then $\bm n = (n_1,n_2,\ldots, n_{k+1})$ be a multiindex of length $k+1$. We now differentiate \eqref{hjbn} in $x^{n_j}$ for $j = 1,2,\ldots, k+1$, and keep track of the various terms that arise from differentiating the nonlinearity through the use of the Fa\`a di Bruno formula. 

    First, when one derivative falls in the $p$-variable in $H$, and all others on the term $D_{\bx}V^{N,n_i}$ that comes out, we arrive at exactly
    \[
        -\bm b^{N}(t,\bx) \cdot D_\bx V^{N,\bm n}.
    \]

    All other terms will involve lower order derivatives of $V^N$. Let us first list those terms that are not multiplied by any derivatives of $V^N$, that is, the terms involving only derivatives in the non-$p$ variables of $H$. When all derivatives fall in the $x$-variable, we obtain the term
    \[
        \I_1 = \frac{1}{N}(D^k_x H)(x^{n_1}, ND_{x^{n_1}} V^N, m^N_\bx) \delta_{n_1=n_2=\cdots=n_k}.
    \]
    If one derivative hits the $m$ variable and all others fall on the auxiliary variables that arise from that, we obtain the term
    \[
        \I_2 = \frac{1}{N^2}\sum_{i=1}^N \left(D_y^{k-1} \frac{\partial H}{\partial m}\right)(x^i, ND_{x^i}V^N,m^N_\bx, x^{n_1}) \delta_{n_1 = n_2 = \cdots=n_k},
    \]
    and we conclude, for some $C_0$ as in the statement of the lemma,
    \[
        |\I_1| + |\I_2| \le \frac{C_0}{N}.
    \]
    All other terms involving derivatives not in the $p$-variable will result in factors of $1/N^2$.
    
    Let us next focus on terms consisting of derivatives of $H$ multiplied by a single derivative of $V^N$. In other words, these are the terms where exactly one $p$-derivative of $H$ is taken. One such term, already accounted for above with the vector field $\bm b^N$, is the one in which the \emph{only} derivative of $H$ is one in the $p$ variable. All other such terms will take the form
    \[
        \II_{\bm n'} = \frac{1}{N} \sum_{i=1}^N c^{i,N,\bm n'}(t,\bx) \cdot ND_{x^i}V^{N,\bm n'}(t,\bx),
    \]
    where $\bm n'$ is a multi-index of size $|\bm n'| < k+1$ and $c^{i,N,\bm n'}$ results from taking exactly one derivative in the $p$-variable of $H$ and all other derivatives in the other variables, and therefore, for some $C_1$ as in the statement of the lemma,
    \[
        |c^{i,N,\bm n'}| \le \frac{C_1}{N}.
    \]
    Invoking the inductive hypothesis and the Young and Cauchy-Schwarz inequalities, we then estimate, for any $f \in L^1_+((\T^d)^N)$, $t_0 \in [0,T]$, and $\varepsilon > 0$,
\begin{align*}
    &\abs{\int_{t_0}^T \int_{(\T^d)^N} \II_{\bm n'}(t,\bx) \phi^{N,t_0,f}(t,\bx) d\bx dt}\\
    &\le \frac{C_1}{N} \sum_{i=1}^N \int_{t_0}^T \int_{(\T^d)^N} |D_{x^i} V^{N,\bm n'}(t,\bx)| \phi^{N,t_0,f}(t,\bx)d\bx dt \\
    &\le \frac{C_1(T-t_0)^{1/2}}{N} \sum_{i=1}^N \left( \int_{t_0}^T \int_{(\T^d)^N} |D_{x^i}V^{N,\bm n'}(t,\bx)|^2 \phi^{N,t_0,f}(t,\bx)d\bx dt \right)^{1/2} \norm{f}_{L^1}^{1/2}\\
    &\le \frac{C_1(T-t_0)^{1/2}}{N} \sum_{i=1}^N \left( \frac{1}{2\varepsilon}\int_{t_0}^T \int_{(\T^d)^N} |D_{x^i}V^{N,\bm n'}(t,\bx)|^2 \phi^{N,t_0,f}(t,\bx)d\bx dt + \frac{\varepsilon}{2} \right) \norm{f}_{L^1}^{1/2}\\
    &\le \frac{C_1'}{N}\left( \frac{1}{\varepsilon N^2} \norm{f}_{L^1}^{3/2} + N \varepsilon \norm{f}_{L^1}^{1/2} \right).
\end{align*}
Optimizing in $\epsilon$ yields $\epsilon = N^{-3/2} \norm{f}_{L^1}^{1/2}$. We thus conclude that
\[
    \abs{\int_{t_0}^T \int_{(\T^d)^N} \II_{\bm n'}(t,\bx) \phi^{N,t_0,f}(t,\bx) d\bx dt}
    \le \frac{C_1}{N^{3/2}} \norm{f}_{L^1}.
\]

Finally, the remaining terms all involve derivatives of $H$ in which the $p$-variable is differentiated at least twice. Let us first focus on the terms only involving derivatives of $H$ in the $p$-variables. For some $\ell = 2,3,\ldots,k+1$ (the case $\ell=1$ is already covered by the linearized term with $\bm b^N$ above), these terms take the form, up to some multiplicative constants depending on $k$ but not $N$,
    \begin{equation}\label{pderiv:terms}
       \overline{\III} = N^{\ell-1} \sum_{i=1}^N D_p^\ell H(x^i, ND_{x^i}V^N, m^N_\bx) \bigotimes_{j=1}^{\ell} D_{x^i} V^{N,\bm n_j}
    \end{equation}
    where $|\bm n_1| + |\bm n_2| + \cdots + |\bm n_{\ell}| = k+1$. Because $\ell \ge 2$, each $\bm n_j$ can have length at most $k$, and it cannot be the case that $|\bm n_j| = k$ for more than two values of $j = 1,2,\ldots,\ell$. For any $\bm n_j$ with $|\bm n_j| < k$ we have by the inductive hypothesis
    \[
        N \norm{D_{x^i}V^{N,\bm n_j}}_\infty \le C_0 + \frac{C_1}{N^{1/2}},
    \]
    and so (changing constants as appropriate and continually using $N \ge 1$), for every term of the form \eqref{pderiv:terms}, there exist multi-indices $\bm n_1$ and $\bm n_2$ of length at most $k$ such that 
    \[
        |\overline{\III}| \le
       (C_0 N + C_1N^{1/2})  \sum_{i=1}^N |D_{x_i} V^{N,\bm n_1}| \cdot |D_{x_i}V^{N,\bm n_2}|.
    \]
    Using Young's inequality for the finite sum, we conclude that all the terms involving more than $2$ derivatives of $H$ in the $p$-variable, and no other derivatives of $H$, can be bounded by a finite linear combination (the number of which is independent of $N$) of terms of the form
    \begin{equation}\label{mainterms}
        \III_{\bm n'} = (C_0 N + C_1N^{1/2})  \sum_{i=1}^N |D_{x_i} V^{N,\bm n'}|^2
    \end{equation}
    for a multi-index $\bm n'$ of length $\le k$. The same is true if some derivatives also fall on the $x$, $m$, or the auxiliary $y$ variables, since in those instances we gain negative powers of $N$. From the inductive hypothesis,
    \[
        \int_{t_0}^T \int_{(\T^d)^N} |\III_{\bm n'}(t,\bx)| \phi^{N,t_0,f}(t,\bx)d\bx dt
        \le (C_0 N + C_1N^{1/2}) \left( \frac{C_0}{N} + \frac{C_1}{N^{3/2}} \right)^2
        \le \frac{C_0'}{N} + \frac{C_1'}{N^{3/2}}.
    \]

    Combining all terms, we find that
    \begin{equation}\label{VNk:eqform}
        -\partial_t V^{N,\bm n} - \mathcal L^N V^{N,\bm n} - \bm b^{N}(t,\bx) \cdot D_\bx V^{N,\bm n}
        = r^N,
    \end{equation}
    where, for any $t_0 \in [0,T]$ and $f \in L^1_+((\T^d)^N)$,
    \[
        \int_{t_0}^T \int_{(\T^d)^N} |r^N(t,\bx)| \phi^{N,t_0,f}(t,\bx)d\bx dt
        \le \left( \frac{C_0}{N} + \frac{C_1}{N^{3/2}}\right) \norm{f}_{L^1}.
    \]
    We also have
\begin{equation}\label{Gderiv}
    \left(\prod_{i=1}^{k+1} D_{x^{n_i}}\right)  G(m^N_\bx)
    = \frac{1}{N} D_x^{k+1} \frac{\partial G}{\partial m}(m^N_\bx, \cdot)\Big|_{x = x^{n_1}} \delta_{n_1=n_2=\cdots=n_{k+1}}
    + O(1/N^2),
\end{equation}
where the proportionality constant depends only on $k$ and the higher order derivatives of $G$ in the $m$-variable. By convexity, 
    \[
        -\partial_t |V^{N,\bm n}| - \mathcal L^N |V^{N,\bm n}| + \bm b^{N}(t,\bx) \cdot D_\bx |V^{N,\bm n}|
        \le |r^N|,
    \]
    and then Lemma \ref{lem:duality} yields
    \begin{align*}
        \int_{(\T^d)^N} |V^{N,\bm n}(t_0,\bx)| f(\bx)d\bx
       &\le \int_{(\T^d)^N} |V^{N,\bm n}(T,\bx)| \phi^{N,t_0,f}(T,\bx)d\bx \\
       &+ \int_{t_0}^T \int_{(\T^d)^N} |r^N(t,\bx)| \phi^{N,t_0,f}(t,\bx)d\bx dt 
        \le \left( \frac{C_0}{N} + \frac{C_1}{N^{3/2}} \right) \norm{f}_{L^1}.
    \end{align*}
    Since $f \in L^1$ was arbitrary, this concludes the proof of the supremum bound for $|V^{N,\bm n}|$ with $|\bm n| = k+1$.

    Contracting \eqref{VNk:eqform} with $2V^{N,\bm n}$, we arrive at
    \begin{align*}
        &-\partial_t |V^{N,\bm n}|^2 - \mathcal L^N |V^{N,\bm n}|^2 + 2|D_\bx V^{N,\bm n}|^2 - \bm b^N(t,\bx) \cdot D_\bx |V^{N,\bm n}|^2 \le 2|r^N||V^{N,\bm n}|\\
        &\le 2\left(\frac{C_0}{N} + \frac{C_1}{N^{3/2}}\right)|r^N|.
    \end{align*}
    Using Lemma \ref{lem:duality} once more, along with the inductive hypothesis, we conclude the proof of the inductive step.
\end{proof}

\section{Existence via finite-dimensional approximations} \label{sec.existence}

We will call a function $U = U(t,m) : [0,T] \times \pr(\T^d)$ a limit point of the sequence $V^N$ if for some subsequence $(V^{N_k})_{k \in \N}$, we have 

\begin{align*}
    \sup_{(t,\bx) \in [0,T] \times (\T^d)^{N_k}} \big|U(t,m_{\bx}^{N_k}) - V^{N_k}(t,\bx) \big| \xrightarrow{k \to \infty} 0.
\end{align*}

The first goal of this section is to prove that limit points are viscosity solutions, in the sense of Definition \eqref{def.viscosity}.

\begin{prop} \label{prop.viscosity}
Suppose that Assumption \ref{assump.d1} holds. Then any limit point $U$ of the sequence $(V^N)_{N \in \N}$ is a viscosity solution of \eqref{hjb}. 
\end{prop}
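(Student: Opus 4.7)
The plan is to verify Definition \ref{def.viscosity} directly: I show that $\bar U(t,z,m) := U(t,m^z)$ is a viscosity sub- and supersolution of \eqref{hjbz} by passing to the limit in the finite-dimensional approximations. I treat the subsolution property; the supersolution property follows by the same argument with inequalities reversed. A first preliminary is that Lemma \ref{lem:VnLip} together with the permutation invariance of $V^N$ yields $|V^N(t,\bx) - V^N(t,\by)| \le R^* \bd_1(m^N_\bx, m^N_\by)$ as well as a uniform time-H\"older bound; since empirical measures are $\bd_1$-dense in $\mathcal P(\T^d)$, the limit $U$ therefore extends uniquely to a continuous (indeed $\bd_1$-Lipschitz in $m$) function on $[0,T] \times \mathcal P(\T^d)$, and continuity of $G$ forces $U(T,m) = G(m)$, so $\bar U(T,z,m) = \bar G(z,m)$.

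The key algebraic step is to lift $V^N$ via translation: setting $\bar V^N(t,z,\bx) := V^N(t, \bx + z\mathbf 1)$ with $\bx + z \mathbf 1 := (x^1+z, \ldots, x^N+z)$ and using $m^N_{\bx + z\mathbf 1} = (m^N_\bx)^z$, direct differentiation shows that $\bar V^N$ is a classical solution of
\[
-\partial_t \bar V^N - \sum_{i=1}^N \Delta_{x^i} \bar V^N - a_0 \Delta_z \bar V^N + \frac{1}{N}\sum_{i=1}^N \bar H(x^i, z, N D_{x^i} \bar V^N, m^N_\bx) = 0,
\]
which is precisely the finite-dimensional analogue of \eqref{hjbz}. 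The limit point hypothesis moreover ensures $\sup_{(t,z,\bx)} |\bar V^N(t,z,\bx) - \bar U(t,z,m^N_\bx)| \to 0$ along the subsequence defining $U$.

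By Lemma \ref{lem.testfunctions}, it suffices to consider $\Phi \in \cC^{1,2,2}$ such that $\bar U - \Phi$ has a strict global maximum at $(t_0, z_0, m_0) \in (0, T) \times \T^d \times \mathcal P(\T^d)$. Set $\Phi^N(t,z,\bx) := \Phi(t,z, m^N_\bx)$ and let $(t_N, z_N, \bx_N)$ maximize $\bar V^N - \Phi^N$ on the compact set $[0,T] \times \T^d \times (\T^d)^N$. Picking empirical approximants $\hat \bx^N$ with $m^N_{\hat \bx^N} \to m_0$ in $\bd_1$ and evaluating at $(t_0, z_0, \hat \bx^N)$ yields $\sup (\bar V^N - \Phi^N) \ge (\bar U - \Phi)(t_0, z_0, m_0) + o(1)$. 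Combining compactness of $\mathcal P(\T^d)$ with continuity of $\bar U$ and uniform convergence on empirical measures, any limit point of $(t_N, z_N, m^N_{\bx_N})$ must be a maximizer of $\bar U - \Phi$, hence equal to $(t_0, z_0, m_0)$ by strictness; comparison with the sup of $\bar U - \Phi$ on $\{0, T\} \times \T^d \times \mathcal P(\T^d)$ (strictly smaller since the maximum is interior in time) further shows $t_N \in (0, T)$ for large $N$.

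At this interior maximum, the max-point inequalities applied to the classical equation for $\bar V^N$, combined with nonnegativity of the trace operator $\sum_i \Delta_{x^i} + a_0 \Delta_z$, yield
\[
-\partial_t \Phi^N - \sum_i \Delta_{x^i} \Phi^N - a_0 \Delta_z \Phi^N + \frac{1}{N}\sum_i \bar H\big(x^i_N, z_N, N D_{x^i} \Phi^N, m^N_{\bx_N}\big) \le 0.
\]
Direct computation gives $N D_{x^i} \Phi^N = D_m \Phi(t, z, m^N_\bx, x^i)$ and $\sum_i \Delta_{x^i} \Phi^N = \int_{\T^d} \tr D_x D_m \Phi(t, z, m^N_\bx, x) m^N_\bx(dx) + O(1/N)$, the remainder arising from the diagonal $D_{mm}\Phi$ contribution and bounded uniformly thanks to $\Phi \in \cC^{1,2,2}$. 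Continuity of $\bar H$ and of the derivatives of $\Phi$ then allow passage to the limit along the subsequence, delivering exactly the subsolution inequality \eqref{subsoltest} at $(t_0, z_0, m_0)$. The most delicate step is the localization: one must transfer a strict maximum from $\mathcal P(\T^d)$ to the approximating finite-dimensional spaces, and this is what the $\bd_1$-continuity of $\bar U$ and the empirical approximation of $m_0$ provide.
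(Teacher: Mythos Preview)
Your proof is correct and follows essentially the same route as the paper's: lift $V^N$ by the translation change of variables to obtain a solution of the finite-dimensional analogue of \eqref{hjbz}, restrict to $\Phi \in \cC^{1,2,2}$ with a strict global maximum via Lemma \ref{lem.testfunctions}, project $\Phi$ onto empirical measures, localize the approximate maximizers by strictness and compactness, and pass to the limit using the explicit formulas for $D_{x^i}\Phi^N$ and $\Delta_{x^i}\Phi^N$ together with boundedness of $D_{mm}\Phi$. Your write-up is in fact slightly more explicit than the paper's about the localization step and the verification of the terminal condition, but there is no substantive difference in strategy.
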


\begin{proof}
Without loss of generality, we assume that 
\begin{align} \label{vnconv}
    \sup_{(t,\bx) \in [0,T] \times (\T^d)^N} \big|U(t,m_{\bx}^N) - V^{N}(t,\bx) \big| \xrightarrow{N \to \infty} 0.
\end{align}
Define $\ov{U}(t,z,m)$ by \eqref{def.ovu}. In addition, we define for each $N \in \N$, the function $\ov{V}^N : [0,T] \times \T^d \times (\T^d)^N$ via the formula 
\begin{align*}
    \ov{V}^N(t,z,\bx) = V^N(t,\bx^z) \coloneqq V^N(t,x^1 + z,...,x^N + z). 
\end{align*}
Explicit computation shows that $\ov{V}^N$ is a classical solution of 
\begin{align} \label{hjbovn}
\begin{cases} 
\displaystyle - \partial_t \ov{V}^N - \sum_{ i = 1}^N \Delta_{x^i} \ov{V}^N - a_0 \Delta_z \ov{V}^N   \vspace{.1cm}  + \frac{1}{N} \sum_{i = 1}^N \ov{H}(x^i, z, ND_{x^i} \ov{V}^N, m^N_{\bx}) = 0, \\ \hspace{8cm} (t,z,\bx) \in (0,T) \times \T^d \times(\T^d)^N, \vspace{.1cm} \\ 
\ov{V}^N(T,z,\bx) = \ov{G}(z, m_{\bx}^N), \quad \bx \in \T^d \times (\T^d)^N. \end{cases}
\end{align}
By \eqref{vnconv}, we have
\begin{align} \label{vnbarconv}
     &\sup_{(t,z,\bx) \in [0,T] \times \T^d \times (\T^d)^N} \big|\ov{U}(t,z,m_{\bx}^N) - \ov{V}^{N}(t,z,\bx) \big| \\
     &= \sup_{(t,z,\bx) \in [0,T] \times \T^d \times (\T^d)^N} \big|U(t,m_{\bx^z}^N) - V^{N}(t,\bx^z) \big| \xrightarrow{N \to \infty} 0.
\end{align}
By Lemma \ref{lem.testfunctions}, it suffices to consider a test function $\Phi \in C^{1,2,2}$ such that $\ov{U} - \Phi$ attains a strict maximum at some $(t_0,z_0,m_0) \in (0,T) \times \T^d \times \pr(\T^d)$. Define $\Phi^N : [0,T] \times \T^d \times (\T^d)^N \to \R$ by 
\begin{align*}
    \Phi^N(t,z,\bx) = \Phi(t,z,m_{\bx}^N). 
\end{align*}
By \eqref{vnbarconv} and strict maximality, we can find $(t^N, z^N, \bx^N)$ such that 
\begin{align} \label{approxoptimizer}
    (t^N,z^N,m_{\bx^N}^N) \xrightarrow{N \to \infty} (t_0,z_0,m_0), 
\end{align}
and $\ov{V}^N - \Phi^N$ attains a maximum at $(t^N,z^N,\bx^N)$. Using the subsolution property for $\ov{V}^N$, and recalling that 
\begin{align*}
    D_{x^i x^i} \Phi^N(t,z,\bx) = \frac{1}{N} D_{x} D_m \Phi(t,z,m_{\bx}^N,x^i) + \frac{1}{N^2} D_{mm} \Phi(t,z,m_{\bx}^N,x^i,x^i), 
\end{align*}
we deduce that 
\begin{align} \label{finitedimsub}
\displaystyle  - &\partial_t \Phi(t^N,z^N,m_{\bx^N}^N) - a_0 \Delta_z \Phi(t^N,z^N,m_{\bx^N}^N) -\int_{\R^d} \Delta_x \frac{\delta \Phi}{\delta m}(t^N,z^N,m^N_{\bx^N},x) m(dx) \nonumber  \\ 
&\qquad   - \int_{\R^d} \ov{H}(x, z^N,  D_x \frac{\delta \Phi}{\delta m}(t^N,z^N,m_{\bx^N}^N,x), m_{\bx^N}^N)  m^N_{\bx^N}(dx)  \vspace{.1cm} \nonumber  \\
&= - \partial_t \Phi^N(t^N,z^N,\bx^N) - \sum_{ i = 1}^N \Delta_{x^i} \ov{V}^N(t^N,z^N,\bx^N) - a_0 \Delta_z \ov{V}^N(t^N,z^N,\bx^N) \nonumber \\
&\qquad  + \frac{1}{N} \sum_{i = 1}^N \ov{H}(x^{N,i},z,ND_{x^i} \ov{V}^N(t^N,\bx^N), m_{\bx}^N) 
 + \frac{1}{N^2} \sum_{i = 1}^N \tr(D_{mm} \Phi(t^N,z^N,m_{\bx}^N,x^i,x^i)) \nonumber \\ 
 &\leq C/N,
\end{align}
where the last line uses the fact that $D_{mm} \Phi$ is bounded. We send $N \to \infty$, and combine \eqref{approxoptimizer} with \eqref{finitedimsub} to see that 
\begin{align*}
\displaystyle  &- \partial_t \Phi(t_0,z_0,m_0) - a_0 \Delta_z \Phi(t_0,z_0,m_0) - \int_{\R^d} \Delta_x \frac{\delta \Phi}{\delta m}(t_0,z_0,m_0,x) m_0(dx)  \\
&\qquad \qquad +\int_{\R^d} \ov{H}(x, z_0, D_x \frac{\delta \Phi}{\delta m}(t_0,z_0,m_0,x), m_0)  m_0(dx) \leq 0.
\end{align*}
This shows that $U$ is a viscosity subsolution of \eqref{hjb}. The proof that $U$ is a supersolution is almost identical.
\end{proof}

The next goal of this section is to show that under our main assumptions, there exists a limit point of the sequence $(V^N)_{N \in \N}$, thereby completing the proof of existence.


\begin{proof}[Proof of Theorem \ref{thm.exist}]
    Using Lemma \ref{lem:VnLip}, we see that we can find a constant $C$ and a sequence of functions $\widetilde{V}^N : [0,T] \times \cP(\T^d) \to \R$ such that 
    \begin{align*}
    &\widetilde{V}^N(t,m_{\bx}^N) = V^N(t,\bx) \quad \text{for } (t,\bx) \in [0,T] \times (\T^d)^N, \text{ and } \\
        &|\widetilde{V}^N(t,m) - \widetilde{V}^N(t',m')| \leq C\big(|t-t'|^{1/2} + \bd_1(m,m') \big) \text{ for $t,t' \in [0,T]$, $m,m' \in \cP(\T^d)$.}
    \end{align*}
    Applying the Arzel\'a-Ascoli compactness criterion, we deduce the existence of a function $U : [0,T] \times \cP(\T^d) \to \R$ such that $\widetilde{V}^N \to U$ uniformly. In particular, it follows that $U$ is a limit point of the sequence $(V^N)_{N \in \N}$, which satisfies the estimate \eqref{d1regintro}. Then Proposition \ref{prop.viscosity} shows that this limit point is a viscosity solution.


\end{proof}

In the following section, it will also be important to know that if $H$ and $G$ satisfy additional smoothness condition, then limit points of $(V^N)_{N \in \N}$ are Lipschitz with respect to weaker norms. For this, we will need the following mollification procedure. We note that as in Lemma \ref{lem.c2moll}, the construction is again based on the strategy of \cite{cossomartini}, but we nevertheless provide a full proof in order to verify the fact that $H^n$ and $G^n$ satisfy the regularity requirements of Assumption \ref{assump.d1} uniformly in $n$.

\begin{lem} \label{lem.mollification}
    Suppose that Assumption \ref{assump.d1} holds. Then there exist a sequence $(H^{n}, G^{n})_{n \in \N}$ such that 
    \begin{enumerate}
        \item $H^{n}  : \T^d \times \R^d \times \cP(\T^d) \to \R$, $G^{n} : \cP(\T^d) \to \R$ admits continuous derivatives of all orders,
        \item $H^{n}$, $G^{n}$ satisfy the regularity conditions appearing in Assumption \ref{assump.d1}, uniformly in $n$,
        \item $H^{n} \xrightarrow{n \to \infty} H$ locally uniformly, and $G^{n} \xrightarrow{n \to \infty} G$ uniformly. 
    \end{enumerate}
\end{lem}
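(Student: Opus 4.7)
The plan is to construct $(G^n, H^n)$ by a double mollification identical in spirit to Lemma \ref{lem.c2moll}, alternating between smoothing in the finite-dimensional variables by standard convolution and in the measure variable by averaging over empirical measures coming from iid samples. Let $(\rho_\epsilon)_{\epsilon > 0}$ be a standard mollifier on $\R^d$ and set $\boldsymbol{\rho}^N_\epsilon(\bx) := \prod_{k=1}^N \rho_\epsilon(x^k)$. Define
\[
    \Psi_G^N(\bx) := G(m_\bx^N), \qquad \Psi_H^N(x,p,\bx) := H(x,p,m_\bx^N),
\]
let $\Psi_G^{N,\epsilon} := \Psi_G^N \ast \boldsymbol{\rho}^N_\epsilon$ and $\Psi_H^{N,\epsilon}$ be the convolution of $\Psi_H^N$ against $\rho_\epsilon \otimes \rho_\epsilon \otimes \boldsymbol{\rho}^N_\epsilon$ in the $(x,p,\bx)$-variables, and set
\[
    G^{N,\epsilon}(m) := \int_{(\T^d)^N} \Psi_G^{N,\epsilon}(\bx) m^{\otimes N}(d\bx), \qquad H^{N,\epsilon}(x,p,m) := \int_{(\T^d)^N} \Psi_H^{N,\epsilon}(x,p,\bx) m^{\otimes N}(d\bx).
\]
Finally, take $G^n := G^{N_n,\epsilon_n}$ and $H^n := H^{N_n,\epsilon_n}$ for sequences $N_n \uparrow \infty$ and $\epsilon_n \downarrow 0$ to be chosen at the end. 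Smoothness in $(x,p)$ is automatic from the convolution, and smoothness in $m$ follows because $m \mapsto \int_{(\T^d)^N} f(\bx) m^{\otimes N}(d\bx)$ admits iterated linear derivatives of all orders---each an explicit symmetric sum of integrals of derivatives of $f$---whenever $f$ is smooth and bounded on $(\T^d)^N$.

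The uniform regularity estimates of item (2) are the main point. The key observation is that $\bd_1(m_\bx^N, m_\by^N) \le \tfrac{1}{N}\sum_{i=1}^N |x^i - y^i|$, so the $C_G$-Lipschitz property of $G$ implies $\Psi_G^N$ is $(C_G/N)$-Lipschitz in each coordinate $x^i$, a property preserved by convolution. Writing the linear derivative
\[
    \frac{\delta G^{N,\epsilon}}{\delta m}(m, x) = \sum_{i=1}^N \int_{(\T^d)^{N-1}} \Psi_G^{N,\epsilon}(\bx^{-i}, x) m^{\otimes(N-1)}(d\bx^{-i}) + c(m)
\]
(with $c(m)$ chosen to enforce \eqref{normalizationconvention}), differentiating in $x$, and invoking the coordinatewise Lipschitz bound yields $\|D_x (\delta G^{N,\epsilon}/\delta m)\|_{L^\infty} \le C_G$ uniformly in $N, \epsilon$, which is exactly $\bd_1$-Lipschitzness of $G^{N,\epsilon}$ with constant $C_G$. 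For $H^{N,\epsilon}$ an analogous argument treats the $x$, $p$, and $m$ dependencies one at a time, using \eqref{assump.H}; since each of the three convolutions acts on a distinct variable without affecting the value of $|p|$, the joint estimate \eqref{assump.H} passes through convolution with the same constant $C_H$.

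Convergence is handled via the decomposition
\[
    |G^{N,\epsilon}(m) - G(m)| \le \int_{(\T^d)^N} \bigl|\Psi_G^{N,\epsilon}(\bx) - \Psi_G^N(\bx)\bigr| m^{\otimes N}(d\bx) + \int_{(\T^d)^N} |G(m_\bx^N) - G(m)| m^{\otimes N}(d\bx).
\]
The first term is bounded by $C_G \epsilon$ by the Lipschitz estimate of Step 2, and the second by $C_G \int \bd_1(m_\bx^N, m) m^{\otimes N}(d\bx) \le C_G r_{N,d}$ by the empirical measure rate of \cite{fournier2015rate} used already in Lemma \ref{lem.c2moll}; both bounds are uniform in $m \in \cP(\T^d)$ thanks to compactness of $\T^d$. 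An identical decomposition gives convergence $H^{N,\epsilon} \to H$ locally uniformly in $p$, with the dependence on $p$ entering only through the growth factor $(1 + |p|)$ in \eqref{assump.H}. Choosing $N_n \uparrow \infty$ and $\epsilon_n \downarrow 0$ completes the proof. The main bookkeeping obstacle is verifying that the joint regularity \eqref{assump.H}, with its coupled growth factor in $p$, is preserved under the three-fold convolution without any degradation of the constant; this requires disentangling the action of each convolution on its own variable and using that none of the three variables feeds back into the weight $(1 + |p| + |p'|)$.
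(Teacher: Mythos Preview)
Your proposal is correct and follows essentially the same construction as the paper: project onto empirical measures, mollify in the finite-dimensional variables, and average against $m^{\otimes N}$, with convergence via the same decomposition using \cite{fournier2015rate}. One small inaccuracy worth noting: your claim that convolution preserves \eqref{assump.H} ``with the same constant $C_H$'' because ``none of the three variables feeds back into the weight $(1+|p|+|p'|)$'' is not quite right---the convolution in $p$ shifts $p$ by up to $\epsilon$, so the weight becomes $(1+|p-q|+|p'-q|)\le (1+|p|+|p'|+2\epsilon)$, giving a constant like $C_H(1+2\epsilon)$ rather than $C_H$. Since $\epsilon_n\to 0$ this is still uniform in $n$, so the conclusion stands; the paper handles this by bounding $|D_xH^n|+|D_pH^n|+|D_mH^n|\le C(1+|p|)$ directly, which sidesteps the issue.
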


\begin{proof}
    We explain the mollification procedure only for $H$, since it will be clear how to use the same construction to mollify $G$. We will follow the same strategy as in the proof of Lemma \ref{lem.c2moll}. First, for each $n \in \N$, we define a projection $\ov{H}^n$ of $H$ as follows:
    \begin{align*}
        \ov{H}^n : \T^d \times \R^d \times (\T^d)^N \to \R, \quad \ov{H}^n(x,p,\by) = H(x,p,m_{\by}^N). 
    \end{align*}
    Now let $(\phi_{\epsilon})_{\epsilon > 0}$ be a standard mollifier on $\R^d$ with $\text{supp}(\phi_{\epsilon}) \subset B_{\epsilon} \subset \R^d$, and define
    \begin{align*}
        \ov{H}^{n,\epsilon}(x,p,\by) = \int_{\R^d} \int_{\R^d} \int_{(\R^d)^n} \ov{H}^n(x', p', \by') \rho_{\epsilon}(x-x') \rho_{\epsilon}(p - p') \prod_{i = 1}^n \rho_{\epsilon}(y^i - y^{'i}) dx' dp' d\by'.
    \end{align*}
    Finally, we define $H^n : \T^d \times \R^d \times \cP(\T^d) \to \R$ via
    \begin{align*}
        H^n(x,p,m) = \int_{(\T^d)^n} \ov{H}^{n,1/n}(x,p,\by) m^{\otimes n}(d\by).
    \end{align*}
    We now verify that $H^n$ has the desired properties. First, it is easy to see that $\ov{H}^{n,\epsilon}$ is smooth for each $n \in \N$ and $\epsilon > 0$, from which it follows that $H^n$ is smooth. 
    
    Next, we show that $H^n$ satisfies \eqref{assump.H} uniformly in $n$. Note that because $H$ satisfies the condition \eqref{assump.H}, the functions $\ov{H}^n$ satisfy 
    \begin{align*}
        |\ov{H}^n(x,p,\by) - \ov{H}^n(x',p', \by')| \leq C\big(1 + |p| + |p'|) \Big(|x - x'| + |p - p'| + \frac{1}{n} \sum_{i = 1}^n |y^i - y^{'i}| \Big).
    \end{align*}
    In particular, increasing $C$ if necessary, we find that for all $0 < \epsilon \leq 1$, and each $n \in \N$,
    \begin{align} \label{derivativeest}
        |D_{x} \ov{H}^{n,\epsilon} (x,p,\by)| + |D_{p} \ov{H}^{n,\epsilon} (x,p,\by)| + n\max_{i = 1,...,n} |D_{y^i} \ov{H}^{n,\epsilon}(x,p,\by)| \leq C(1 + |p|).
    \end{align}
    But we can compute explicitly 
    \begin{align*}
        D_x H^n(x,p,m) &= \int_{(\T^d)^n} D_x\ov{H}^{n,1/n}(x,p,\by) m^{\otimes n}(d\by), \\
        D_p H^n(x,p,m) &= \int_{(\T^d)^n} D_p\ov{H}^{n,1/n}(x,p,\by) m^{\otimes n}(d\by), \\
        D_m H^n(x,p,m,z) &= \sum_{i = 1}^n \int_{(\T^d)^{n-1}} D_{y^i} H^{n,1/n}(x,p,(\by^{-i},z)) m^{\otimes (n-1)}(d\by^{-i}). 
    \end{align*}
    When combined with \eqref{derivativeest}, this shows that 
    \begin{align*}
        |D_x H^n(x,p,m)| + |D_p H^n(x,p,m)| + |D_m H^n(x,p,m)| \leq C( 1 + |p|), 
    \end{align*}
    which easily implies that the functions $H^n$ satisfy \eqref{assump.H} uniformly in $n$. 
    
    Finally, we show that $H^n \to H$ locally uniformly. Since $\text{supp}(\phi_{\epsilon}) \subset B_{\epsilon}$, we can estimate 
    \begin{align*}
        |H^{n,1/n}(x,p,\by) - \ov{H}^n(x,p,\by)| \leq \frac{C}{n}(1 + |p|), 
    \end{align*}
    and thus 
    \begin{align*}
        |H^n(x,p,m) - H(x,p,m)| &\leq  \int_{(\T^d)^n} |H^{n,1/n}(x,p,\by) - H(x,p,m)| m^{\otimes n}(d\by)
        \\
        &\leq \frac{C}{n}(1 + |p|) + \int_{(\T^d)^n} |\ov{H}^n(x,p,\by) - H(x,p,m)| m^{\otimes n}(d\by) \\
        &= \frac{C}{n}(1 + |p|) + \int_{(\T^d)^n} |H(x,p,m_{\by}^n) - H(x,p,m)| m^{\otimes n}(d\by) \\
        &\leq \frac{C}{n}(1 + |p|) + C (1 + |p|) \int_{(\T^d)^n} \bd_1(m_{\by}^n, m) m^{\otimes n}(d\by) 
        \\
        &\leq C(1 + |p|)\big(N^{-1} + N^{-1/d} \big),
    \end{align*}
    which completes the proof. 
\end{proof}

We can now complete the proof of Theorem \ref{thm.exist}. Actually, we will prove the following more general statement. 

\begin{prop} \label{prop.existsmooth}
    Suppose that Assumption \ref{assump.d1} holds, and in addition \eqref{glipk} and \eqref{hlipk} hold for some $k \in \N$. Then there exists a solution $U$ of \eqref{hjb} which satisfies the regularity estimate in \eqref{regintro}.
\end{prop}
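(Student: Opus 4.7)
The plan is to obtain $U$ as a double limit of finite-dimensional solutions to a mollified \eqref{hjbn}, converting the pointwise derivative estimates of Theorem \ref{thm:VNestimates} into a uniform $C^{-k}$-Lipschitz bound on a symmetrized surrogate, and then passing first $N \to \infty$ and then the mollification parameter to its limit. First I would adapt the construction in Lemma \ref{lem.mollification} to produce smooth data $(H^n, G^n)$ converging to $(H, G)$ and satisfying Assumption \ref{assump.d1}, \eqref{glipk}, and \eqref{hlipk} \emph{uniformly in $n$}. The key point here is that convolution with a mollifier preserves the first-order measure-derivative constants $C_{H,k}$ and $C_{G,k}$, even though higher-order measure derivatives of $H^n$ and $G^n$ may blow up as $n \to \infty$. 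Letting $V^{N,n}$ be the classical solution of \eqref{hjbn} with data $(H^n, G^n)$, Theorem \ref{thm:VNestimates} then yields, for $1 \le j \le k$,
\[
    \| D_{x^i}^j V^{N,n} \|_\infty \le \frac{C_0}{N} + \frac{C_{1,n}}{N^{3/2}},
\]
with $C_0$ independent of both $N$ and $n$.

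Next I would introduce the symmetric surrogate
\[
    \hat V^{N,n}(t,m) \coloneqq \int_{(\T^d)^N} V^{N,n}(t,\bx)\, m^{\otimes N}(d\bx)
\]
and compute its linear derivative via the symmetry of $V^{N,n}$,
\[
    D_x^j \frac{\delta \hat V^{N,n}}{\delta m}(t,m,x) = \sum_{i=1}^N \int_{(\T^d)^{N-1}} D_{x^i}^j V^{N,n}\bigl(t, (\by^{-i}, x)\bigr)\, m^{\otimes(N-1)}(d\by^{-i}),
\]
whose sup-norm is controlled by $C_0 + C_{1,n}/N^{1/2}$ for $1 \le j \le k$. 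Combined with the normalization \eqref{normalizationconvention} (which, via a Poincar\'e-type bound on $\T^d$, upgrades gradient control to sup-norm control), this gives that $\hat V^{N,n}(t,\cdot)$ is Lipschitz on $\cP(\T^d)$ with respect to $\|\cdot\|_{C^{-k}}$ with constant $C_0 + C_{1,n}/N^{1/2}$. Moreover, the $1/N$-Lipschitz-in-$\bx$ estimate of Lemma \ref{lem:VnLip} combined with the empirical measure convergence rates of \cite{fournier2015rate} gives $\sup_{(t,\bx)} |V^{N,n}(t,\bx) - \hat V^{N,n}(t, m_\bx^N)| \to 0$ as $N \to \infty$, so $V^{N,n}$ and $\hat V^{N,n}$ share their limit points.

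Fixing $n$, the uniform $C^{-k}$-Lipschitz bound in $m$ together with the uniform $1/2$-H\"older bound in $t$ from Lemma \ref{lem:VnLip} makes $(\hat V^{N,n})_N$ equicontinuous on the compact space $[0,T] \times (\cP(\T^d), \bd_1)$, so Arzel\`a-Ascoli extracts a subsequence $\hat V^{N_j,n} \to U^n$ uniformly. Since $C_{1,n}/N_j^{1/2} \to 0$, the limit $U^n$ is in fact $C_0$-Lipschitz with respect to $\|\cdot\|_{C^{-k}}$, and since it is also a limit point of $V^{N,n}$, Proposition \ref{prop.viscosity} (applied with the mollified data) identifies $U^n$ as a viscosity solution of \eqref{hjb} for $(H^n, G^n)$. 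The family $(U^n)_n$ is then equi-$C^{-k}$-Lipschitz in $m$ and equi-H\"older in $t$ with constants independent of $n$, so a further Arzel\`a-Ascoli extraction yields $U^{n_\ell} \to U$ uniformly, with $U$ inheriting \eqref{regintro}. A standard stability argument on test functions, exactly as in the proof of Proposition \ref{prop.viscosity} but now using the locally uniform convergence $H^n \to H$ and the uniform convergence $G^n \to G$ to pass \eqref{subsoltest}--\eqref{supersoltest} to the limit, shows that $U$ is a viscosity solution of the original equation \eqref{hjb}. The most delicate step is the very first one---verifying uniform-in-$n$ preservation of \eqref{glipk} and \eqref{hlipk} under mollification---since this is precisely what forces $C_0$ in Theorem \ref{thm:VNestimates} to be independent of $n$ and allows the outer limit to proceed without losing regularity.
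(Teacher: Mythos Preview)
Your proposal is correct and follows essentially the same route as the paper's proof: introduce the symmetrized surrogate $\hat V^N$, use Theorem \ref{thm:VNestimates} to bound $\|\frac{\delta \hat V^N}{\delta m}(t,m,\cdot)\|_{C^k}$ by $C_0 + C_1/\sqrt{N}$, pass to the limit to obtain a $C_0$-Lipschitz solution with respect to $\|\cdot\|_{C^{-k}}$, and handle non-smooth data by mollification together with a stability argument. The only structural difference is ordering: the paper first treats the smooth case in full and then mollifies at the end, whereas you carry the mollification parameter $n$ through the whole argument and perform a double limit; the ingredients and the constants tracked are identical. You also rightly flag the step the paper passes over quickly, namely that the mollified data must satisfy \eqref{glipk} and \eqref{hlipk} with constants \emph{uniform in $n$}, since this is exactly what forces the $C_0$ coming out of Theorem \ref{thm:VNestimates} to be independent of $n$.
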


\begin{proof}
    Assume for the moment that $H$ and $G$ are smooth, i.e. admit bounded derivatives of all orders. Let $U$ denote any limit point of $(V^N)_{N \in \N}$. Our goal is now to prove that $U$ satisfies \eqref{regintro}. Without loss of generality, we assume that 
    \begin{align} \label{wlog}
        \sup_{(t,\bx)} |V^N(t,m) - U(t,m_{\bx}^N) | \to 0.
    \end{align}
    Because $H$ and $G$ are smooth, so is $V^N$, and so the function 
    \begin{align*}
        \hat{V}^N : [0,T] \times \cP(\T^d) \to \R, \quad \hat{V}^N(t,m) = \int_{(\T^d)^N} V^N(t,\by) m^{\otimes N}(d\by)
    \end{align*}
    is smooth. Its linear derivative is given by 
    \begin{align*}
        \frac{\delta \hat{V}^N}{\delta m}(t,m,x) = \sum_{i = 1}^N \int_{(\T^d)^{N-1}} V^N(t,(\by^{-i},x)) m^{\otimes (N-1)}(d\by^{-i}). 
    \end{align*}
    In particular, using Theorem \ref{thm:VNestimates}, we have 
    \begin{align*}
       \sup_{t,m} \|\frac{\delta \hat{V}^N}{\delta m}(t,m,\cdot)\|_{C^k} \leq C\sum_{1 \leq j \leq k} \sum_{i = 1}^N \|D_{x^i}^k V^N \|_{\infty} \leq C_0 + C_1/\sqrt{N}, 
    \end{align*}
    where $C_0$ depends only on the constants $C_H$, $C_{H,k}$, $C_{G,k}$ appearing in Assumption \ref{assump.d1} and \eqref{R}, while $C_1$ is independent of $N$. Using the definition of the linear derivative,
    \begin{align} \label{vnhatest}
        |\hat{V}^N(t,m) - \hat{V}^N(t,m')| &= \int_0^1 \int_{\T^d} \frac{\delta V^N}{\delta m}(t,sm + (1-s)m', x) (m - m')(dx) \nonumber \\
        &\leq \sup_{s,m} \|\frac{\delta \hat{V}^N}{\delta m}(s,m,\cdot)\|_{C^k} \|m - m'\|_{C^{-k}} \leq \big(C_0 + C_1/N \big) \|m - m'\|_{C^{-k}},  
    \end{align}
     where $C_0$ depends only on $C_H$, $C_{H,k}$, $C_{G,k}$. Next, using the same argument as in the proof of Proposition 6.2 in \cite{DDJ_23}, we have
    \begin{align*}
        \sup_{(t,\bx)} |\hat{V}^N(t,m_{\bx}^N) - V^N(t,\bx)| \to 0, 
    \end{align*}
    which together with \eqref{wlog} and the fact that the functions $\hat{V}^N$ and $U$ satisfy the estimate \eqref{d1regintro} uniformly, implies that
    \begin{align*}
        \sup_{t,m} |U(t,m) - \hat{V}^N(t,m)| \to 0. 
    \end{align*}
    In particular, we can pass to the limit in \eqref{vnhatest} to infer that $U$ satisfies
    \begin{align} \label{lipck}
        |U(t,m) - U(t,m')| \leq C_0 \|m - m'\|_{C^{-k}}, 
    \end{align}
    with $C_0$ depending only on $C_H$, $C_{H,k}$, $C_{G,k}$.
    
    At this point, we have shown that if $H$ and $G$ satisfy \eqref{hlipk} and \eqref{glipk} in addition to Assumption \ref{assump.d1}, and in addition $H$ and $G$ are smooth, then there exists a solution $U$ which satisfies \eqref{regintro}, with constant $C_0$ depending only on $C_H$, $C_{k,R}$. Moreover, when $k \geq 2$, Theorem \ref{thm:VNestimates} implies
    \begin{align*}
        | \partial_t V^N| = \bigg| \sum_{i = 1}^N \Delta_{x^i} V^N + a_0 \sum_{i,j = 1}^N \tr\big(D_{x^ix^j} V^N \big) - \frac{1}{N} \sum_{i = 1}^N H(x^i, ND_{x^i} V^N, m_{\bx}^N) \bigg| \leq C_0' + C_1'/\sqrt{N}, 
    \end{align*}
    where $C_0'$ depends only on $C_H$, $C_{H,k}$, $C_{G,k}$. Passing this estimate to the limit, we find that if $k \geq 2$, then the limit point $U$ we have constructed is Lipschitz in time, with constant depending only on $C_H$, $C_{H,k}$, $C_{G,k}$. 
    
    To complete the proof we must remove the assumption that $H$ and $G$ are smooth. For this, we apply Lemma \ref{lem.mollification} to produce $(H^{\epsilon}, G^{\epsilon})_{\epsilon > 0}$ satisfying conditions (1)-(3) in the statement of Lemma \ref{lem.mollification}. Next, let $U^{\epsilon}$ denote a solution to \eqref{hjb} with data $G^{\epsilon}, H^{\epsilon}$ which satisfies the estimate \eqref{regintro} with a constant $C$ independent of $\epsilon$. Then by Arzela-Ascoli, we can find a sequence $(\epsilon_j)_{j \in \N}$ and a function $U : [0,T] \times \cP(\T^d) \to \R$ such that $U^{\epsilon_j} \to U$ uniformly as $j \to \infty$. It is straightforward to check that $U$ is a viscosity solution of \eqref{hjb} satisfying the estimate \eqref{regintro}. 
\end{proof}

\section{Comparison and convergence} \label{sec.comparison}

The goal of this section is to prove the comparison principle, Theorem \ref{thm.comparison}, and explain how it implies the convergence result, Theorem \ref{thm.convergence}. We fix throughout this section a $k \in \N$ with 
\begin{align*}
    k > d/2 + 2.
\end{align*}
We start with some preliminary notation and reminders about the Sobolev spaces $H^{k}$ and $H^{-k}.$ First, we define
\[
    H^k := \bigl \{ \varphi \in L^2; \mbox{  for all  }j \in \N_0^d, \mbox{ with } |j| \leq k, D^{j} \varphi \mbox{ exists in the weak sense and belongs to } L^2 \bigr \}
\]
It becomes a Hilbert space when endowed with the inner product
$$ \langle \varphi_1, \varphi_2 \rangle_{k} :=  \sum_{ |j| \leq k}  \langle D^{j} \varphi_1, D^{j} \varphi_2 \rangle_{L^2(\T^d)}. $$
We denote by $\norm{\cdot}_{k}$ the associated norm. We define $H^{-k}$ as the dual of $H^k$, i.e. the set of bounded linear functionals over $H^k$, and we denote
$$ \norm{ q }_{-k} := \sup_{\varphi \in H^k,  \norm{\varphi}_{k} \leq 1} q(\varphi) $$
the associated norm. For any $q \in H^{-k}$ we denote by $q^{*}$ its dual element, i.e. the unique element of $H^k$ satisfying
$$q(\varphi) = \langle q^{*}, \varphi \rangle_{k}, \quad \mbox{for all } \varphi \in H^k.$$
The map $q \mapsto q^*$ is an isometry from $H^{-k}$ into $H^k$, if the inner product in $H^{-k}$ is defined by $\langle \mu, \nu\rangle_{-k} = \langle \mu^*, \nu^* \rangle_k$, which also coincides with the definition of the norm $\norm{\cdot}_{-k}$.

Recall that, for $k > d/2 + 2$, $H^k$ continuously embeds in $C^2(\T^d)$. In particular, $\mathcal{P}(\T^d)$ can be seen as a compact subset of $H^{-k}$ via the identification $\mu(\varphi) = \int_{\T^d} \varphi d\mu$, for all $\varphi \in H^k$ and all $\mu \in \mathcal{P}(\T^d)$. Finally from \cite{DDJ_23} Lemma 2.11, or a simple computation, (the restriction of ) any function $\Phi \in C^1(H^{-k})$ (in the classical Hilbertian sense) belongs to $\cC^1(\mathcal{P}(\T^d))$ and
$$ \frac{\delta \Phi}{\delta m} (m,x) = D_{-k} \Phi(m)(x) - \int_{\T^d} (D_{-k} \Phi(m) \bigr)(y)dy,$$
where $D_{-k} \Phi \in H^k$ is the (Frechet) differential of $\Phi.$ In particular, by our choice of convention \eqref{normalizationconvention}, for any $\nu \in \mathcal{P}(\T^d)$, $\mu \mapsto \frac{1}{2} \norm{\mu -\nu}_{-k}^2$ belongs to $C^1(\mathcal{P}(\T^d))$ with
$$ \frac{\delta }{\delta m}( \mu \mapsto \frac{1}{2}\norm{\mu - \nu}^2_{-k} )= (\mu - \nu)^{*}.$$
\begin{rmk}
The Hilbert spaces $H^{k}$ and $H^{-k}$ introduced above can be explicitly defined in terms of the Fourier coefficients of the corresponding elements, as in \cite{DDJ_23, Soner2022}. Our choice of inner product and norm are slightly different than in those works, though equivalent and with the same regularity properties. In particular, the map
\[
    \mathcal P(\T^d) \ni \mu \mapsto \mu^* \in H^k(\T^d)
\]
is a smoothing operator that can be explicitly described by the inverse of a certain differential operator, although this particular characterization is not relevant to the results in this section.

Let us also note that the idea of measuring the distance between $\mu,\nu \in \mcl P(\T^d)$ by integrating against special test functions is introduced also in \cite{Gretton_2012}, where the relevant metrics are known as maximum mean discrepancy and applied to problems in machine learning. 
\end{rmk}

It will be convenient to introduce a notion of semi-jets as follows.

\begin{defn} \label{def.semijet}
    For $U : [0,T] \times \T^d \times \mathcal{P}(\T^d) \rightarrow \R  \cup \{ - \infty \} $  an upper semi-continuous function, and $(t, z, \mu) \in [0,T] \times \T^d \times \mathcal{P}(\T^d)$ such that $\Phi( t, z, \mu) >-\infty$ we define $J^{2,+} V(t, z , \mu)$ as the set of elements 
    $$(r,p,X,\varphi) \in \R \times \R^d \times S^d(\R) \times C^2(\T^d)$$ such that there exists a $\cpart$ function $\Phi : [0,T] \times \T^d \times \mathcal{P}(\T^d)$ satisfying
\begin{equation}
\left \{
\begin{array}{ll}
       \displaystyle V - \Phi \mbox{ has a local maximum at }(t,z,\mu), \vspace{.1cm}  \\
       \displaystyle \bigl( \partial_t \Phi, D_z \Phi,  \frac{\delta \Phi} {\delta m} \bigr) ( t , z , \mu ) = (r,p,\varphi), \vspace{.1cm} \\ 
        \displaystyle D^2_{zz} \Phi( t, z, \mu ) \leq X.
\end{array}
\right.
\end{equation}

Similarly, for a lower-semicontinuous function $V: [0,T] \times \T^d \times \cP(\T^d) \rightarrow \R  \cup \{ + \infty \} $ and some $(t, z, \mu) \in [0,T] \times \T^d \times \cP(\T^d)$ such that $V( t, z, \mu) <+\infty$ we define $J^{2,-}V(t, z, \mu) = -J^{2,+} (-V)(t, z, \mu).$

We also say that $(r,p,X,\varphi) \in \R \times \R^d \times S^d(\R) \times C^2(\T^d)$ belongs to $\bar{J}^{2,+}V(t , z , \mu)$ if there are sequences $(t_n, z_n, \mu_n)_{ n \geq 0}$ and $(r_n,p_n,X_n, \varphi_n)_{n \geq 0}$ such that 
\begin{equation} 
(r_n,p_n,X_n, \varphi_n) \in J^{2,+}u(t_n,z_n,\mu_n), \forall n \geq 0, \mbox{ and }
\end{equation}
\begin{equation}
    \left \{
    \begin{array}{ll}
    (t_n,z_n, \mu_n) \to (t,x,\mu), \vspace{.1cm} \\ 
   V(t_n,z_n,\mu_n) \to V(t,z,\mu),  \vspace{.1cm}\\
   (r_n,p_n, X_n ,\varphi_n) \to (r,p,X,\varphi), \\
   \end{array}
   \right.
\end{equation}
as $n \rightarrow +\infty$, where convergence for $(\varphi_n)_{n \geq 0}$ holds in $C^2(\T^d)$. We define $\bar{J}^{2,-}V$ analogously.

\end{defn}

It is straightforward to check that the notion of viscosity solution of Definition \eqref{def.viscosityz} can be equivalently formulated in terms of these sub/super-jets. We record this fact as a lemma.

\begin{lem}
    Assume Assumption \ref{assump.d1}. An upper-semi-continuous function $V : [0,T] \times \T^d \times \mathcal{P}(\T^d) \rightarrow \R \cup \{- \infty\}$ is a viscosity sub-solution of \eqref{hjbz} in the sense of Definition \ref{def.viscosityz} if and only if, $V(T, \cdot) \leq G$ and, for any $(t,z,\mu) \in [0,T] \times \mathcal{P}(\T^d) \times \cP(\T^d)$ such that $V(t,z,\mu) >-\infty$ and any $(r,p,X,\varphi) \in \bar{J}^{2,+}(t,z,\mu)$ it holds
\begin{equation}
-r - a_0 \tr(X) - \int_{\T^d} \Delta_x \varphi(x) d\mu(x) + \int_{\T^d} \ov{H}(x, z, D_x\varphi(x), \mu) d\mu(x) \leq 0.
\label{equivalencedefviscositysolutions}
\end{equation}
A similar statement holds for super-solutions.
\end{lem}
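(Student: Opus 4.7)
The plan is to prove the two implications separately, as is standard for equivalences between test-function and semi-jet formulations of viscosity solutions. The content is essentially bookkeeping, but some care is needed in tracking the matrix inequality $D_{zz}^2 \Phi \leq X$ through the common-noise term.

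\textbf{Viscosity subsolution $\Rightarrow$ jet inequality.} Suppose $V$ is a viscosity subsolution. I would first establish the jet inequality for $(r,p,X,\varphi) \in J^{2,+}V(t,z,\mu)$ and then extend to the closure $\bar J^{2,+}V$. For the first step, let $\Phi \in \cpart$ be a test function realizing the semi-jet element at $(t,z,\mu)$, so that $V - \Phi$ has a local maximum there and
\[
    (\partial_t \Phi, D_z \Phi, \tfrac{\delta \Phi}{\delta m})(t,z,\mu) = (r,p,\varphi), \qquad D_{zz}^2 \Phi(t,z,\mu) \leq X.
\]
Applying Definition \ref{def.viscosityz} and using $D_m \Phi = D_x \frac{\delta \Phi}{\delta m} = D_x \varphi$, hence $\tr D_x D_m \Phi = \Delta_x \varphi$, we obtain
\[
    -r - \int_{\T^d} \Delta_x \varphi(x) d\mu(x) - a_0 \tr(D_{zz}^2 \Phi(t,z,\mu)) + \int_{\T^d} \ov H(x, z, D_x\varphi(x), \mu) d\mu(x) \leq 0.
\]
Since $a_0 \geq 0$ and $D_{zz}^2 \Phi(t,z,\mu) \leq X$, we have $-a_0 \tr(D_{zz}^2 \Phi) \geq -a_0 \tr(X)$, which yields \eqref{equivalencedefviscositysolutions}. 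To extend to $\bar J^{2,+}V$, take sequences $(t_n, z_n, \mu_n) \to (t,z,\mu)$ and $(r_n, p_n, X_n, \varphi_n) \to (r,p,X,\varphi)$ with $(r_n, p_n, X_n, \varphi_n) \in J^{2,+}V(t_n, z_n, \mu_n)$. The inequality \eqref{equivalencedefviscositysolutions} holds at each stage; passing to the limit requires only the continuity of $\ov H$ in its arguments (given by Assumption \ref{assump.d1}) together with $C^2$-convergence of $\varphi_n$, which controls both $\Delta_x \varphi_n \to \Delta_x \varphi$ and $D_x \varphi_n \to D_x \varphi$ uniformly.

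\textbf{Jet inequality $\Rightarrow$ viscosity subsolution.} Assume \eqref{equivalencedefviscositysolutions} holds for every $(r,p,X,\varphi) \in \bar J^{2,+}V(t,z,\mu)$. Let $\Phi \in \cpart$ be any test function such that $V - \Phi$ attains a local maximum at $(t_0, z_0, m_0) \in (0,T) \times \T^d \times \cP(\T^d)$. I would then set
\[
    r = \partial_t \Phi(t_0,z_0,m_0), \ \ p = D_z \Phi(t_0, z_0, m_0), \ \ X = D_{zz}^2 \Phi(t_0, z_0, m_0), \ \ \varphi = \tfrac{\delta \Phi}{\delta m}(t_0, z_0, m_0, \cdot).
\]
Since $\Phi \in \cpart$ guarantees continuity of $D_x D_m \Phi = D_{xx}^2 \varphi$, we have $\varphi \in C^2(\T^d)$, and by construction $\Phi$ itself witnesses that $(r, p, X, \varphi) \in J^{2,+}V(t_0, z_0, m_0) \subseteq \bar J^{2,+}V(t_0, z_0, m_0)$. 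Applying \eqref{equivalencedefviscositysolutions} and using the identifications $D_m \Phi = D_x \varphi$, $\tr(X) = \Delta_z \Phi$, and $\int \tr(D_x D_m \Phi) d m_0 = \int \Delta_x \varphi \, dm_0$, we recover exactly \eqref{subsoltest}. The terminal condition $V(T,\cdot) \leq \ov G$ is assumed on both sides, completing the equivalence. The statement for supersolutions follows by applying the result to $-V$ and using $\bar J^{2,-}V = -\bar J^{2,+}(-V)$.

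\textbf{Main obstacle.} There is no substantial obstacle; the argument is a direct unpacking of definitions. The only subtlety worth flagging is the role of the sign $a_0 \geq 0$ in converting the matrix inequality $D_{zz}^2 \Phi \leq X$ into the scalar inequality $-a_0 \Delta_z \Phi \geq -a_0 \tr(X)$ needed in the forward direction, and the mild regularity check that $C^2$-convergence of $\varphi_n$ along closure sequences suffices to pass to the limit in both the Laplacian term and the Hamiltonian term (where Assumption \ref{assump.d1} provides the local Lipschitz dependence on $D_x \varphi_n$).
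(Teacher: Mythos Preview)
Your proposal is correct and follows the natural route; the paper does not give a detailed proof of this lemma, stating only that the equivalence is ``straightforward to check,'' and your argument is precisely the standard unpacking one would expect. The two points you flag---the use of $a_0 \geq 0$ together with $D_{zz}^2\Phi \leq X$ in the forward direction, and the passage to the limit along closure sequences via $C^2$-convergence of $\varphi_n$ and the local Lipschitz bound on $\ov H$ from Assumption~\ref{assump.d1}---are exactly the details needed to make the equivalence rigorous.
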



The next proposition establishes the comparison principle under the additional assumption that either $U$ or $V$ satisfies the regularity condition 
\begin{align} 
    &|\Phi(t,z,m) - \Phi(t,z,m')| \leq C \|m - m'\|_{-k}, \notag\\
    & \text{for some $C > 0$, for all $t \in [0,T], z \in \T^d$, $m,m' \in \cP(\T^d)$}. \label{hslip}
\end{align}

\begin{prop} \label{prop.partialcomparison}
    Assume that $U,V : [0,T] \times \T^d \times \mathcal{P}(\T^d) \rightarrow \R$ are a viscosity sub-solution and a viscosity super-solution to \eqref{hjbz}, respectively. Assume moreover that either $U$ or $V$ satisfies the Lipschitz condition \eqref{hslip}. Then, 
    \begin{align*}
        U(t,z,m) \leq V(t,z,m)
    \end{align*}
    for all $(t,z,m) \in [0,T] \times \T^d \times \mathcal{P}(\T^d)$.
\end{prop}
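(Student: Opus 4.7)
The plan is to run a standard doubling-of-variables argument, penalizing the measure variables with the squared $H^{-k}$-distance and the finite-dimensional variables with classical quadratic penalties, in the spirit of \cite{Soner2022, bayraktar2023}. Without loss of generality, assume that the subsolution $U$ satisfies \eqref{hslip} with constant $L$ (the other case is symmetric). Suppose for contradiction that $\sup (U-V) > 0$. By replacing $U$ with $U - \gamma/(T-t)$ and then sending $\gamma \to 0^+$ at the end, we may assume that $U$ is a \emph{strict} subsolution, and that the supremum of $U-V$ is still strictly positive and attained in $[0,T) \times \T^d \times \cP(\T^d)$.

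For small parameters $\epsilon, \eta, \delta > 0$, I would consider the penalized functional
\begin{equation*}
    \Psi(t,s,z_1,z_2,m_1,m_2) = U(t,z_1,m_1) - V(s,z_2,m_2) - \frac{|t-s|^2}{2\epsilon} - \frac{|z_1 - z_2|^2}{2\eta} - \frac{1}{2\delta} \|m_1 - m_2\|_{-k}^2.
\end{equation*}
Since $\cP(\T^d)$ is compact in $\bd_1$ (and, since $k > d/2 + 2$, this topology coincides with that inherited from $H^{-k}$), a maximizer $(\hat t, \hat s, \hat z_1, \hat z_2, \hat m_1, \hat m_2)$ exists. Standard penalization lemmas give $|\hat t - \hat s|^2/\epsilon, |\hat z_1 - \hat z_2|^2/\eta, \|\hat m_1 - \hat m_2\|_{-k}^2/\delta \to 0$ along appropriate limits, and eventually the maximum is attained in $(0,T)$. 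The key observation is that at the maximum,
\begin{equation*}
    \frac{\|\hat m_1 - \hat m_2\|_{-k}^2}{\delta} \le U(\hat t, \hat z_1, \hat m_1) - U(\hat t, \hat z_1, \hat m_2) \le L \|\hat m_1 - \hat m_2\|_{-k},
\end{equation*}
so $\|\hat m_1 - \hat m_2\|_{-k} \le L\delta$ and the covector $\xi := (\hat m_1 - \hat m_2)^*/\delta \in H^k$ satisfies $\|\xi\|_{H^k} \le L$. Through the Sobolev embedding $H^k \hookrightarrow C^2$, this yields a \emph{uniform} bound $\|D_x \xi\|_{\infty} \le CL$ independent of $\delta$.

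Next, I would invoke Lemma 2.6 to write the sub/superjet conditions and apply the finite-dimensional Ishii lemma in the variables $(t,s,z_1,z_2)$ to obtain matrices $X_1, X_2$ controlling the $z$-Hessians with $X_1 - X_2 \le 0$ (modulo vanishing terms). Subtracting the subsolution inequality for $U$ from the supersolution inequality for $V$, the $\partial_t$ and $a_0 \Delta_z$ contributions cancel up to $o(1)$ by Ishii, while the linear $\int \tr(D_x D_m)$ terms collapse (the test function is separable in $m_1, m_2$, so only a single Laplacian in $x$ of $\xi$ appears on each side, and these cancel). What remains is the Hamiltonian difference
\begin{equation*}
    \int_{\T^d} \ov H(x, \hat z_1, D_x \xi(x), \hat m_1) \hat m_1(dx) - \int_{\T^d} \ov H(x, \hat z_2, D_x \xi(x), \hat m_2) \hat m_2(dx) + \text{strict gap} > 0,
\end{equation*}
plus error terms from mollifying the test function on the $m$-side to fit into $\cC^{1,2,2}_{\text{p}}$ (using Lemma \ref{lem.c2moll} or the density of $C^1(H^{-k})$ functions in the test class).

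The final step is to bound this Hamiltonian difference by $o(1)$ as the parameters vanish. Using the definition $\ov H(x,z,p,m) = H(x+z, p, m^z)$ and Assumption \ref{assump.d1}, together with the uniform bound $\|D_x \xi\|_\infty \le CL$, each difference is controlled by $C(1+L)(|\hat z_1 - \hat z_2| + \bd_1(\hat m_1, \hat m_2))$. Since $\bd_1 \le C \|\cdot\|_{-k}$ on $\cP(\T^d)$, this tends to $0$ by the penalization estimates, contradicting the strict gap. The \textbf{main obstacle}, and the reason the extra regularity \eqref{hslip} is essential, is precisely this last step: without the a priori bound $\|\xi\|_{H^k} \le L$, the gradient $D_x\xi$ could blow up like $\delta^{-1}$ and the quadratic growth of $H$ in $p$ would overwhelm the $\bd_1$ smallness of $\hat m_1 - \hat m_2$. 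The $H^{-k}$-Lipschitz hypothesis converts the penalization estimate into a genuine $H^k$-bound on the dual variable, which is exactly what is needed to close the argument.
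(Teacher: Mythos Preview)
Your strategy is the paper's, but two of the steps you flag as routine are not.

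First, the idiosyncratic noise contributions do \emph{not} cancel. After subtracting the two viscosity inequalities, the leftover term is $\int_{\T^d} \Delta_x\xi\, d(\hat m_1 - \hat m_2)$ with $\xi = \delta^{-1}(\hat m_1 - \hat m_2)^*$, which is nonzero in general since the two sides are integrated against different measures. What saves the argument is its \emph{sign}: this term equals $\delta^{-1}\langle (\hat m_1-\hat m_2)^*, \Delta_x(\hat m_1-\hat m_2)^*\rangle_k \le 0$ by the non-positivity of the Laplacian on $H^k$. This is the paper's term $I_3$, and the sign argument is essential.

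Second, you cannot directly ``apply the finite-dimensional Ishii lemma in the variables $(t,s,z_1,z_2)$'', because $U$ and $V$ also depend on the infinite-dimensional variables $m_1, m_2$, and the penalty $\|m_1-m_2\|_{-k}^2$ couples them. The paper resolves this with a parameterized Ishii lemma (Lemma~\ref{Ishii'sLemma}): the Hilbert-space inequality of Lemma~\ref{HilbertianInequality} bounds $\|m_1-m_2\|_{-k}^2$ above by an expression that is \emph{separable} in $m_1$ and $m_2$; one then freezes the measure variables via $U_0(s,y)=\sup_\mu U_1(s,y,\mu)$ and $V_0(t,z)=\inf_\nu V_1(t,z,\nu)$, applies the classical Ishii lemma to these genuinely finite-dimensional functions, and finally recovers elements of the closure jets $\bar J^{2,\pm}$ for the original $U,V$ along maximizing/minimizing sequences $\mu_n\to\hat m_1$, $\nu_n\to\hat m_2$. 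This decoupling is the main technical content of the proof beyond the standard doubling, and your sketch skips it entirely. (A minor additional point: your penalization $-\gamma/(T-t)$ forces the maximum away from $t=T$ but not from $t=0$; since Definition~\ref{def.viscosityz} only tests at interior times, the paper also subtracts $\gamma(1/t-1/T)$.)
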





\begin{proof}
We suppose that
\begin{align} \label{stc}
    \sup_{t,z,m} \Big( U(t,z,m) - V(t,z,m) \Big) > 0,
\end{align}
and then derive a contradiction through several steps. \newline \newline 
\noindent \textit{Step 1: Basic penalizations and related estimates.} For $\gamma > 0$, define
	\[
		U^\gamma(t,z,\mu) := U(t,z,\mu) - \gamma(T-t) - \gamma\left( \frac{1}{t} - \frac{1}{T}\right).
	\]
	It is straightforward to check that 
	\begin{equation}\label{penU}
		\left\{
		\begin{split}
		&U^\gamma(T,\cdot,\cdot) = U(T,\cdot, \cdot), \quad U^\gamma \le U, \quad \lim_{t \to 0^+} U^\gamma = -\infty, \quad \text{and}\\
		&\text{$U^\gamma$ is a subsolution of \eqref{hjbz} with right-hand side $-\gamma$.}
		\end{split}
		\right.
	\end{equation}
	By \eqref{stc} and compactness of $[0,T] \times \T^d \times \mathcal{P}(\T^d)$, we can choose $\gamma > 0$ small enough that
	\begin{equation}\label{firstmax}
		(t,z,m) \mapsto U^{\gamma}(t,z,m) - V(t,z,m)
	\end{equation}
	attains a strictly positive maximum value $\omega > 0$ at some $(t_*,z_*,m_*)$.
 In view of the penalizing property \eqref{penU} and the fact that $U^{\gamma}(T,\cdot,\cdot) = U(T,\cdot, \cdot) \leq V(T,\cdot, \cdot)$, we have $t_* \in (0,T)$. For $\epsilon >0$ we define the functional $\Phi^{\epsilon} : [0,T]^2 \times (\T^d)^2 \times \big(\mathcal{P}(\T^d)\big)^2 \to \R$ by
 $$\Phi^{\epsilon}(\theta) = U^{\gamma}(s,y, \mu) - V(t,z,\nu) - \frac{1}{2\epsilon} \norm{\mu - \nu}_{-k}^2 - \frac{1}{2 \epsilon} |y-z|^2 - \frac{1}{2\epsilon} |t-s|^2$$
 where $\theta = (s,t,y,z,\mu,\nu)$ is a generic element of $[0,T]^2 \times (\T^d)^2 \times \big(\mathcal{P}(\T^d) \big)^2.$
 Moreover, we define 
 $$\displaystyle M^{\epsilon} := \sup_{\theta \in [0,T]^2 \times (\T^d)^2 \times \bigl(\mathcal{P}(\T^d) \bigr)^2} \Phi^{\epsilon}(\theta)$$
 and we denote by $ \bar{\theta} := (\bar{s} , \bar{t} , \bar{y} , \bar{z} , \bar{\mu}, \bar{\nu})$ a maximizer of $\Phi^{\epsilon}$. The map $\epsilon \rightarrow M^{\epsilon}$ is non-increasing and lower bounded by $\omega$ and therefore converges as $\epsilon \rightarrow 0^+$. In view of the inequality
$$M^{\epsilon} \leq M^{\epsilon/2} - \frac{1}{2\epsilon} \norm{\bar{\mu} - \bar{\nu}}_{-k}^2 - \frac{1}{2 \epsilon} |\bar{y}-\bar{z}|^2 - \frac{1}{2\epsilon} |\bar{s}-\bar{t}|^2 $$
and the fact that $\lim_{\epsilon \rightarrow 0^+} M^{\epsilon} - M^{\epsilon/2} = 0$ we conclude that 
\begin{equation}
\lim_{\epsilon \rightarrow 0^+} \frac{1}{2\epsilon} \norm{\bar{\mu} - \bar{\nu}}_{-k}^2 + \frac{1}{2 \epsilon} |\bar{y}-\bar{z}|^2 + \frac{1}{2\epsilon} |\bar{s}-\bar{t}|^2=0 
\label{Penalizationgoesto0}
\end{equation}
and every (weak) limit point of $(\bar{s}, \bar{y} , \bar{\mu})$ and $(\bar{t} , \bar{z}, \bar{\nu})$ must coincide and be a maximum of $U^{\gamma}-V$. In particular $\bar{s}, \bar{t}$ belong to $(0,T)$ for $\epsilon$ small enough. 
\newline \newline 
\noindent \textit{Step 2: Estimates from regularity.} Assume without loss of generality that $U$ is Lipschitz continuous in $\mu$ with respect to $\norm{\cdot}_{-k}$ with Lipschitz constant $L > 0$, uniformly in $(t,z)$. Then, comparing $M^{\epsilon}$ with $\Phi^{\epsilon}(\bar{s} , \bar{t} , \bar{y} , \bar{z} , \bar{\nu} , \bar{\nu})$, we have
$$ \frac{1}{2 \epsilon} \norm{\bar{\mu} - \bar{\nu}}_{-k}^2 \leq U^{\gamma}( \bar{s} , \bar{y}, \bar{\mu} ) - U^{\gamma} ( \bar{s}, \bar{y} ,\bar{\nu}) \leq L \norm{ \bar{\mu} - \bar{\nu}}_{-k}   $$
which leads to
\begin{equation} 
\frac{1}{\epsilon} \norm{\bar{\mu} - \bar{\nu}}_{-k} \leq 2  L.
\label{estimatefromregularity}
\end{equation}
Since $k \geq d/2$, the classical Sobolev embedding implies also that
\begin{equation} 
\frac{1}{\epsilon} \norm{ (\bar{\mu} - \bar{\nu})^{*}}_{\infty} \leq \frac{C}{\epsilon} \norm{ (\bar{\mu} - \bar{\nu})^{*} }_k = \frac{C}{\epsilon} \norm{ \bar{\mu} - \bar{\nu}}_{-k} \leq 2CL,
\label{estimateLinftyfromregularity}
\end{equation}
for some $C >0$ depending on dimension only.

At this stage, we need the following key lemma, which is a sort of ``parameterized version" of the classical Ishii's lemma in finite dimensions. 

\begin{lem}
    Let $\bar{\theta} = (\bar{s} , \bar{t} , \bar{y} , \bar{z} , \bar{\mu}, \bar{\nu})$ be a local maximum of $\Phi^{\epsilon}$. For every $\kappa >0$ we can find two matrices $X,Y \in S^d(\R)$ satisfying the matrix inequality
\begin{equation} 
-\bigl(\frac{1}{\kappa} + \frac{1}{\epsilon} \bigr) I_{2d} \leq \begin{pmatrix} X & 0 \\ 0 & -Y \end{pmatrix} \leq \frac{1 + 2 \kappa}{2 \epsilon} \begin{pmatrix} I_d & -I_d \\ -I_d & I_d \end{pmatrix} 
\label{MatrixInequality9oct2023}
\end{equation}
and such that
\begin{equation}
    \begin{array}{ll}
\Bigl( \frac{1}{\epsilon}(\bar{s} - \bar{t}), \frac{1}{\epsilon}(\bar{y}- \bar{z}), X, \frac{1}{\epsilon} ( \bar{\mu} - \bar{\nu})^{*} \Bigr) \in \bar{J}^{2,+} U^{\gamma} \bigl(\bar{s} , \bar{y}, \bar{\mu} \bigr), \\
\Bigl( \frac{1}{\epsilon}(\bar{s} - \bar{t}), \frac{1}{\epsilon}(\bar{y}- \bar{z}), Y, \frac{1}{\epsilon} ( \bar{\mu} - \bar{\nu})^{*} \Bigr) \in \bar{J}^{2,-} V \bigl(\bar{t} , \bar{z}, \bar{\nu} \bigr).
    \end{array}
\label{IshiisInclusion21oct}
\end{equation}


\label{Ishii'sLemma}
\end{lem}
We continue with the proof of the comparison principle. Using the sub-solution property, as in Lemma \ref{equivalencedefviscositysolutions}, for $U^{\gamma}$ at $(\bar{s}, \bar{y}, \bar{\mu})$
and we get
\begin{equation}
    -\frac{1}{\epsilon}(\bar{s} - \bar{t}) - a_0\tr{X} -\int_{\T^d} \Delta_x ( \bar{\mu} - \bar{\nu} )^{*}(x) d\bar{\mu}(x)  + \int_{\T^d} H \Bigl(x + \bar{y}, \frac{1}{\epsilon} ( \bar{\mu} - \bar{\nu} )^{*}, \bar{\mu} \Bigr) d\bar{\mu}(x) \leq -\gamma. 
\end{equation}
Arguing similarly for $V$ we find
\begin{equation}
    -\frac{1}{\epsilon}(\bar{s} - \bar{t}) - a_0\tr{Y} -\int_{\T^d} \Delta_x ( \bar{\mu} - \bar{\nu} )^{*}(x) d\bar{\nu}(x) + \int_{\T^d} H \Bigl(x + \bar{z}, \frac{1}{\epsilon} ( \bar{\mu} - \bar{\nu} )^{*} , \bar{\nu} \Bigr) d\bar{\nu}(x) \geq 0. 
\end{equation}
Subtracting the two inequalities and rearranging then leads to
\begin{equation}
\gamma \leq I_1 + I_2 + I_3
\label{tobecontradicted9octobre}
\end{equation}
where 
\begin{align*}
    I_1 &:=\int_{\T^d} H \Bigl( x + \bar{z}, \frac{1}{\epsilon}(\bar{\mu} - \bar{\nu})^{*}, \bar{\nu} \Bigr) d \bar{\nu}(x) - \int_{\T^d} H \Bigl( x + \bar{y}, \frac{1}{\epsilon}(\bar{\mu} - \bar{\nu})^{*}, \bar{\mu} \Bigr) d \bar{\mu}(x), \\
    I_2 &:=a_0 \tr(X-Y), \\
    I_3 &:= \int_{\T^d} \Delta_x ( \bar{\mu} - \bar{\nu} )^{*}(x) d( \bar{\mu} - \bar{\nu})(x).
\end{align*}
In view respectively of the matrix inequality \eqref{MatrixInequality9oct2023} in Lemma \ref{Ishii'sLemma} as well as the non-positivity of the Laplacian in $H^k$, we know that $I_2 \leq 0$ and $I_3 \leq 0$. It remains to estimate $I_1.$ We rewrite 
\begin{align*}
    &I_1 = I_{1,a} + I_{1,b}, \text{ with } \\
    &I_{1,a} \coloneqq \int_{\T^d} H \Bigl( x + \bar{z}, \frac{1}{\epsilon}(\bar{\mu} - \bar{\nu})^{*}, \bar{\mu} \Bigr) d (\bar{\nu} - \bar{\mu})(x), \\
    &I_{1,b} \coloneqq \int_{\T^d} \Big[ H\Bigl( x + \bar{z}, \frac{1}{\epsilon}(\bar{\mu} - \bar{\nu})^{*}, \bar{\nu} \Bigr) -H  \Bigl( x + \bar{y}, \frac{1}{\epsilon}(\bar{\mu} - \bar{\nu})^{*}, \bar{\mu} \Bigr) \Bigr] d \bar{\mu}(x).
\end{align*}
On the one hand, using Assumption \ref{assump.d1} as well as \eqref{estimatefromregularity} (with $k \geq d/2+1)$), we see that 
\begin{align*}
    x \mapsto H\Big(x + \bar{z}, \frac{1}{\epsilon} (\bar{\mu} - \bar{\nu})^{*}(x), \bar{\mu} \Big)
\end{align*}
is, uniformly in $\epsilon$, H\"older continuous. Since $\| \bar{\mu} - \bar{\nu}\|_{-k} \rightarrow 0$ as $\epsilon \rightarrow 0$, we conclude easily that
\begin{align*}
    \lim_{\epsilon \to 0^+} I_{1,a} = 0.
\end{align*}
On the other hand, using Assumption \eqref{assump.d1}
and recalling \eqref{estimateLinftyfromregularity} we find that 
$$|I_{1,b}| \leq C \bigl(| \bar{z} - \bar{y}| + \bd_1( \bar{\mu}, \bar{\nu}) \bigr),$$
for some $C>0$ independent from $\epsilon$. From \eqref{Penalizationgoesto0}, we have $\lim_{\epsilon \rightarrow 0} (\bar{z} - \bar{y} ) =0$ and $\lim_{\rightarrow 0} \norm{ \bar{\mu} - \bar{\nu} }_{-k} = 0$ which leads to
$$\lim_{\epsilon \rightarrow 0} I_{1,b} =0.$$
Coming back to \eqref{tobecontradicted9octobre} we find, letting $\epsilon \rightarrow 0$,
$\gamma \leq 0,$
which is the desired contradiction.
\end{proof}

It remains to prove Lemma \ref{Ishii'sLemma}, which we do similarly as in \cite{bayraktar2023}. We first prove the following elementary lemma.

\begin{lem}
    Let $H$ be a Hilbert space with norm $\norm{\cdot}$ and the inner product of $\mu,\nu \in H$ denoted by $\mu \cdot \nu$. Take $\mu,\nu,\bar{\mu}, \bar{\nu}$ in $H$. Then
    $$ \norm{\mu-\nu}^2 \leq 2 \mu \cdot (\bar{\mu} - \bar{\nu}) - 2 \nu \cdot (\bar{\mu} - \bar{\nu}) +2 \norm{\mu- \bar{\mu} }^2 +2 \norm{\nu - \bar{\nu} }^2 
 - \norm{\bar{\mu} - \bar{\nu}}^2$$
 and inequality holds if and only if $\mu- \bar{\mu} = \nu - \bar{\nu}$.
 \label{HilbertianInequality}
\end{lem}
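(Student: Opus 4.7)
The plan is a direct algebraic expansion using only the bilinearity of the inner product. Set $a := \mu - \bar\mu$ and $b := \nu - \bar\nu$, so that $\mu = \bar\mu + a$ and $\nu = \bar\nu + b$, and consequently
\[
\mu - \nu = (\bar\mu - \bar\nu) + (a - b), \qquad \mu \cdot (\bar\mu - \bar\nu) - \nu \cdot (\bar\mu - \bar\nu) = (a-b)\cdot(\bar\mu - \bar\nu) + \|\bar\mu - \bar\nu\|^2.
\]
Expanding the left-hand side of the claimed inequality via $\|u+v\|^2 = \|u\|^2 + 2 u\cdot v + \|v\|^2$ yields
\[
\|\mu - \nu\|^2 = \|\bar\mu - \bar\nu\|^2 + 2(a-b)\cdot(\bar\mu - \bar\nu) + \|a - b\|^2,
\]
while substituting into the right-hand side gives
\[
2(a-b)\cdot(\bar\mu - \bar\nu) + \|\bar\mu - \bar\nu\|^2 + 2\|a\|^2 + 2\|b\|^2.
\]

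Subtracting, the terms $2(a-b)\cdot(\bar\mu - \bar\nu)$ and $\|\bar\mu - \bar\nu\|^2$ cancel, leaving
\[
\text{RHS} - \text{LHS} = 2\|a\|^2 + 2\|b\|^2 - \|a - b\|^2 = \|a\|^2 + 2\,a\cdot b + \|b\|^2 = \|a+b\|^2 \geq 0,
\]
which is precisely the parallelogram identity applied to $a$ and $b$. This proves the inequality, and shows that equality holds exactly when $a + b = 0$, i.e.\ when $(\mu - \bar\mu) + (\nu - \bar\nu) = 0$.

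There is no genuine obstacle here: the lemma is simply the parallelogram law rearranged, and the proof reduces to two lines of bookkeeping once the substitutions $a$ and $b$ are introduced.
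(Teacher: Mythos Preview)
Your proof is correct and follows essentially the same route as the paper: both reduce the inequality to the parallelogram identity $2\|a\|^2 + 2\|b\|^2 - \|a-b\|^2 = \|a+b\|^2 \ge 0$ with $a = \mu - \bar\mu$, $b = \nu - \bar\nu$. Note that your equality condition $(\mu - \bar\mu) + (\nu - \bar\nu) = 0$ is the right one; the condition $\mu - \bar\mu = \nu - \bar\nu$ written in the lemma statement (and repeated in the paper's proof) is a sign slip, though it is harmless for the application since at the relevant point $a = b = 0$.
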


\begin{proof}
Developing the square $ \norm{\mu- \bar{\mu} + \bar{\nu} - \nu}^2$ and rearranging leads to
$$\norm{\mu-\nu}^2 + \norm{\bar{\mu} - \bar{\nu}}^2 = 2\mu\cdot(\bar{\mu} - \bar{\nu}) - 2\nu \cdot (\bar{\mu} - \bar{\nu}) + \norm{\mu-\bar{\mu} + \bar{\nu} - \nu}^2.$$
It remains to notice that
$$\norm{\mu - \bar{\mu} + \bar{\nu} - \nu}^2 \leq 2 \norm{\mu- \bar{\mu} }^2 + 2 \norm{\nu - \bar{\nu}}^2$$
with equality if and only if $\mu- \bar{\mu} = \nu - \bar{\nu}.$
\end{proof}

\begin{proof}[Proof of Lemma \ref{Ishii'sLemma}]
Up to introducing a further penalization of the form 
$$(s,t,y,z,\mu,\nu) \rightarrow  |s-\bar{s}|^4 + |t-\bar{t}|^4 + |y - \bar{y}|^4 + |z- \bar{z}|^4 + \norm{\mu - \bar{\mu}}_{-k}^4 + \norm{\nu - \bar{\nu}}_{-k}^4$$
and redefining $U$ and $V$ outside a neighborhood of $\bar{\theta}$ we can assume that the maximum is unique. 


From Lemma \ref{HilbertianInequality} we deduce that $\bar{\theta} = (\bar{s} , \bar{t}, \bar{y}, \bar{z}, \bar{\mu}, \bar{\nu})$ is the unique maximum, over $[0,T]^2 \times (\T^d)^2 \times \big(\mathcal{P}(\T^d)\big)^2$, of 
\begin{equation}
    (s,t,y,z,\mu,\nu) \mapsto U^{\gamma}_1(s,y,\mu) - V_1(t,z,\nu) - \frac{1}{2\epsilon} |y- z|^2 - \frac{1}{2\epsilon} |s-t|^2 
\label{maxforU1V121Oct}
\end{equation}
where $U^{\gamma}_1$ and $V_1$ are defined by
\begin{align*}
    U^{\gamma}_1(s,y,\mu) &:= U^{\gamma}(s,y,\mu) - \frac{1}{\epsilon} \ip{ \mu , (\bar{\mu} - \bar{\nu} ) }_{-k} - \frac{1}{\epsilon} \norm{ \mu - \bar{\mu}}^2_{-k}, \\
    V_1(t,z,\nu) &:= V(t,z,\nu) + \frac{1}{\epsilon} \ip{ \nu , (\bar{\mu} - \bar{\nu}) }_{-k} + \frac{1}{\epsilon} \norm{ \nu - \bar{\nu} }^2_{-k}.
\end{align*}
Letting 
$$U^{\gamma}_0(s, y) := \sup_{\mu \in \mathcal{P}(\T^d)} U_1^{\gamma}(s,y,\mu),\quad V_0(t,z) := \inf_{\nu \in \mathcal{P}(\T^d)} V_1(t,z,\nu)   $$
we deduce that $(\bar{s} ,\bar{t} , \bar{y} , \bar{z})$ is the unique maximum, over $[0,T]^2 \times \bigl( \T^d \bigr)^2 $, of 
$$(s,t,y,z) \mapsto U^{\gamma}_0(s,y) - V_0(t,z) - \frac{1}{2\epsilon} |y - z|^2 - \frac{1}{2\epsilon} |s-t|^2.$$
Moreover, if $\mu', \nu' \in \mathcal{P}(\T^d)$ are such that
$$U_0^{\gamma}(\bar{s}, \bar{y}) = U_1^{\gamma}( \bar{s} , \bar{y}, \mu'), \quad V_0(\bar{t}, \bar{z}) = V_1( \bar{t} , \bar{z}, \nu'), $$
then, $(\bar{s} , \bar{t}, \bar{y}, \bar{z}, \mu', \nu')$ is a maximum of \eqref{maxforU1V121Oct} and therefore, $\mu' = \bar{\mu}$ and $\nu' = \bar{\nu}.$

Using the classical Ishii's Lemma, see \cite{Crandall1992}, we find, for every $\kappa >0$, matrices $X,Y \in S^d(\R)$ satisfying the matrix inequality \eqref{MatrixInequality9oct2023}, and such that 
\begin{equation}
\begin{array}{ll}
    \Bigl( \frac{1}{\epsilon}( \bar{s} - \bar{t} ) , \frac{1}{\epsilon} ( \bar{y} - \bar{z} ), X \Bigr) \in \bar{J}^{2,+} U_0^{\gamma} (\bar{s} , \bar{y}), \\
   \Bigl( \frac{1}{\epsilon}( \bar{s} - \bar{t} ) , \frac{1}{\epsilon} ( \bar{y} - \bar{z} ), Y \Bigr) \in \bar{J}^{2,-}V(\bar{t}, \bar{z}),  
\end{array}
\end{equation}
where semi-jets are understood in the classical parabolic sense for functions defined over $[0,T] \times \T^d$.
Now we take sequences $(s_n, y_n)_{n \geq 0}$, $(r_n, p_n, X_n)_{n \geq 0}$ converging respectively to $(\bar{s}, \bar{y})$ and $(\frac{1}{\epsilon}( \bar{s} - \bar{t}), \frac{1}{\epsilon}( \bar{y}- \bar{z}) , X)$ and such that
\begin{equation}
(r_n, p_n, X_n) \in J^{2,+} U_0^{\gamma}(s_n , y_n), \quad \forall n \geq 0,
\label{SuperJetU021oct}
\end{equation}
and
\begin{equation}
    \lim_{n \rightarrow +\infty} U_0^{\gamma}(s_n , y_n) = U_0^{\gamma}(\bar{s} , \bar{y}).
\end{equation}
We also take $\mu_n \in \mathcal{P}(\T^d)$ such that, for all $n \geq 0$,
\begin{equation}
U_0^{\gamma}(s_n, y_n) = U_1^{\gamma}(s_n,y_n,\mu_n).
\end{equation}
Since $U_1^{\gamma}$ is upper semi-continuous over $(0,T) \times \T^d \times \mathcal{P}(\T^d)$, every limit point $(\bar{s}, \bar{y}, \mu')$ of $(s_n,y_n,\mu_n)$ must verify
\begin{equation}
U_0^{\gamma}(\bar{s} , \bar{y}) = \lim_{n \rightarrow +\infty} U_1^{\gamma}(s_n , y_n, \mu_n) \leq U_1^{\gamma}( \bar{s} , \bar{y}, \mu')
\label{NoName21Oct2023}
\end{equation}
and therefore $\mu' = \bar{\mu}$ by strict maximality of $\bar{\mu}$ for $U^{\gamma}_0(\bar{t}, \bar{z}_1).$
By compactness of $\mathcal{P}(\T^d)$ we deduce that $(s_n, y_n, \mu_n)$ converges to $(\bar{s}, \bar{y}, \bar{\mu})$ as $n \rightarrow +\infty$ and, from \eqref{NoName21Oct2023}, we have
$$\lim_{n \rightarrow +\infty} U_1^{\gamma}(s_n,y_n,\mu_n) = U_1^{\gamma}(\bar{s} , \bar{y}, \bar{\mu} ).$$

Using \eqref{SuperJetU021oct} as well as the definitions of $U_1^{\gamma}$ and $U^{\gamma}_0$ find that there is some smooth function $\phi_n : [0,T] \times \T^d \rightarrow \R$ satisfying  
$$\bigl( \partial_t \phi_n, D \phi_n \bigr) (s_n,y_n) = (r_n,p_n), \quad D^2 \phi_n (s_n,y_n) \leq X_n,$$
such that
$$(s,y,\mu) \mapsto U^{\gamma}(s,y,\mu) - \frac{1}{\epsilon} \ip{\mu \cdot (\bar{\mu} - \bar{\nu})}_{-k} - \frac{1}{\epsilon} \norm{\mu - \bar{\mu}}^2_{-s} - \phi_n(s,y) $$
achieves its maximum at $(s_n , y_n, \mu_n)$ and therefore,
$$ \Bigl( r_n , p_n, X_n, \frac{1}{\epsilon}(\bar{\mu} - \bar{\nu})^{*} + \frac{2}{\epsilon}(\mu_n - \bar{\mu})^{*} \Bigr) \in J^{2,+} U^{\gamma}(s_n, y_n, \mu_n).$$
Since $(s_n, y_n, \mu_n)_{n \geq 0}$ converges to $(\bar{s} , \bar{y}, \bar{\mu})$ and $(r_n,p_n,X_n)_{n \geq 0}$ to $\Bigl(\frac{1}{\epsilon}( \bar{s} - \bar{t}), \frac{1}{\epsilon}( \bar{y}- \bar{z}) , X \Bigr) $  this leads to the first inclusion in \eqref{IshiisInclusion21oct}. We prove the second one similarly. This concludes the proof of the lemma.

\end{proof}

We now remove the assumption in Propostion \ref{prop.partialcomparison} that one of the solutions be Lipschitz continuous in the $H^{-k}$-metric, thereby completing the proof of the comparison principle in Theorem \ref{thm.comparison}.

\begin{proof}[Proof of Theorem \ref{thm.comparison}]
    Up to replacing $U^1$ by $U^1 - \delta (T-t)$ and $U^2$ by $U^2 + \delta (T-t)$ we may assume, without loss of generality, that we have
    \begin{align}
\begin{cases}
    \ds - \partial_t U^1 - (1 + a_0) \int_{\T^d} \tr(D_x D_m U^1\big)(t,m,x)  m(dx) - a_0 \int_{\T^d} \int_{\T^d} \tr\big(D_{mm} U^1\big)(t,m,x,y) m(dx) m(dy) \vspace{.1cm} \\ \ds
     \hspace{2cm} + \int_{\T^d} H\big(x, D_m U^1(t,m,x), m \big) m(dx) \leq - \delta, \quad (t,m) \in [0,T] \times \cP(\T^d), \vspace{.1cm} \\
    U^1(T,m) \leq  G(m) - \delta, \quad m \in \cP(\T^d)
    \end{cases}
\end{align}
in a viscosity sense for some $\delta > 0$, and likewise we have 
\begin{align}
\begin{cases}
    \ds - \partial_t U^2 - (1 + a_0) \int_{\T^d} \tr(D_x D_m U^2\big)(t,m,x)  m(dx) - a_0 \int_{\T^d} \int_{\T^d} \tr\big(D_{mm} U^2\big)(t,m,x,y) m(dx) m(dy) \vspace{.1cm} \\ \ds
     \hspace{2cm} + \int_{\T^d} H\big(x, D_m U^2(t,m,x), m \big) m(dx) \geq  \delta, \quad (t,m) \in [0,T] \times \cP(\T^d), \vspace{.1cm} \\
    U^2(T,m) \geq  G(m) + \delta, \quad m \in \cP(\T^d).
    \end{cases}
\end{align}
    We now apply Lemma \ref{lem.mollification} to produce $(H^{n}, G^{n})_{n \in \N}$ satisfying conditions (1)-(3) in the statement of Lemma \ref{lem.mollification}. Next, let $U^{n}$ denote a solution to \eqref{hjb} but with $G$ and $H$ replaced by $G^{n}$ and $H^{n}$. Thanks to Proposition \ref{prop.existsmooth}, we may assume that $U^{n}$ satisfies \eqref{regintro} for each $k \in \N$, and also that $U^{n}$ satisfy \eqref{d1regintro} uniformly in $n$. Now let $\ov{U}^n(t,z,m) = U(t,m^z)$. First, we note that the functions $\ov{U}^n$ are $\bd_1$-Lipschitz in $m$, uniformly in $n$. As a consequence, we can find a constant $C_0$ with the following property: for any $\Phi \in \cpart$ and any $(t_0,z_0,m_0)$ such that $\ov{U}^n - \Phi$ is maximized at $(t_0,z_0,m_0)$, we have $|D_m \Phi(t_0,z_0,m_0,x)| \leq C_0$ for each $x \in \text{supp}(m_0)$ (see e.g. Proposition 5.1 in \cite{Soner2023} for a simple proof). Now let $c_n$ be a sequence with $c_n \downarrow 0$ such that 
    \begin{align*}
        \sup_{(x,p,m) : |p| \leq C_0} |H(x,p,m) - H^n(x,p,m)| \leq c_n, \text{ and } \sup_m |G(m) - G^n(m)| \leq c_n.
    \end{align*}
    Then we have
    \begin{align*} 
        \ds &- \partial_t \Phi(t_0,z_0,m_0) - \int_{\T^d} \tr(D_x D_m \Phi \big)(t_0,z_0,m_0,x)  m_0(dx) - a_0 \Delta_z \Phi (t_0,z_0,m_0) \vspace{.1cm} \nonumber \\
    \ds &\hspace{2cm} + \int_{\T^d} \ov{H}\big(x, D_m \Phi(t_0,z_0,m_0,x), m_0,z_0 \big) m_0(dx) \\
    \ds &\leq - \partial_t \Phi(t_0,z_0,m_0) - \int_{\T^d} \tr(D_x D_m \Phi \big)(t_0,z_0,m_0,x)  m_0(dx) - a_0 \Delta_z \Phi (t_0,z_0,m_0) \vspace{.1cm} \nonumber \\
    \ds &\hspace{2cm} + \int_{\T^d} \ov{H}^n\big(x, D_m \Phi(t_0,z_0,m_0,x), m_0,z_0 \big) m_0(dx) + c_n \leq c_n,
    \end{align*}
    and also $\ov{U}^n(T,z,m) \leq G^n(t,m^z) + c_n$.

In particular, choosing $n$ large enough that $c_{n} < \delta$, we can use Proposition \ref{prop.partialcomparison} to conclude that, for large enough $n$, $U^1 \leq U^n \le U^2$, and so in particular $U^1 \leq U^2$, as required.

\end{proof}

\begin{proof}[Proof of Theorem \ref{thm.regularity}]
    Proposition \ref{prop.existsmooth} shows that there exists a solution to \eqref{hjb} satisfying the desired regularity estimates, while Theorem \ref{thm.comparison} shows that this is the unique solution.
\end{proof}

\section{Applications} \label{sec.applications}

\subsection{Mean-field control with common noise}
\label{subsec.mfc}
The goal of this subsection is to show that
\begin{enumerate}
    \item Theorem \ref{thm.comparison} allows us to identify the value function of a mean field control problem with common noise as the unique viscosity solution (in the sense of Definition \ref{def.viscosity}) of the corresponding HJB equation, and 
    \item Theorem \ref{thm.convergence} yields a new analytical proof that the value functions of certain finite-dimensional control problems converge to their mean field counterparts.
\end{enumerate}
In order to do so, we will need to use the dynamic programming principle from \cite{djete2022aop}, and so we take care to formulate the mean field problem in a way which is compatible with the main results in \cite{djete2022aop}. The data in this setting will consist of 
\begin{align*}
     (b,L) : \T^d \times A \times \cP(\T^d) \to \R^d \times \R, \quad G : \cP(\T^d) \to \R,
\end{align*}
where $A$ is, for simplicity, a compact metric space. We set define $H : \T^d \times \R^d \times \cP(\T^d) \to \R$ by
\begin{align} \label{hamdefcontrol}
    H(x,p,m) = \sup_{a \in A} \bigl \{ - L(x,a,m) - b(x,a,m) \cdot p \bigr \}.
\end{align}

We assume the following on the data:

\begin{assumption} \label{assump.control}
The functions $L$ and $b$ are continuous, and Lipschitz in $(x,m)$, uniformly in $a$. The function $G$ is Lipschitz. 
\end{assumption}

\begin{rmk}
    Under Assumption \ref{assump.control}, it is easy to check that $H$ and $G$ satisfy Assumption \ref{assump.d1}. It should be possible to relax the assumption that $L$ is uniformly Lipschitz in $(x,m)$, but this Assumption is needed to apply the results in \cite{djete2022aop}.
\end{rmk}

To define the mean field value function, we work on the canonical space 
\begin{align*}
    \Omega^0 \coloneqq \T^d \times \cC \times \cC, \text{ with } \cC = C([0,T] ; \R^d). 
\end{align*}
For each $\mu \in \cP(\T^d)$, let $\bP^{\mu}$ denote the probability measure
\begin{align*}
    \bP^{\mu} = \mu \otimes \bP^0 \otimes \bP^0, 
\end{align*}
where $\bP^0$ is the Wiener measure on $C$. Write $(x, \omega, \omega^0)$ for the general element of $\Omega^0$, and denote by $(\xi, W, W^0)$ the canonical random variable and processes on $\Omega^0$, i.e. 
\begin{align*}
\xi(x,\omega, \omega^0) = x, \quad W_t(x,\omega, \omega^0) = \omega(t), \quad  W^0_t(x,\omega, \omega^0) = \omega^0(t).
\end{align*}
Let $\bbF = (\sF_t)_{0 \leq t \leq T}$ denote the right-continuous augmentation of the filtration generated by $\xi$, $W$, and $W^0$, and let $\bbF^0 = (\sF_t^0)_{0 \leq t \leq T}$ denote the right-continuous augmentation of the filtration generated by $W^0$. With $(t_0,m_0) \in [0,T] \times \cP(\T^d)$ fixed, we denote by $\cA_{t_0,m_0}$ the set of $A$-valued processes $\alpha = (\alpha_t)_{t_0 \leq t \leq T}$, progressively measurable with respect to $\bbF$. Then we define 
\begin{align} \label{udefcontrol}
    U(t_0,m_0) = \inf_{\alpha \in \cA_{t_0,m_0}} \E^{\bP^{m_0}}\bigg[ \int_{t_0}^T L\big(X_t^{t_0,\alpha}, \alpha_t, \sL^{0,m_0}(X_t^{t_0,\alpha}) \big) dt + G\big(\sL^{0,m_0}(X_{T}^{t_0,\alpha})\big)\bigg],
\end{align}
where for each $\alpha \in \cA_{t_0, m_0}$, $X^{t_0,\alpha}$ is defined by 
\begin{align*}
    dX^{t_0,\alpha}_t = b \bigl(X_t^{t_0,\alpha}, \alpha_t, \sL^{0,m_0}(X_t^{t_0,\alpha}) \bigr) dt + \sqrt{2} dW_t + \sqrt{2a_0} dW_t^0, \quad t_0 \leq t \leq T, \quad X^{t_0,\alpha}_{t_0} = \xi,
\end{align*}
and where for a random variable $X$, $\sL^{0,m_0}$ denotes the law of $X$ conditional on $\sF^0_T$ and with respect to $\bP^{m_0}$.

The definition of the finite-dimensional value functions $V^N : [0,T] \times (\T^d)^N \rightarrow \R$ is much more standard. In particular, for each $N$ we fix a filtered probability space $(\Omega^N, \sF^N, \bbF^N = (\sF^N_t)_{0 \leq t \leq T}, \bP^N)$ satisfying the usual conditions and hosting independent $d$-dimensional Brownian motions $W^0,W^1,...,W^N$. We define 
\begin{align} \label{def.ndimvalues}
    V^N(t_0,\bx_0) = \inf_{\bm \alpha = (\alpha^1,...,\alpha^N)} \E \bigg[\int_{t_0}^T \frac{1}{N} \sum_{i = 1}^N L(X_t^i,\alpha_t^i, m_{\bX_t}^N) dt + G(m_{\bX_T}^N) \bigg]
\end{align}
where we recall the $m_{\bX_t}^N$ stands for the empirical measure $\frac{1}{N} \sum_{i=1}^N \delta_{X_t^{i}},$ and
where the infimum is taken over all $\bbF^N$-progressive $A^N$-valued processes $\bm \alpha = (\alpha^1,...,\alpha^N)$, and the state process $\bX = (X^1,...,X^N)$ is determined from $(t_0,\bx_0)$ and the control $\bm \alpha$ via the dynamics
\begin{align*}
    dX_t^i = b(X_t^i, \alpha_t^i, m_{\bX_t}^N) dt + \sqrt{2} dW_t^i + \sqrt{2a_0} dW_t^{0}, \quad t_0 \leq t \leq T, \quad X_{t_0}^i = x_0^i.
\end{align*}
Under Assumption \ref{assump.control}, $V^N$ is the unique viscosity solution of \eqref{hjbn}, with $H$ given by \eqref{hamdefcontrol}.

\begin{prop} \label{prop.control} Under Assumption \ref{assump.control}, the function $U$ defined by \eqref{udefcontrol} is the unique viscosity solution of \eqref{hjb}, with $H$ defined by \eqref{hamdefcontrol}.
\end{prop}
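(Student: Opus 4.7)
Uniqueness is immediate from Theorem \ref{thm.comparison}. The task is to verify that $U$ defined by \eqref{udefcontrol} is a viscosity solution of \eqref{hjb} in the sense of Definition \ref{def.viscosity}, which amounts to showing that $\overline U(t,z,m) := U(t,m^z)$ is a viscosity solution of \eqref{hjbz}. The plan is to combine the dynamic programming principle of \cite{djete2022aop} with an It\^o formula along a measure flow, after a change of variables that trivializes the common noise.

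Fix $(t_0,z_0,m_0)$ and run the mean field control problem from the initial distribution $m_0^{z_0}$, so that $\overline U(t_0,z_0,m_0) = U(t_0,m_0^{z_0})$. Introduce
\[
Z_t := z_0 + \sqrt{2a_0}\bigl(W_t^0 - W_{t_0}^0\bigr), \qquad \tilde Y_t := X_t^{t_0,\alpha} - Z_t, \qquad \tilde\mu_t := \sL^{0,m_0^{z_0}}(\tilde Y_t).
\]
Then $\tilde Y_{t_0} \sim m_0$, $\tilde\mu_{t_0} = m_0$, $Z_{t_0}=z_0$, and $\sL^{0,m_0^{z_0}}(X_t^{t_0,\alpha}) = \tilde\mu_t^{Z_t}$, while
\[
d\tilde Y_t = b(\tilde Y_t + Z_t, \alpha_t, \tilde\mu_t^{Z_t}) dt + \sqrt 2 dW_t, \qquad dZ_t = \sqrt{2a_0} dW_t^0.
\]
Thus $\overline U$ is the value function of a mean field control problem on $\T^d \times \cP(\T^d)$ with extended state $(Z_t, \tilde\mu_t)$, running cost $L(\tilde Y_t+Z_t,\alpha_t,\tilde\mu_t^{Z_t})$, and terminal cost $G(\tilde\mu_T^{Z_T})$. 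Because $Z_t$ is $\sF_T^0$-measurable, $\tilde\mu_t$ is $\sF_T^0$-conditionally deterministic and satisfies the Fokker-Planck equation
\[
\partial_t \tilde\mu_t = \Delta_y \tilde\mu_t - \dv_y\bigl(b(y+Z_t,\alpha_t,\tilde\mu_t^{Z_t})\tilde\mu_t\bigr);
\]
in particular $\tilde\mu_t$ carries no stochastic differential, all the $W^0$-randomness having been concentrated in the finite-dimensional variable $Z_t$.

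Under Assumption \ref{assump.control}, standard coupling estimates for controlled SDEs give that $U$ is $\bd_1$-Lipschitz in $m$ uniformly in $t$ and $\tfrac12$-H\"older in $t$, so $\overline U$ is continuous. The dynamic programming principle from \cite{djete2022aop}, transported through the change of variables above, then yields, for every $0 \le t_0 \le t_1 \le T$,
\[
\overline U(t_0,z_0,m_0) = \inf_{\alpha} \bE\bigg[\int_{t_0}^{t_1} L\bigl(\tilde Y_t+Z_t,\alpha_t,\tilde\mu_t^{Z_t}\bigr)dt + \overline U(t_1, Z_{t_1}, \tilde\mu_{t_1})\bigg].
\]

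To extract the viscosity inequalities, fix $\Phi \in \cpart$ and assume $\overline U - \Phi$ attains a strict local maximum (resp.\ minimum) at $(t_0,z_0,m_0)$. Apply It\^o's formula to $\Phi(t, Z_t, \tilde\mu_t)$: because $\tilde\mu_t$ is $\sF_T^0$-conditionally deterministic, the measure part of the chain rule produces only $D_m\Phi$ and $D_x D_m \Phi$ terms (no $D_{mm}\Phi$ term), yielding exactly the operator appearing in \eqref{hjbz}. Testing the DPP with controls of the form $\alpha_t \equiv a(\tilde Y_{t_0})$ for measurable $a:\T^d \to A$, using the identity $H(x,p,m) = \sup_{a\in A}\{-L(x,a,m) - b(x,a,m)\cdot p\}$ to perform the infimum pointwise in $y$, and sending $t_1 \downarrow t_0$ produces the subsolution inequality; the supersolution inequality follows analogously from $\varepsilon$-optimal controls. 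The main technical obstacle is to rigorously justify the It\^o formula for $\Phi \in \cpart$ along the measure flow $\tilde\mu_t$; this calculus is by now standard (see, e.g., \cite{CarmonaDelarue_book_I}), and the reformulation sidesteps the more delicate second-order It\^o terms involving $D_{mm}\Phi$ that would arise if one worked with $\sL^{0,m_0^{z_0}}(X_t)$ directly.
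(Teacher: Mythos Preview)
Your approach is essentially the same as the paper's: both introduce the shift $Z_t = z_0 + \sqrt{2a_0}(W^0_t - W^0_{t_0})$ and the recentered process $X_t - Z_t$, observe that the conditional law factors as a pushforward by $Z_t$, and thereby transport the dynamic programming principle from \cite{djete2022aop} to one for $\overline U$, from which the viscosity property for \eqref{hjbz} follows. The paper stops at the DPP for $\overline U$ and declares the remaining verification ``straightforward,'' whereas you sketch the It\^o-formula step explicitly; this is a welcome elaboration but not a different method.
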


\begin{proof}
    By Corollary 3.11 of \cite{djete2022aop}, $U$ satisfies a dynamic programming principle: for each $(t_0,m_0) \in [0,T) \times \cP(\T^d)$, $0 < h < T- t_0$, we have 
    \begin{align} \label{dppu}
        U(t_0,m_0) = \inf_{\alpha \in \cA_{t_0,m_0}} \E\bigg[ \int_{t_0}^{t_0 + h} L\big(X^{t_0,\alpha}_t, \alpha_t, \sL^{0,m_0}(X^{t_0,\alpha}_t) \big) dt + U\big(t_0 + h, \sL^{0,m_0}(X^{t_0,\alpha}_{t_0 + h})\big)\bigg].
    \end{align}

    Next, for each $(t_0,m_0) \in [0,T], z_0 \in \T^d$ $\alpha \in \cA_{t_0,m_0}$, we define the processes $Z^{t_0,z_0}$ and $\ov{X}^{t_0,z_0,m_0}$ by 
    \begin{align*}
        Z^{t_0,z_0}_t &= z_0 + \sqrt{2a_0} (W_t^0 - W_{t_0}), \quad t_0 \leq t \leq T, \\
        \ov{X}^{t_0,z_0}_t &= X^{t_0,\alpha}_t - Z^{t_0,z_0}_t, \quad t_0 \leq t \leq T.
    \end{align*}
    We notice that because $Z^{t_0,z_0}$ is $\sF^0_T$-measurable, we have 
    \begin{align*}
        \sL^{0,m_0}(X_t^{t_0,\alpha}) = (\text{Id} + Z_t^{t_0,z_0} )_{\#} \sL^{0,m_0}(\ov{X}_t^{t_0,z_0,\alpha}).
    \end{align*}
    Using this observation, one easily checks that the function $\ov{U} : [0,T] \times \T^d \times \R^d \to \R$ defined by 
    \begin{align} \label{uovu}
        \ov{U}(t,z,m) = U(t,m^z)
    \end{align}
    in fact satisfies
    \begin{align*}
        \ov{U}(t_0,z_0,m_0) &= \inf_{\alpha \in \cA_{t_0,m_0}} \E^{\bP^{m_0}}\bigg[ \int_{t_0}^T L\Big(\ov{X}_t^{t_0,\alpha} + Z_t^{t_0,m_0}, \alpha_t, (\text{Id} + Z_t^{t_0,m_0})_{\#} \sL^{0,m_0}(X_t^{t_0,\alpha}) \Big) dt  \\
        &\hspace{6cm}+ G\Big((\text{Id} + Z_t^{t_0,m_0})_{\#} \sL^{0,m_0}(\ov{X}_T^{t_0,z_0,\alpha})\Big)\bigg]. 
    \end{align*}
    Moreover, from \eqref{dppu} and the identity \eqref{uovu}, we can deduce the following dynamic programming principle for $\ov{U}$:
    \begin{align} \label{dppovu}
        \ov{U}(t_0,z_0,m_0) &= \inf_{\alpha \in \cA_{t_0,m_0}} \E^{\bP^{m_0}}\bigg[ \int_{t_0}^{t_0 + h} L\Big(\ov{X}_t^{t_0,\alpha} + Z_t^{t_0,m_0}, \alpha_t, (\text{Id} + Z_t^{t_0,m_0})_{\#} \sL^{0,m_0}(\ov{X}_t^{t_0,z_0,\alpha}) \Big) dt \nonumber  \\
        &\hspace{6cm}+ \ov{U}\Big(t_0 + h, Z_t^{t_0,m_0}, \sL^{0,m_0}(\ov{X}_T^{t_0,z_0,\alpha})\Big)\bigg]. 
    \end{align}
    From here, it is straightforward to check that $\ov{U}$ is a viscosity solution of \eqref{hjbz}, since \eqref{hjbz} is nothing but the dynamic programming equation corresponding to the dynamic programming principle \eqref{dppovu}. Thus $U$ is in fact the unique viscosity solution to \eqref{hjb}, as claimed.
\end{proof}

\begin{cor}
    Under Assumption \ref{assump.control}, the value functions $V^N$ defined by \eqref{def.ndimvalues} converge to the function $U$ defined by \eqref{udefcontrol}, in the sense that 
    \begin{align*}
        \sup_{(t,\bx) \in [0,T] \times (\T^d)^N} |V^N(t,\bx) - U(t,m_{\bx}^N)| \xrightarrow{N \to \infty} 0.
    \end{align*}
\end{cor}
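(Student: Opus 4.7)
The plan is to deduce the corollary as an essentially direct application of Theorem \ref{thm.convergence}, once we verify that both $U$ and $V^N$ are viscosity solutions of \eqref{hjb} and \eqref{hjbn} respectively, with $H$ given by \eqref{hamdefcontrol}.

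First I would check that Assumption \ref{assump.control} forces Assumption \ref{assump.d1} for the pair $(H,G)$ built from \eqref{hamdefcontrol}: since $b$ and $L$ are Lipschitz in $(x,m)$ uniformly in $a \in A$, and $A$ is compact, a standard sup-of-affine-in-$p$ computation shows that $H$ is Lipschitz in $(x,m)$ with constants linear in $(1+|p|+|p'|)$ and $1$-Lipschitz in $p$ with constant $\sup_{a,x,m}|b(x,a,m)|$, giving the structural estimate \eqref{assump.H}; the hypothesis on $G$ is assumed directly. Thus the well-posedness and convergence framework of Theorems \ref{thm.comparison}, \ref{thm.convergence} applies.

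Next, Proposition \ref{prop.control} identifies $U$ of \eqref{udefcontrol} as the unique viscosity solution of \eqref{hjb} in the sense of Definition \ref{def.viscosity}. It remains to identify $V^N$ of \eqref{def.ndimvalues} as the unique viscosity solution of \eqref{hjbn}. This is a classical finite-dimensional verification: by the usual dynamic programming principle for controlled diffusions on $(\T^d)^N$, $V^N$ satisfies
\[
V^N(t_0,\bx_0) = \inf_{\bm\alpha} \E\bigg[ \int_{t_0}^{t_0+h} \frac{1}{N}\sum_{i=1}^N L(X^i_t,\alpha^i_t,m^N_{\bX_t}) dt + V^N(t_0+h,\bX_{t_0+h})\bigg],
\]
from which standard arguments (see, e.g., the framework of Fleming--Soner) show that $V^N$ is a continuous viscosity solution of \eqref{hjbn} with Hamiltonian \eqref{hamdefcontrol}; uniqueness at the finite-dimensional level follows from the classical comparison principle for second-order parabolic HJB equations on compact state space.

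Having identified $U$ and $V^N$ as the viscosity solutions of \eqref{hjb} and \eqref{hjbn}, Theorem \ref{thm.convergence} yields precisely the claim
\[
\sup_{(t,\bx) \in [0,T] \times (\T^d)^N} \bigl| V^N(t,\bx) - U(t, m_{\bx}^N) \bigr| \xrightarrow{N\to\infty} 0.
\]
The only nontrivial step is really the identification of $V^N$ as a viscosity solution of \eqref{hjbn}; the main obstacle, if any, is to make sure that the finite-dimensional verification is compatible with the (nonstandard) definition of viscosity solutions used here for \eqref{hjbn}. However, since the finite-dimensional equation \eqref{hjbn} is a genuine second-order parabolic PDE on $[0,T]\times(\T^d)^N$ with a Lipschitz continuous Hamiltonian and Lipschitz terminal data, this coincides with the classical definition and presents no real difficulty.
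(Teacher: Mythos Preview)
Your proposal is correct and follows essentially the same route as the paper, which does not even give an explicit proof of this corollary: the paper has already noted (just before Proposition~\ref{prop.control}) that under Assumption~\ref{assump.control} the function $V^N$ is the unique viscosity solution of \eqref{hjbn}, and Proposition~\ref{prop.control} identifies $U$ as the unique viscosity solution of \eqref{hjb}, so Theorem~\ref{thm.convergence} yields the result immediately. Your extra remarks (verifying Assumption~\ref{assump.d1} and noting that the finite-dimensional equation \eqref{hjbn} carries the classical notion of viscosity solution) are correct and simply make explicit what the paper leaves to the reader.
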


\subsection{A zero-sum game on the Wasserstein space} \label{subsec.zerosum}

Here we consider a zero-sum game, which is formulated as follows. First, the game will be set on a fixed filtered probability space $(\Omega, \sF, \bbF, \bP)$ satisfying the usual conditions and hosting a $d$-dimensional Brownian motion $W = (W_t)_{0 \leq t \leq T}$. We furthermore assume that $\sF_0$ is atomless, and $\bbF$ is the (augmentation of) the filtration generated by $\sF_0$ and $W$. For simplicity, we fix compact metric spaces $A^1$ and $A^2$. The data of the game consists of functions 
\begin{align*}
    (b, L)(x,a^,a^2,m) : \T^d \times A_1 \times A_2 \times \cP(\T^d) \to \R^d \times \R, \quad G : \cP(\T^d) \to \R.
\end{align*}
We make the following assumptions on the data. 
\begin{assumption} \label{assump.games}
   The functions $b$ and $L$ are continuous, and Lipschitz in $(x,m)$ uniformly in $a^1,a^2$. The function $G$ is Lipschitz.
\end{assumption}
For each $t_0 \in [0,T]$, $i = 1,2$ we denote by $\cA^i_{t_0}$ the set of all progressive processes $\alpha^i$ defined on $[t_0,T]$ and taking values in $A^i$. Informally, the game is played as follows. We fix $(t_0,m_0) \in [0,T] \times \cP(\T^d)$, and then choose a $\sF_0$-measurable random variable $\xi$ with $\xi \sim m_0$. The dynamics of the state process $X$ is given by
\begin{align} \label{xdefgame}
    dX_t = b\big(X_t, \alpha_t^1, \alpha^2_t, \sL(X_t) \big) dt + \sqrt{2}dW_t, \quad t_0 \leq t \leq T, \quad X_{t_0} = \xi,
\end{align}
and the payoff is determined by a functional 
\begin{align} \label{jdef}
J(t_0,\xi, \alpha^1, \alpha^2) = \E\bigg[ \int_{t_0}^T L\big(X_t, \alpha_t^1,\alpha_t^2, \sL(X_t) \big) dt + G(X_T)\bigg], \quad \text{where $X$ is given by \eqref{xdefgame}.}
\end{align}
Player I chooses $\alpha^1 \in \cA^1_{t_0}$ in order to maximize $J$, while Player II chooses $\alpha^2 \in \cA^2_{t_0}$ in order to minimize $J$. Of course, as is well known, there is some subtlety in formalizing this zero-sum game, which can be overcome by introducing the notion of strategies. We will denote by $\cA_{t_0}^{1,\text{s}}$ the set of strategies for player I, i.e. the set of all maps 
$$ \alpha^{1,s}[\alpha^2] : \mathcal{A}_{t_0}^2 \rightarrow \mathcal{A}_{t_0}^1 $$
which are non-anticipative, in the sense that 
\begin{align*}
    \bP\Big[\alpha^2_r = \ov{\alpha^{2}}_r \text{ for a.e. } r \in [t_0,t] \Big] = 1  \implies \bP \Big[\alpha^{1,\text{s}} \big[\alpha^2 \big]_r = \alpha^{\textcolor{red}{1,}\text{s}} \big[\ov{\alpha^{2}} \big]_r \text{ for a.e. } r \in [t_0,t] \Big] = 1.
\end{align*}

The set $\cA_{t_0}^{2,\text{s}}$ of strategies for player 2 is defined in a similar way. The lower value function $U^{-} : [t_0,T] \times \cP(\T^d) \to \R$ is defined by 
\begin{align} \label{uminusdef}
U^-(t_0,m_0) = \inf_{\alpha^\text{2,\text{s}} \in \cA^{2,\text{s}}_{t_0}} \sup_{\alpha^1 \in \cA^1_{t_0}} J\big(t,\xi, \alpha^1, \alpha^{2,\text{s}}\big[ \alpha^1 \big]\big), \quad \text{ where $\xi$ is $\sF_0$-measurable and } \xi \sim m_0
\end{align}
and the upper value functions is defined similarly by 
\begin{align} \label{uplusdef}
U^+(t_0,m_0) = \sup_{\alpha^{1,\text{s}} \in \cA^{1,\text{s}}_{t_0}} \inf_{\alpha^2 \in \cA^2_{t_0}} J\big(t,\xi, \alpha^{1,\text{s}}\big[\alpha^2 \big], \alpha^2 \big), \quad \text{ where $\xi$ is $\sF_0$-measurable and } \xi \sim m_0.
\end{align}
The fact that $U^+$ and $U^-$ are well-defined (i.e. the formulas in \eqref{uminusdef} and \eqref{uplusdef} are independent of the representative $\xi$) is established in \cite{cossophamjmpa}. It is also easy to check under Assumption \ref{assump.games} that $U^+$ and $U^-$ are both Lipschitz in $m$ (with respect to $\bd_1$) and 1/2-H\"older continuous in time, thanks to the uniform Lipschitz regularity of $L$ and $G$ in $(x,m)$. It is also shown in \cite[Corollary 4.1]{cossophamjmpa} that $U^+$ and $U^-$ satisfy the natural dynamic programming principles: 
\begin{align*}
    U^-(t_0,m_0) &= \inf_{\alpha^\text{2,\text{s}} \in \cA^{2,\text{s}}_{t_0}} \sup_{\alpha^1 \in \cA^1_{t_0}}  \bigg\{ \E\bigg[ \int_{t_0}^{t_0 + h} L\big(X_t, \alpha_t^1,\alpha^{2,\text{s}}\big[\alpha^1\big]_t, \sL(X_t) \big) dt\bigg] + U^-\big(t_0 + h, \sL(X_{t_0 + h}) \big) \bigg\}, \vspace{.1cm} \\
    &\text{ where $X$ is determined by \eqref{xdefgame} with $\beta = \beta^{\text{s}}\big[\alpha\big]$ and $\xi \sim m_0$ is $\sF_0$-measurable,}
\end{align*}
and similarly
\begin{align*}
    U^+(t_0,m_0) &= \sup_{\alpha^{1,\text{s}} \in \cA^{1,\text{s}}_{t_0}} \inf_{\alpha^2 \in \cA^2_{t_0}} \bigg\{ \E\bigg[ \int_{t_0}^{t_0 + h} L\big(X_t, \alpha^{1,\text{s}}\big[\alpha^2\big]_t,\alpha^2_t, \sL(X_t) \big) dt\bigg] + U^-\big(t_0 + h, \sL(X_{t_0 + h}) \big) \bigg\}, \vspace{.1cm} \\
    &\text{ where $X$ is determined by \eqref{xdefgame} with $\alpha = \alpha^{\text{s}}\big[\beta\big]$ and $\xi \sim m_0$ is $\sF_0$-measurable.}
\end{align*}
Using these dynamic programming principles, it is proved in \cite[Section 5]{cossophamjmpa}) that $U^+$ and $U^-$ are viscosity solutions of 
\begin{align} \label{hjbiminus}
\begin{cases}
    \ds - \partial_t U^- -\int_{\T^d} \tr(D_x D_m U^-\big)(t,m,x)  m(dx) \vspace{.1cm}  + \int_{\T^d} H^-\big(x, D_m U(t,m,x), m \big) m(dx) = 0,  \\
    \hspace{8cm} (t,m) \in [0,T] \times \cP(\T^d), \vspace{.1cm} \\
    U(T,m) = G(m), \quad m \in \cP(\T^d), 
    \end{cases}
\end{align}
and
\begin{align} \label{hjbiplus}
\begin{cases}
    \ds - \partial_t U^+ -\int_{\T^d} \tr(D_x D_m U^-\big)(t,m,x)  m(dx) 
 + \int_{\T^d} H^+\big(x, D_m U(t,m,x), m \big) m(dx) = 0, \vspace{.1cm} \\
     \hspace{8cm} (t,m) \in [0,T] \times \cP(\T^d), \vspace{.1cm} \\
    U(T,m) = G(m), \quad m \in \cP(\T^d), 
    \end{cases}
\end{align}
where 
\begin{align*}
    H^-(x,p,m) &= \min_{a \in A} \max_{b \in B} \Big\{- L(x,a,b,m) - h(x,a,b,m) \cdot p \Big\}, \vspace{.1cm} \\
    H^+(x,p,m) &= \max_{b \in A} \min_{a \in A} \Big\{- L(x,a,b,m) - h(x,a,b,m) \cdot p \Big\},
\end{align*}
at least in the sense of Remark \ref{rmk.nocommon}.

Next, we define a corresponding $N$-particle zero sum game. As in the previous subsection, for each $N$ we fix a filtered probability space $(\Omega^N, \sF^N, \bbF^N = (\sF^N_t)_{0 \leq t \leq T}, \bP^N)$ satisfying the usual conditions and hosting independent $d$-dimensional Brownian motions $W^0,W^1,...,W^N$. For $N \in \N$ and $t_0 \in [0,T]$, we denote by $\cA^{1,N}_{t_0}$ the set of $\bbF^N$-progressive processes $\bm \alpha^1 = (\alpha^{1,1},...,\alpha^{1,N})$ defined on $[t_0,T]$ and taking values in $(A^1)^N$, and we denote by $\cA^{2,N}$ the set of progressive processes $\bm \alpha^2 = (\alpha^{2,1},...,\alpha^{2,N})$ defined on $[t_0,T]$ and taking values in $(A^2)^N$. We will denote by $\cA_{t_0}^{1,\text{s},N}$ the set of all maps 
\begin{align*}
    \bm \alpha^{1,\text{s}}\big[\bm \alpha^2 \big] : \cA_{t_0}^{1,N} \to \cA_{t_0}^{2,N}
\end{align*}
which are non-anticipative, in the sense that 
\begin{align*}
    \bP\Big[\bm{\alpha}^{2}_t = \ov{\bm\alpha^2}_r \text{ for a.e. } r \in [t_0,t] \Big] = 1  \implies \bP \Big[\bm{\alpha}^{1,\text{s}} \big[\bm{\alpha}^{2}_t \big]_r = \bm\alpha^{1,\text{s}} \big[\ov{\bm\alpha^2}_r\big]_r \text{ for a.e. } r \in [t_0,t] \Big] = 1.
\end{align*}

The set $\cA_{t_0}^{2,\text{s},N}$ is defined in a similar way. The state process for our $N$-player game will be a $(\T^d)^N$-valued process $\bX = (X^1,...,X^N)$ determined by the dynamics 
\begin{align} \label{xdefngame}
    dX_t^i = b(X_t^i, \alpha_t^{1,i}, \alpha_t^{2,i}, m_{\bX_t}^N) dt + \sqrt{2} dW_t^i, \quad t_0 \leq t \leq T, \quad X_{t_0}^i = x^i.
\end{align}
In particular, we define the upper and lower $N$-particle value functions
\begin{align*}
    V^{N,+}, \,\, V^{N,-} : [t_0,T] \times (\T^d)^N \to \R, 
\end{align*}
by the formulas
\begin{align*}
    V^{N,-}(t,\bx) &= \inf_{\bm \alpha^{2,\text{s}} \in \cA^{2,s,N}_{t_0}} \sup_{\bm \alpha^1 \in \cA^{1,N}_{t_0}} J\big(t_0, \bx_0, \bm \alpha^1, \bm \alpha^{2,\text{s}}\big[ \bm \alpha^1 \big] \big), \\
    V^{N,+}(t,\bx) &= \sup_{\bm \alpha^{1,\text{s}} \in \cA^{1,s,N}_{t_0}} \inf_{\bm \alpha^2 \in \cA^{2,N}_{t_0}} J\big(t_0, \bx_0, \bm \alpha^{1,\text{s}}\big[ \bm \alpha^2 \big] , \bm \alpha^2 \big),
\end{align*}
where for any $(t_0, \bx_0) \in [0,T] \times (\T^d)^N$, $\bm \alpha \in \cA_{t_0}^N$, $\bm \beta \in \cB_{t_0}^N$, we have set
\begin{align*}
    J(t_0,\bx_0, \bm \alpha, \bm \beta) &= \E\bigg[\int_{t_0}^T \frac{1}{N}\sum_{i = 1}^N L\Big(X_t^i, \alpha_t^i, \beta^i_t, m_{\bX_t}^N \Big) dt + G(m_{\bX_T}^N) \bigg], \\
    &\qquad \text{where $X$ is given by \eqref{xdefngame}}.
\end{align*}
It is well-known, thanks to a string of results which can be traced back to \cite{flemingsoug1989}, that $V^{N,-}$ and $V^{N,+}$ are also characterized as the unique viscosity solutions of the PDEs
\begin{align} \label{hjbinminus} 
\begin{cases}
    \ds - \partial_t V^{N,-} - \sum_{i = 1}^N \Delta_{x^i} V^{N,-}  + \frac{1}{N} \sum_{i = 1}^N H(x^i, ND_{x^i} V^{N,-}, m_{\bx}^N) = 0, \vspace{.1cm} \\
    \hspace{8cm} (t,\bx) \in [0,T] \times (\T^d)^N, \vspace{.1cm} \\
    V^{N,-}(T,\bx) = G(m_{\bx}^N), \quad \bx \in (\T^d)^N.
    \end{cases}
\end{align}
and 
\begin{align} \label{hjbinplus} 
\begin{cases}
    \ds - \partial_t V^{N,+} - \sum_{i = 1}^N \Delta_{x^i} V^{N,+} + \frac{1}{N} \sum_{i = 1}^N H(x^i, ND_{x^i} V^{N,+}, m_{\bx}^N) = 0, \vspace{.1cm} \\
    \hspace{8cm} (t,\bx) \in [0,T] \times (\T^d)^N, \vspace{.1cm} \\
    V^{N,+}(T,\bx) = G(m_{\bx}^N), \quad \bx \in (\T^d)^N.
    \end{cases}
\end{align}
We thus arrive at the following consequences of our main results.
\begin{cor} \label{cor.convupperandlower}
    Suppose that Assumption \ref{assump.games} holds. Then we have 
    \begin{align*}
        &\sup_{(t,\bx) \in [0,T] \times (\T^d)^N} |V^{N,-}(t,\bx) - U^-(t,m_{\bx}^N)| \xrightarrow{N \to \infty} 0, \text{ and } \\
        &\sup_{(t,\bx) \in [0,T] \times (\T^d)^N} |V^{N,+}(t,\bx) - U^+(t,m_{\bx}^N)| \xrightarrow{N \to \infty} 0.
    \end{align*}
\end{cor}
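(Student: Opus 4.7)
The plan is to deduce both convergence statements by applying Theorem \ref{thm.convergence}, in its $a_0 = 0$ formulation from Remark \ref{rmk.nocommon}, to the pair $(V^{N,-}, U^-)$ with Hamiltonian $H^-$ and to the pair $(V^{N,+}, U^+)$ with $H^+$. Since the two arguments are entirely symmetric, I focus on the lower-value case. As a first step I would check that under Assumption \ref{assump.games} the pair $(H^-, G)$ satisfies Assumption \ref{assump.d1}: $G$ is Lipschitz by hypothesis, and the Hamiltonian $H^-$, as a $\min_a \max_b$ of a family of integrands that are Lipschitz in $(x,m)$ uniformly in $(a^1,a^2)$ and affine in $p$, satisfies
\[
|H^-(x,p,m) - H^-(x',p',m')| \le C(1+|p|+|p'|)\bigl(|x-x'| + |p-p'| + \bd_1(m,m')\bigr),
\]
i.e.\ the structural condition \eqref{assump.H}, by a routine estimate on suprema and infima of Lipschitz families.

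Next, I would record that $U^-$ is a viscosity solution of \eqref{hjbiminus} in the standard $C^{1,2}_{\mathrm p}$-test-function sense described in Remark \ref{rmk.nocommon}. This is essentially the content of \cite[Section 5]{cossophamjmpa}; alternatively, a self-contained derivation follows the pattern of the proof of Proposition \ref{prop.control}: start from the dynamic programming principle of \cite[Corollary 4.1]{cossophamjmpa}, touch $U^-$ from above or below at some $(t_0, m_0)$ with a smooth test function, insert a nearly optimal constant strategy on $[t_0, t_0+h]$, apply It\^o's formula to the resulting controlled McKean--Vlasov process, divide by $h$, and send $h \to 0$. The uniform $\bd_1$-Lipschitz continuity in $m$ and $1/2$-H\"older continuity in time of $U^-$ are immediate from Assumption \ref{assump.games} and the definition \eqref{uminusdef}, so $U^-$ is continuous, and the comparison principle (Theorem \ref{thm.comparison} combined with Remark \ref{rmk.nocommon}) identifies $U^-$ as the unique viscosity solution of \eqref{hjbiminus}.

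Finally, $V^{N,-}$ is the unique viscosity solution of the finite-dimensional Isaacs equation \eqref{hjbinminus} by the classical theory of Fleming and Souganidis \cite{flemingsoug1989}, so Theorem \ref{thm.convergence} directly yields
\[
\sup_{(t,\bx)\in[0,T]\times(\T^d)^N}\bigl|V^{N,-}(t,\bx) - U^-(t, m_{\bx}^N)\bigr| \xrightarrow{N\to\infty} 0,
\]
and the parallel argument with $H^+$ in place of $H^-$ delivers the convergence of $V^{N,+}$ to $U^+$.

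The main obstacle lies in the second step, specifically in matching the notion of viscosity solution used in \cite{cossophamjmpa} with the one adopted here. In the regime $a_0 = 0$ the change-of-variables device in Definition \ref{def.viscosity} is trivial and both notions collapse to the standard $C^{1,2}_{\mathrm p}$-test-function definition, so at a conceptual level there is no issue; in practice, however, one must verify that every viscosity solution in the sense of \cite{cossophamjmpa} is also one in the sense used here, since it is precisely this identification that allows our comparison principle to upgrade the characterization in \cite{cossophamjmpa} into the unique-viscosity-solution statement required by Theorem \ref{thm.convergence}.
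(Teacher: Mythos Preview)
Your proposal is correct and follows essentially the same approach as the paper: verify Assumption \ref{assump.d1} for $(H^\pm,G)$, invoke \cite[Theorem 5.1]{cossophamjmpa} to see that $U^\pm$ are viscosity solutions of \eqref{hjbiminus}/\eqref{hjbiplus} in the sense of Remark \ref{rmk.nocommon}, and then apply the convergence theorem. The compatibility concern you raise in the last paragraph is already handled in the paper's text preceding the corollary, where it is asserted that $U^\pm$ are viscosity solutions ``at least in the sense of Remark \ref{rmk.nocommon}'', so this is not an outstanding obstacle.
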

\begin{proof}
    This is a consequence of our convergence result, Theorem \ref{thm.comparison} (and Remark \ref{rmk.nocommon}), together with Theorem 5.1 in \cite{cossophamjmpa}.
\end{proof}
\begin{cor} \label{cor.existenceofvalue}
    Suppose that Assumption \ref{assump.games} holds, and that $H^+ = H^-$ (i.e. the Isaacs condition holds). Then $U^+ = U^-$. 
\end{cor}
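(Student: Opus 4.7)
The plan is to apply the comparison principle, Theorem \ref{thm.comparison}, in the form described by Remark \ref{rmk.nocommon}, since the game is posed without common noise ($a_0 = 0$). The key observation is that under the Isaacs condition $H^+ = H^-$, both $U^+$ and $U^-$ are viscosity solutions of the \emph{same} HJB equation on $\cP(\T^d)$. Thus, once uniqueness is known for this equation, the identity $U^+ = U^-$ is immediate.

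First I would verify that the common Hamiltonian $H := H^+ = H^-$ and the terminal cost $G$ satisfy Assumption \ref{assump.d1}. This is a routine check from Assumption \ref{assump.games}: since $A^1$ and $A^2$ are compact and $b$, $L$ are Lipschitz in $(x,m)$ uniformly in $(a^1,a^2)$, the estimate
\begin{align*}
|H(x,p,m) - H(x',p',m')| \leq C_H(1 + |p| + |p'|)\big(|x - x'| + |p - p'| + \bd_1(m,m')\big)
\end{align*}
follows from the standard stability of $\inf\sup$/$\sup\inf$ under Lipschitz perturbations, together with the bound $|b(x,a^1,a^2,m)| \leq C$. The Lipschitz hypothesis on $G$ is directly assumed.

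Next, recall from the discussion immediately preceding the corollary that $U^+$ and $U^-$ are both viscosity solutions of the HJB equation with Hamiltonian $H^+$, respectively $H^-$, in the sense of Remark \ref{rmk.nocommon} (this is exactly the content of \cite[Section 5]{cossophamjmpa}). Under the Isaacs condition these two equations coincide, so $U^+$ and $U^-$ are both viscosity solutions of the single equation \eqref{hjb} (with $a_0 = 0$) associated to the common Hamiltonian $H$. Applying Theorem \ref{thm.comparison}, together with Remark \ref{rmk.nocommon} which asserts that the comparison principle continues to hold in the more standard viscosity framework used here when $a_0 = 0$, we conclude that this equation admits at most one viscosity solution. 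Therefore $U^+ = U^-$.

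Since every ingredient invoked is already established earlier in the paper, there is no genuine obstacle to overcome here: the corollary is essentially a direct application of the uniqueness statement to the setting where the two Isaacs Hamiltonians coincide. The only point requiring any care is confirming that the notion of viscosity solution employed in \cite{cossophamjmpa} matches the one of Remark \ref{rmk.nocommon} (which is the standard finite-dimensional-style definition adapted to measures without the change of variables \eqref{def.ovu}), but this is exactly the notion for which comparison is stated in that remark.
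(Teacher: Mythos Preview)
Your proof is correct and follows exactly the same approach as the paper's: invoke Theorem~\ref{thm.comparison} (via Remark~\ref{rmk.nocommon}) together with the fact, from \cite[Section~5]{cossophamjmpa}, that $U^+$ and $U^-$ are viscosity solutions of the same equation under the Isaacs condition. The paper's proof is a single sentence to this effect, so your version simply spells out the routine verifications (Assumption~\ref{assump.d1} and compatibility of the viscosity notions) in more detail.
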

\begin{proof}
    This is a consequence of our main comparison result, Theorem \ref{thm.comparison}, (and Remark \ref{rmk.nocommon}), together with Theorem 5.1 in \cite{cossophamjmpa}.
\end{proof}

\begin{rmk} \label{rmk.cossophamlifting}
    In Remark 5.5 in of \cite{cossophamjmpa}, it is claimed that a uniqueness result can be obtained for the equations \eqref{hjbiplus} and \eqref{hjbiminus} by the ``lifting approach", but as mentioned in the introduction there are serious obstacles to using the lifting approach in the presence of idiosyncratic noise. Thus while we are relying completely on Theorem 5.1 of \cite{cossophamjmpa} to verify that the upper and lower value functions solve the upper and lower Hamilton-Jacobi-Isaacs equations \eqref{hjbiminus} and \eqref{hjbiplus}, our Corollary \ref{cor.existenceofvalue} establishes rigorously for the first time the existence of a value under the Isaacs condition.
\end{rmk}

\bibliographystyle{alpha}
\bibliography{HJBrefs}

\end{document}